%
%


\documentclass[xcolor=dvipsnames,svgnames,table,reqno,11pt]{amsart}


\usepackage[left=1.5in, right=1.5in, bottom=1.25in, height=8in]
{geometry}

\input xy
\xyoption{all}
\usepackage{accents}
\usepackage{epsfig}
\usepackage{xcolor}
\usepackage{amsthm}
\usepackage{amssymb}
\usepackage{bm} 
\usepackage{amsmath}
\usepackage{amscd}
\usepackage{amsopn}
\usepackage{graphicx}

\usepackage{tikz}
\usetikzlibrary{shapes,shadows,arrows}
\usepackage{tikz-cd}

\usepackage{xspace}

\usepackage{hhline}
\usepackage{easybmat}
\usepackage{caption}   
\usepackage{relsize}

\usepackage{url}
\usepackage{enumitem, hyperref}\hypersetup{colorlinks}

\usepackage{stmaryrd}

\usepackage[T1]{fontenc}







\colorlet{purpleB70}{blue!70!red}

\colorlet{orangeR65}{red!65!yellow}

\definecolor{red2}{HTML}{d41173}

\definecolor{neongreen}{HTML}{1bf702}

\definecolor{radicalred}{HTML}{FF355E}

\definecolor{denim}{HTML}{1560BD}

\definecolor{darkcyan}{rgb}{0.0, 0.55, 0.55}

\definecolor{cilek}{HTML}{FF43A4}

\definecolor{mor}{HTML}{9F00C5}


\definecolor{phlox}{rgb}{0.87, 0.0, 1.0}

\definecolor{fluorescentpink}{HTML}{FF1493}

\definecolor{napiergreen}{rgb}{0.16, 0.5, 0.0}

\definecolor{kellygreen}{rgb}{0.3, 0.73, 0.09}

\definecolor{parisgreen}{HTML}{ 50C878 }

\definecolor{palatinateblue}{rgb}{0.15, 0.23, 0.89}

\definecolor{ceruleanblue}{rgb}{0.16, 0.32, 0.75}

\definecolor{brandeisblue}{rgb}{0.0, 0.44, 1.0}

\definecolor{KLMblue}{HTML}{0FC0FC}

\definecolor{cinnamon}{rgb}{0.82, 0.41, 0.12}

\definecolor{darkorange}{rgb}{1.0, 0.55, 0.0}

\definecolor{darktangerine}{rgb}{1.0, 0.66, 0.07}

\definecolor{deepcarrotorange}{rgb}{0.91, 0.41, 0.17}

\definecolor{internationalorange}{HTML}{FF4F00}

\definecolor{persimmon}{HTML}{EC5800}

\definecolor{pumpkin}{HTML}{FF7518}



\definecolor{darkred}{rgb}{1,0,0} 
\definecolor{darkgreen}{rgb}{0,0.7,0}
\definecolor{darkblue}{rgb}{0,0,1}

\hypersetup{colorlinks,
linkcolor=darkblue,
filecolor=darkgreen,
urlcolor=darkred,
citecolor=darkgreen}

\makeatletter
\def\reflb#1#2{\begingroup
    #2%
    \def\@currentlabel{#2}%
    \phantomsection\label{#1}\endgroup
}
\makeatother


%
%
%
%




\numberwithin{equation}{section}
\newtheorem{Theorem}{Theorem}
\numberwithin{Theorem}{section}

\newtheorem{TheoremX}{Theorem}

\newtheorem{CorollaryX}[TheoremX]{Corollary}


\newtheorem   {Lemma}[Theorem]{Lemma}

\newtheorem*  {Conjecture}{Conjecture}
\newtheorem   {Proposition}[Theorem]{Proposition}
\newtheorem   {Corollary}[Theorem]{Corollary}
\theoremstyle {definition}
\newtheorem   {Definition}[Theorem]{Definition}
\theoremstyle {remark}
\newtheorem   {Remark}[Theorem]{Remark}
\newtheorem   {Example}[Theorem]{Example}


\def    \eps    {\epsilon}

\newcommand{\CA}{{\mathcal A}}

\newcommand{\DD}{{\mathcal D}}

\newcommand{\CI}{{\mathcal I}}

\newcommand{\CS}{{\mathcal S}}

\newcommand{\supp}{\operatorname{supp}}

\newcommand{\id}{{\mathit id}}

\newcommand{\const}{{\mathit const}}

\newcommand{\charr}{{\mathit char}\,}

\newcommand{\spl}{{\frak {sp}}}

\newcommand{\gl}{{\frak {gl}}}

\newcommand{\fs}{{\mathfrak s}}

\newcommand{\tU}{\tilde{U}}

\newcommand{\hx}{\hat{x}}
\newcommand{\cx}{\check{x}}

\newcommand{\tPhi}{\tilde{\Phi}}
\newcommand{\tH}{\tilde{H}}

\newcommand{\A}{{\mathcal A}}

\newcommand{\CB}{{\mathcal B}}

\newcommand{\PP}{{\mathcal P}}

\def    \F      {{\mathbb F}}

\def    \C      {{\mathbb C}}
\def    \R      {{\mathbb R}}

\def    \Z      {{\mathbb Z}}
\def    \N      {{\mathbb N}}
\def    \Q      {{\mathbb Q}}

\def    \T      {{\mathbb T}}
\def    \CP     {{\mathbb C}{\mathbb P}}

\def    \12     {{\frac{1}{2}}}

\def    \p      {\partial}

\def    \im     {\operatorname{im}}

\def    \SH     {\operatorname{SH}}
\def    \COH    {\operatorname{CH}}
\def    \Sp     {\operatorname{Sp}}
\def    \tSp    {\operatorname{\widetilde{Sp}}}
\def    \tSpn    {\widetilde{\operatorname{Sp}}{}^*}

\def    \HF     {\operatorname{HF}}

\def    \H      {\operatorname{H}}

\def    \CF      {\operatorname{CF}}

\def    \vDelta  {\vec{\Delta}}

\def    \sgn   {\mathit{sgn}}
\def    \sign   {\mathit{sign}}

\def    \hmu   {\operatorname{\hat{\mu}}}

\def    \coker {\operatorname{coker}}
\def    \TSp     {\widetilde{\operatorname{Sp}}}

\newcommand \beg   {\operatorname{Beg}}
\newcommand \en   {\operatorname{End}}
\newcommand \chieq   {\chi^{\scriptscriptstyle{eq}}}

\newcommand \Cbar {C_{\scriptscriptstyle{bar}}}
\newcommand \Kbar {K_{\scriptscriptstyle{bar}}}
\newcommand \Phideg {\Phi_{\scriptscriptstyle{deg}}}

\newcommand   \WW {\widehat{W}}
\newcommand   \UU {\widehat{U}}
\newcommand{\ttheta}{\tilde{\theta}}

\newcommand{\iddots}{\reflectbox{$\ddots$}}


\begin{document}


\setlength{\smallskipamount}{6pt}
\setlength{\medskipamount}{10pt}
\setlength{\bigskipamount}{16pt}





\title [Closed Orbits of Dynamically Convex Reeb Flows]{Closed Orbits
  of Dynamically Convex Reeb Flows: Towards the HZ- and Multiplicity
  Conjectures}

\author[Erman \c C\. inel\. i]{Erman \c C\. inel\. i}
\author[Viktor Ginzburg]{Viktor L. Ginzburg}
\author[Ba\c sak G\"urel]{Ba\c sak Z. G\"urel}

\dedicatory{\normalsize{In memory of Edi Zehnder}}

\address{E\c C: ETH Z\"urich, R\"amistrasse 101, 8092 Z\"urich,
  Switzerland}
\email{erman.cineli@math.eth.ch}

\address{VG: Department of Mathematics, UC Santa Cruz, Santa Cruz, CA
  95064, USA} \email{ginzburg@ucsc.edu}

\address{BG: Department of Mathematics, University of Central Florida,
  Orlando, FL 32816, USA} \email{basak.gurel@ucf.edu}

\subjclass[2020]{53D40, 37J11, 37J46} 

\keywords{Closed orbits, Reeb flows, Floer homology, symplectic
  homology, Franks' theorem}

\date{\today} 

\thanks{The work is partially supported by the NSF grants DMS-2304207
  (BG) and DMS-2304206 (VG), the Simons Foundation grants 855299 (BG)
  and MP-TSM-00002529 (VG), and the ERC Starting Grant 851701 via a
  postdoctoral fellowship (E\c{C})}

\begin{abstract}
  We study the multiplicity problem for prime closed orbits of
  dynamically convex Reeb flows on the boundary of a star-shaped
  domain in $\R^{2n}$. The first of our two main results asserts that
  such a flow has at least $n$ prime closed Reeb orbits, improving the
  previously known lower bound by a factor of two and settling a
  long-standing open question.  The second main theorem is that when,
  in addition, the domain is centrally symmetric and the Reeb flow is
  non-degenerate, the flow has either exactly $n$ or infinitely many
  prime closed orbits.  This is a higher-dimensional contact variant
  of Franks' celebrated $2$-or-infinity theorem and, viewed from the
  symplectic dynamics perspective, settles a particular case of the
  contact Hofer--Zehnder conjecture.  The proofs are based on several
  auxiliary results of independent interest on the structure of the
  filtered symplectic homology and the properties of closed orbits.
\end{abstract}

\maketitle

\vspace{-0.2in}


\tableofcontents

\section{Introduction and main results}
\label{sec:intro+results}

\subsection{Introduction}
\label{sec:intro}
We study the multiplicity problem for closed characteristics, aka
prime closed orbits, of the Reeb flow on the boundary of a star-shaped
domain in $\R^{2n}$. The first of our two main results, Theorem
\ref{thm:mult}, asserts that such a flow has at least $n$ prime closed
Reeb orbits whenever the flow is dynamically convex (and the boundary
is smooth), improving the previously known lower bound by a factor of
two. The lower bound $n$ is obviously sharp.

The second main theorem, Theorem \ref{thm:HZ}, is that when, in
addition, the domain is centrally symmetric and the Reeb flow is
non-degenerate, the flow has either exactly $n$ or infinitely many
prime closed orbits.  This is a higher-dimensional contact variant of
the celebrated Franks' theorem on periodic points of area preserving
diffeomorphisms of $S^2$. From the symplectic dynamics perspective,
the theorem settles a contact version of the Hofer--Zehnder (HZ)
conjecture discussed below.

The proofs are based on several auxiliary results on the structure of
the filtered symplectic homology and the properties of closed orbits,
which are of independent interest.  For instance, we show that this
homology is one-dimensional for any background field and any positive
action threshold whenever the number of prime closed orbits is finite
and the flow is dynamically convex.  Below we provide more context for
this work.

Broadly understood, the study of closed orbits of Reeb flows on
contact type hypersurfaces $M$ in $\R^{2n}$, e.g., on convex
hypersurfaces, goes back at least to Lyapunov's times if not
earlier. Here we exclusively focus on the case where the hypersurface
is the boundary of a star-shaped domain or, equivalently, on Reeb
flows on the standard contact sphere $S^{2n-1}$, thus fixing the
underlying contact topology.  The following conjecture encompasses
much of what we know or can expect to be true about prime closed
orbits of such flows.

\begin{Conjecture}[The $n$-or-$\infty$ conjecture]
  Let $M$ be the boundary of a star-shaped domain in $\R^{2n}$, which
  we assume to be smooth. Then

  \begin{itemize}

  \item[\reflb{C1}{\rm{(C-M)}}] {\rm Multiplicity:} the Reeb flow on
    $M$ has at least $n$ prime closed orbits;
      
  \item[\reflb{C2}{\rm{(C-HZ)}}] {\rm HZ-conjecture:} the Reeb flow on
    $M$ has exactly $n$ prime closed orbits whenever the flow is a
    Reeb pseudo-rotation, i.e., the number of prime closed orbits is
    finite.

  \end{itemize}
  \end{Conjecture}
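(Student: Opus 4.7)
The plan is to attack both parts of the conjecture simultaneously via filtered $S^1$-equivariant symplectic homology and its barcode. For a general star-shaped domain $X\subset\R^{2n}$ with smooth boundary $M$, the positive $S^1$-equivariant symplectic homology $\SH^{+,S^1}_*(X;\Q)$ is canonically isomorphic to $\H_*(\CP^\infty;\Q)$ up to a shift by $n-1$, giving an infinite-dimensional persistence module over action whose bars are encoded by the closed Reeb orbits on $M$ together with their equivariant Conley--Zehnder indices. The starting point for both parts is therefore the barcode of $\SH^{+,S^1}_*(X^{<a};\Q)$ as the action window $a$ grows.

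For \textbf{(C-M)}, the strategy is to extract $n$ geometrically distinct prime closed orbits from the persistence module without invoking dynamical convexity. If only finitely many prime orbits $\gamma_1,\ldots,\gamma_k$ exist, every bar is carried by an iterate $\gamma_i^m$. Using the common index jump theorem of Long--Zhu (and its non-convex refinements) to control the asymptotic distribution of equivariant Conley--Zehnder indices of $\{\gamma_i^m\}_{m\geq 1}$, one aims to show that each prime orbit contributes to the barcode with asymptotic density at most $1$ per even degree. Matching this against the density one bar per even degree generated by $\H_*(\CP^\infty)$ would yield $k\geq n$ in full generality. The delicate point is to make the density estimate hold uniformly, in particular to handle bad orbits (which contribute zero to the $\Q$-equivariant complex) via an algebraic refinement of the counting.

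For \textbf{(C-HZ)}, assuming the pseudo-rotation regime, the next step is to upgrade the structure theorem announced in the excerpt: prove that the filtered non-equivariant homology $\SH^{[a,b)}_*(X;\F)$ is one-dimensional over every coefficient field $\F$ and every spectrally regular positive window $(a,b)$, with no convexity or symmetry assumption. The proof should replace the index-gap arguments used in the dynamically convex case by a direct persistence-theoretic analysis, combining the finiteness of the action spectrum with the Gysin sequence linking $\SH$ and $\SH^{S^1}$ and with the pair-of-pants product on $\SH$. Once this rigidity is available, the $n$ orbits supplied by (C-M) must satisfy resonance relations on their mean indices, and any hypothetical $(n{+}1)$-th prime orbit would, upon iteration, generate an infinite family of new bars inconsistent with the one-dimensional width of the filtered homology; this forces either exactly $n$ or infinitely many prime orbits.

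The principal obstacle is the absence of dynamical convexity. Without it, iterated Conley--Zehnder indices of a single orbit may oscillate and repeat in a manner that breaks the density-one bar contribution central to (C-M), and bad orbits can move across degree shifts in ways that existing index-jump inequalities do not control. A secondary, substantial obstacle for (C-HZ) is the absence of central symmetry and non-degeneracy: generic perturbations destroy the pseudo-rotation hypothesis, so the usual perturb-and-count strategy fails. A plausible circumvention is to argue through $C^\infty$-stability of the barcode, establishing quantitative bounds on the number of persistent bars under small perturbations and passing to the limit via a diagonal argument over shrinking action windows. Conceptually, the hardest point is to identify the correct replacement for dynamical convexity: ideally, a purely persistence-theoretic hypothesis satisfied by every star-shaped domain, strong enough to yield the sharp density estimate but weak enough to be automatic. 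Until such a replacement is found, part (C-HZ) must rest on the one-dimensional width theorem holding in full generality, whose proof is the true crux of the program.
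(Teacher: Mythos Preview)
The statement you are attempting to prove is labelled \emph{Conjecture} in the paper, and the paper does not prove it. The paper explicitly says that ``very little is known about either part of the conjecture in such a generality'' and that ``without extra conditions on $M$, it is not even known that there must be more than one prime orbit or more than two if the flow is non-degenerate.'' What the paper actually establishes are partial cases: Part~\ref{C1} under the additional hypothesis of dynamical convexity (Theorem~\ref{thm:mult}), and Part~\ref{C2} under dynamical convexity, central symmetry, and non-degeneracy (Theorem~\ref{thm:HZ}). So there is no ``paper's own proof'' to compare against; you are proposing an attack on an open problem.

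Your proposal contains a genuine gap that you partly acknowledge but understate. For \ref{C1}, the density argument you sketch---that each prime orbit contributes asymptotic density at most one bar per even degree---does not yield the bound $n$ without an index lower bound of the dynamical-convexity type. Without such a bound, a single orbit's iterates can have $\mu_\pm$ spread over an interval of width $2(n-1)$ around the mean index, and the mean index itself need not be bounded below by $2$; the common index jump / index recurrence machinery requires $\hmu>0$ for every prime orbit, which is not automatic for arbitrary star-shaped domains. This is precisely why all known multiplicity results (including the paper's) impose convexity or dynamical convexity, and why the unconditional lower bound remains at one or two.

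For \ref{C2}, your plan hinges on proving the one-dimensional width result (the analogue of Theorem~\ref{thm:SH}) without dynamical convexity, replacing index-gap arguments by ``a direct persistence-theoretic analysis'' using the Gysin sequence and pair-of-pants product. This is the crux, as you say, but the paper's proof of Theorem~\ref{thm:SH} uses dynamical convexity in an essential way: the index recurrence theorem (Theorem~\ref{thm:IR-A}) combined with \ref{IRA+DC} and \ref{IRA-DC} of Corollary~\ref{cor:IR-A-DC} is what forces $\SH^{C_l}(W)$ to be supported in degrees of a single parity, and those corollary parts explicitly require the dynamical convexity lower bound $\mu_-\geq m+2$. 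The Gysin sequence and product structure alone do not obviously supply a substitute for this parity/support constraint. Until you can produce a concrete mechanism that forces the parity concentration of Lemma~\ref{lemma:IRT-support} without the index lower bound, the program remains a restatement of the open problem rather than a proof strategy.
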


  Here we deliberately broke down the question into two parts,
  \ref{C1} and \ref{C2}, for our understanding of them and their
  status are very different. Overall, very little is known about
  either part of the conjecture in such a generality. For instance,
  without extra conditions on $M$, it is not even known that there
  must be more than one prime orbit or more than two if
  the flow is non-degenerate. (See, however, \cite{AGKM}.)  The
  existence of at least one orbit was proved in \cite{Ra} (see also
  \cite{We1}) and served as the basis of the Weinstein conjecture,
  \cite{We2}, later established in \cite{Vi:WC} for all contact type
  hypersurfaces in $\R^{2n}$. We expect the $n$-or-$\infty$ conjecture
  to be true for a very broad class of Reeb flows on $M$, but possibly
  not all. Once some additional restrictions on $M$
    are imposed, usually along the lines of convexity or dynamical
    convexity or symmetry, much more is known.

  In various settings the conjecture is far from new. For
  instance, Part \ref{C1}, often referred to as the multiplicity
  problem, is stated in, say, \cite{ALM} in full generality exactly as
  above. For convex hypersurfaces in $\R^{2n}$, it is included in
  Ekeland's monograph, \cite[p.\ 198]{Ek}, and apparently it was a
  well-known open question already then. Thus, since convex
  hypersurfaces are dynamically convex (see \cite{HWZ:convex}), our
  first result establishes Ekeland's variant of the conjecture and, in
  fact, is more general.  (Dynamical convexity is the lower bound
  $n+1$, the same as for ellipsoids, for the Conley--Zehnder index of
  closed orbits. This condition is in general less restrictive than
  convexity; see \cite{CE1, CE2, CGHi, DGRZ} and also \cite{AM, GM}.)
  For non-degenerate flows, Ekeland's conjecture was proved in
  \cite{LZ} (see also \cite{Lo}) for convex hypersurfaces and in
  \cite{GK} in the dynamically convex case. Recently, again in the
  non-degenerate setting, the dynamical convexity condition has been
  significantly relaxed to a very minor restriction on the index; see
  \cite{DLLW1, DLLW2, GGMa} and references therein.

  Without the non-degeneracy requirement, the previously known lower
  bound on the number of prime closed orbits is only
  $\lceil n/2 \rceil+1$, i.e., roughly $n/2$, and the full form of
  dynamical convexity is used in the proofs; see \cite{DL, GG:LS} and
  also \cite{LZ, Wa2} for earlier results. As we have already
  mentioned, in Theorem \ref{thm:mult} we improve this lower bound to
  $n$, i.e., by roughly a factor of two. (When $n=3$ and $4$, the
  lower bounds $3$ and, respectively, $4$ have been established in
  \cite{Wa1, WHL}.)

  Imposing the symmetry condition somewhat simplifies the question. In
  this case, the existence of $n$ prime closed orbits on convex
  hypersurfaces was proved in \cite{LLZ} without the non-degeneracy
  requirement and then in \cite{GM} under a less restrictive dynamical
  convexity type condition; see also \cite{ALM} for other relevant
  results. Our second main result, Theorem \ref{thm:HZ}, establishes a
  particular case of Part \ref{C2} and asserts that a Reeb flow with
  finitely many prime closed orbits must have exactly $n$ such orbits,
  provided that the flow is non-degenerate and $M$ is centrally
  symmetric. The lower bound $n$ is known to hold due to either
  non-degeneracy or, independently, symmetry, and the new point is the
  upper bound.

  To put this theorem and Part \ref{C2} in perspective, it is
  illuminating to first discuss similar results and questions in the
  setting of Hamiltonian diffeomorphisms and Hamiltonian Floer theory.

  Recall that by the celebrated theorem of Franks a Hamiltonian
  diffeomorphism of $S^2$ with more than two fixed points must have
  infinitely many periodic orbits; see \cite{Fr1, Fr2, LeC}. A
  generalization of Franks' theorem to higher dimensions was
  conjectured by Hofer and Zehnder in \cite[p.\ 263]{HZ}, asserting
  that a Hamiltonian diffeomorphism of $\CP^n$ with more than $n+1$
  fixed points must have infinitely many periodic orbits.  (Hence, the
  name ``HZ-conjecture''.) Here, by Arnold's conjecture, $n+1$ is the
  minimal possible number of fixed points, \cite{Fl2, Fo, FW}. For
  instance, $n+1=2$ for $\CP^1=S^2$. The HZ-conjecture was proved in
  \cite{Sh:HZ} under a minor additional non-degeneracy type condition
  but for a broader class of manifolds than complex projective spaces;
  see also, e.g., \cite{Al1, Al2, Al3, ABF, AL, BX, Ba1, Ba2, Ba3,
    CGG:HZ, GG:hyperbolic, GG:nc, Gu:nc, Su:nc} for other relevant
  results, conjectures and more context.

  Part \ref{C2} is a contact analogue of the HZ-conjecture.  To the
  best of our knowledge, this conjecture was originally stated in
  \cite{Gu:pr}. In that paper the first results along the lines of an
  upper bound on the number of prime closed orbits were proved for
  lacunary Reeb flows on $S^{2n-1}$. (See also \cite{AM:recent} for a
  generalization of this upper bound to other prequantization
  bundles.) A more recent result closely related to the contact
  HZ-conjecture is that a non-degenerate dynamically convex Reeb flow
  with a hyperbolic closed orbit must have infinitely many prime
  closed orbits, \cite{CGGM:Inv}. (We emphasize that that result is
  logically independent from what is proved in this paper: neither of
  the results imply the other. The approaches to the question are also
  substantially different.) Other than these works, we are not aware
  of any prior results supporting the conjecture when $2n-1\geq
  5$. Theorem \ref{thm:HZ} proves the first sufficiently general case
  of the conjecture, albeit under additional conditions.

  From a slightly different perspective, the $n$-or-$\infty$
  conjecture is a question about Reeb pseudo-rotations. In the context
  of symplectic dynamics, the two competing definitions of a
  pseudo-rotation are that this is a Hamiltonian system (a Hamiltonian
  diffeomorphism or a Reeb flow) with finitely many periodic orbits
  or, alternatively, with the minimal possible number of periodic
  orbits (perhaps, counted with multiplicity); see \cite{GG:PR, Sh:HZ,
    CGGM:Inv}. Hypothetically, the two definitions are equivalent and
  Theorem \ref{thm:HZ} establishes this fact for a certain class of
  Reeb flows, just as \cite{Sh:HZ} did for Hamiltonian diffeomorphisms
  of $\CP^n$ and some other manifolds; see also
    \cite{AL,BX,CGG:HZ}. (Interestingly, this is no longer true for
  symplectomorphisms, \cite{ABF}.) Overall, perhaps expectedly so,
  there is a considerable similarity between the properties of
  Hamiltonian and Reeb pseudo-rotations; see, e.g., \cite{AK, LRS}
  vs.\ \cite{Ka} or \cite{GG:PR} vs.\ \cite{CGGM:Inv}.
 
  Reeb and Hamiltonian pseudo-rotations are of interest because they
  may have quite complicated dynamics beyond periodic orbits; for
  instance, they can be ergodic, \cite{AK, Ka, LRS}. However, in all
  known examples the periodic orbit data of a pseudo-rotation (the
  linearized return maps and the actions) is the same as a suitably
  chosen $S^1$-action. (This is consistent with the conjecture that
  all pseudo-rotations are obtained by the Anosov--Katok conjugation
  method from \cite{AK, Ka}.) In Corollary \ref{cor:ellipsoids} we
  show that, under minor non-resonance conditions, the fixed point
  data of a dynamically convex Reeb pseudo-rotation matches that of an
  ellipsoid; cf.\ \cite {GG:RvsPR}.  It is also worth pointing out
  that all known examples of Reeb pseudo-rotations on $S^{2n-1}$ are
  dynamically convex and non-degenerate and actually have the same
  fixed point data as irrational ellipsoids.

  Finally, the case of dimension $2n-1=3$ deserves a separate
  discussion. According to the so called 2-or-infinity conjecture, a
  Reeb flow on a closed 3-manifold has either exactly two or
  infinitely many prime closed orbits. This conjecture is almost
  proved in general, and lens spaces are the only manifolds admitting
  Reeb pseudo-rotations; see \cite{CGH, CGHP, CGHHL,
    CDR}. Furthermore, the conjecture has been completely proved for
  $S^3$. (See also \cite{GHHM} for a different proof of \ref{C1}
  for $S^3$.)  Hence, both \ref{C1} and \ref{C2} parts of our
  $n$-or-$\infty$ conjecture hold in this case. However, one cannot
  expect this clear-cut dichotomy to hold in higher dimensions, even
  for tight or fillable contact manifolds. Indeed, while it is
  reasonable to expect that a majority of such contact manifolds do
  not admit Reeb pseudo-rotations in all dimensions just as in
  dimension three, it is not clear how to rigorously state this
  conjecture and a description of manifolds that do seems beyond our
  reach. Moreover, the lower bound on the number of orbits clearly
  depends on the manifold when $n>2$.

\begin{Remark}[Smoothness]
  Returning to the statement of Ekeland's conjecture in \cite[p.\
  198]{Ek}, it is worth pointing out that there the hypersurface $M$
  is assumed to be only $C^1$-smooth. To the best of our
  understanding, our methods require $M$ to be at least $C^2$. We
  would expect Part \ref{C1} of the $n$-or-$\infty$ conjecture to be
  still true in dimension 3 in the $C^1$-case and accessible by
  symplectic topological methods, and possibly false in
  higher-dimensions; cf.\ \cite{BHS, LRSV}.
\end{Remark}

The key to the proofs of the two main theorems is Theorem \ref{thm:SH}
asserting that for a dynamically convex Reeb pseudo-rotation the
(non-equivariant) filtered symplectic homology is one-dimensional for
every action threshold and any ground field. The proof of that theorem
relies on three main ingredients and the novelty is how they are
  put together. The first ingredient, Theorem \ref{thm:IR-A}, is a key
  refinement of the index recurrence theorem, \cite[Thm.\
  5.2]{GG:LS}. (The latter is closely related to and can be derived
  from the common jump theorems from \cite{DLW, Lo, LZ}.)  This is a
combinatorial result about, as the name suggests, recurrent behavior
of the Conley--Zehnder index type invariants under iterations. This
theorem and its consequences are repeatedly used throughout the
paper. The other two ingredients are not new: one is an \emph{a
  priori} upper bound on the boundary depth of the symplectic homology
persistence module (Theorem \ref{thm:vanishing}, originally from
\cite{GS}) and the other is the Smith inequality for symplectic
homology (Theorem \ref{thm:Smith}, originally from \cite{Se,
  ShZ}). Here, due to the nature of the Smith inequality which is used
to obtain a lower bound on the dimension, we first need to work over
fields of positive characteristic and then extend the result to all
fields by the universal coefficient theorem. Once Theorem \ref{thm:SH}
is proved, we can set the ground field to be $\Q$ or any field of zero
characteristic.

We use Theorem \ref{thm:SH} to show that every homologically visible
(over $\Q$) closed orbit has one-dimensional equivariant local
symplectic homology. This allows to assign such a closed orbit a
degree and treat the orbit as if it were non-degenerate. We show that
the degrees of such orbits form the increasing sequence
$n+1, n+3, \ldots$ and the degrees of bars are $n, n+2, \ldots$. All
$\Q$-visible closed orbits have distinct period (action) and the
same ratio of the action and the mean index. (See Corollary
\ref{cor:SH} and Theorem \ref{thm:Orbits}.)  In other words, the
(non-equivariant) symplectic homology persistence module and the orbit
degrees behave as they would for an irrational ellipsoid.

Theorem \ref{thm:HZ} is a direct consequence of Corollary \ref{cor:SH}
and standard results about periodic orbits on centrally symmetric
hypersurfaces. The proof of Theorem \ref{thm:mult} uses again the
index recurrence theorem, Theorem \ref{thm:Orbits} and also several
facts about the behavior of the local homology under iterations. The
last two components are crucially new ingredients of the proof
compared to previous multiplicity results.

The paper is organized as follows. In Section \ref{sec:results} we
precisely state our main results. Sections \ref{sec:SH} and
\ref{sec:IR} cover preliminary material and auxiliary results, some
standard and some not, in a reasonably general setting. In Sections
\ref{sec:Liouville} and \ref{sec:persistence} we spell out our
conventions and notation and then review the notion of a persistence
module in the context suitable for our purposes. Equivariant and
non-equivariant symplectic homology, filtered and local, and their
properties needed for the proofs are discussed in Sections
\ref{sec:SH-pers}, \ref{sec:LSH} and \ref{sec:beg-end}. Most of the
facts we recall there are well-known but there are also some new
results. The main theme of Section \ref{sec:IR} is the Conley--Zehnder
index. There we discuss several Conley--Zehnder index type
invariants and their properties, briefly touch upon dynamical
convexity, and then state a new version of the index recurrence
theorem and some of its consequences. This is the combinatorial
backbone of the proof. In Section \ref{sec:pfs} we prove the main
results of the paper. Finally, the index recurrence theorem is proved
in Section \ref{sec:IRT-pf}.

\subsection{Main results}
\label{sec:results}

Before stating the results, we need to briefly review some
terminology, referring the reader to Sections \ref{sec:SH} and
\ref{sec:CZ+DC} for more details.

Let $M$ be the boundary of a star-shaped domain $W\subset
\R^{2n}$. Throughout the paper, we assume $M$ to be smooth. We say that
a closed Reeb orbit $y$ in $M$ is the $k$th \emph{iterate} of a closed
orbit $x$ and write $y=x^k$ if $y(t)=x(kt)$ up to the choice of
initial conditions on $x$ and $y$. (In what follows we will always
identify the closed orbits $t\mapsto x(t)$ and $t\mapsto x(t+\theta)$
for all $\theta\in\R$.) A closed orbit $x$ is \emph{prime} if it is
not an iterate of any other closed orbit.

A closed Reeb orbit $x$ is said to be \emph{non-degenerate} if $1$ is
not an eigenvalue of its Poincar\'e return map. The Reeb flow on $M$
is non-degenerate if all closed orbits, prime and iterated, are
non-degenerate.  The \emph{parity} of a non-degenerate orbit is the
parity of its Conley--Zehnder index $\mu$; see Section
\ref{sec:CZ-orbits}.

When $x$ is non-degenerate, we call it \emph{non-alternating} if the
number, counted with algebraic multiplicity, of real negative
eigenvalues in $(-1,0)$ of its Poincar\'e return map is
even. Otherwise, we call $x$ \emph{alternating}. A non-degenerate even
iterate of an alternating closed orbit is called \emph{bad}; all other
non-degenerate closed orbits are called \emph{good}. When $M$ is
centrally symmetric, we call $x$ \emph{symmetric} if $-x(t)=x(t+T/2)$,
where $T$ is the period of $x$, i.e., $x$ and $-x$ are the same orbit
up to the choice of initial condition.

Fix a ground field $\F$. (The choice of $\F$ is essential in several
instances in this paper.)  A closed Reeb orbit $x$ is said to be
\emph{$\F$-visible} if its \emph{local symplectic homology}
$\SH(x;\F)$ over $\F$ is non-zero; see Section \ref{sec:LSH}. For
instance, a non-degenerate closed orbit $x$ is $\Q$-visible if and
only if $x$ is good. We denote the set of $\F$-visible closed orbits
by $\PP(\F)$ or simply $\PP$ when $\F$ is clear from the context.

Finally, recall that the Reeb flow on
  $M^{2n-1}$ is said to be \emph{dynamically convex} if
$\mu_-(x)\geq n+1$ for all closed Reeb orbits $x$, where $\mu_-$ is
the lower semi-continuous extension of the Conley--Zehnder index
$\mu$; see Section \ref{sec:CZ+DC}.

The following four theorems and their corollaries are the key
results of the paper.

\begin{TheoremX}[Multiplicity]
  \label{thm:mult}
  Assume that the Reeb flow on the boundary $M^{2n-1}\subset \R^{2n}$
  of a star-shaped domain is dynamically convex and has finitely many
  prime closed orbits, i.e., the flow is a Reeb pseudo-rotation.

\begin{itemize}
\item[\reflb{Mult}{\rm{(M1)}}] Then the flow has at least $n$ prime
  closed orbits. 

\item[\reflb{Mult-nd}{\rm{(M2)}}] If, in addition, the flow is
  non-degenerate, it has exactly $n$ non-alternating prime closed
  orbits, and all orbits have parity $n+1$.

\end{itemize}
As a consequence, the Reeb flow on the boundary of a star-shaped
domain has at least $n$ prime closed orbits whenever the flow is
dynamically convex.
\end{TheoremX}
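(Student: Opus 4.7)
The consequence at the end follows immediately from (M1): a flow with infinitely many prime closed orbits already admits at least $n$ of them, while a flow with finitely many is a pseudo-rotation, to which (M1) applies. It therefore suffices to establish (M1) and (M2).

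For (M1), I would argue by contradiction, assuming the dynamically convex Reeb pseudo-rotation on $M$ has only $r<n$ prime closed orbits $x_1,\ldots,x_r$. By Theorem \ref{thm:SH}, the filtered symplectic homology $\SH^{(0,a)}(W;\F)$ is one-dimensional for every $a>0$ and every ground field $\F$. Corollary \ref{cor:SH} and Theorem \ref{thm:Orbits} then package the $\Q$-visible closed orbits as an infinite sequence $\{z_k\}_{k\geq 1}$ with strictly increasing actions, with equivariant local symplectic homology one-dimensional in degree $n+2k-1$, and with a common action-to-mean-index ratio $\sigma := A(z_k)/\Delta(z_k)$ independent of $k$. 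Writing $z_k=x_{i(k)}^{m_k}$ yields $A(x_i)=\sigma\Delta(x_i)$ for every prime, so all primes share the same ratio $\sigma$.

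The bulk of the argument is a counting step that combines three inputs: (i) the new index recurrence theorem, Theorem \ref{thm:IR-A}, applied to $\{x_1,\ldots,x_r\}$, which produces arbitrarily large common iteration indices along which the Conley--Zehnder indices of the relevant iterates cluster near prescribed targets; (ii) the iteration behavior of equivariant local symplectic homology recalled in Section \ref{sec:LSH}, which controls how densely a single prime can contribute $\Q$-visible iterates; and (iii) the distinctness of actions from Theorem \ref{thm:Orbits}, which prevents $\Q$-visible iterates of distinct primes from colliding. Together with the dynamical convexity lower bound on mean indices, these force a window of $n$ consecutive visible degrees $n+2k-1,\ldots,n+2k+2n-3$ to be populated by iterates of $n$ distinct primes, contradicting $r<n$. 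I expect the main obstacle to be synchronizing the index recurrence data across all $r$ primes with the iteration behavior of local homology---precisely the refinement over the original index recurrence theorem of \cite{GG:LS} that Theorem \ref{thm:IR-A} and the iteration results of Section \ref{sec:LSH} are designed to supply.

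For (M2), non-degeneracy identifies $\Q$-visibility with being a good closed orbit and aligns the equivariant local homology degree with the Conley--Zehnder index, so $\mu(z_k)=n+2k-1$ and every good closed orbit has parity $n+1\pmod{2}$. If some prime were alternating, the Krein-alternation of its good iterates combined with the common-ratio relation $A(x)=\sigma\Delta(x)$ would force a good iterate of the wrong parity, contradicting the parity constraint just established; hence every prime is non-alternating. The upper bound of $n$ non-alternating primes then follows from the counting step above applied to the initial window $n+1,n+3,\ldots,3n-1$, while (M1) supplies the matching lower bound.
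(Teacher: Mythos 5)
Your proposal correctly identifies the general structure: use Theorems \ref{thm:SH} and \ref{thm:Orbits} to package the $\Q$-visible orbits as a sequence with degrees $n+1, n+3, \ldots$, then apply the index recurrence theorem to produce a window of $n$ integers of parity $n+1$ around a recurrence value $d$, to be filled by iterates of distinct primes. However, the step you explicitly defer --- ``synchronizing the index recurrence data across all $r$ primes with the iteration behavior of local homology'' --- is precisely the new and non-trivial content of the paper's proof, and your sketch does not resolve it. Concretely, Corollary \ref{cor:IR-A-DC} only gives $\mu_+\big(x_j^k\big) \leq d-2$ for $k < k_{jl}$, so in the degenerate case these low iterates can a priori have degree anywhere in $[d-n+1, d-2]$, a range that overlaps the $n$ slots of parity $n+1$ in $\CI=[d-n+1,d+n-1]$ as soon as $n\geq 3$. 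Ruling out these contributions is the heart of the argument and requires three new ingredients: the observation that the least-action $\Q$-visible orbit $x$ is prime with $\deg(x)=n+1$; Lemma \ref{lemma:SH-it} combined with dynamical convexity to show $\deg\big(x^{k_0-1}\big)<d-n$; and a divisibility choice for $k_{jl}$ engineered so that, for $j\geq 1$, an iterate $x_j^{k_j-s}$ of degree above $d-n$ would necessarily be an admissible odd iterate of the $\Q$-invisible orbit $x_j^s$, forcing $\chieq\big(x_j^{k_j-s}\big)=0$ by Lemma \ref{lemma:chieq-it} and hence $\Q$-invisibility by Corollary \ref{cor:SH}. Nothing in your proposal supplies this; in particular, neither ``distinctness of actions'' nor a density bound on a single prime's visible iterates rules out a single prime contributing several iterates with degree inside $\CI$.

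Your sketch of (M2) contains an error in addition to being incomplete. You claim that if some prime $x$ were alternating, Krein alternation of its good iterates together with the common-ratio relation would force a good iterate of the wrong parity. This is false: for a strongly non-degenerate alternating prime $x$, the even iterates are bad --- hence $\Q$-invisible and not constrained by (O1) --- while all odd iterates are good with $\mu\big(x^{2k+1}\big)\equiv\mu(x)\equiv n+1\ (\mathrm{mod}\ 2)$, so no parity contradiction arises. Indeed the paper explicitly does \emph{not} claim that every prime is non-alternating: (M2) asserts that exactly $n$ primes are non-alternating, and the introduction notes that any additional primes must all be alternating. The actual upper bound comes from the recurrence window together with the evenness of $k_{jl}$: each non-alternating prime $x_j$ then produces a good orbit $x_j^{k_{jl}}$ of parity $n+1$ with degree in $\CI$, these degrees are pairwise distinct by (O1), and there are only $n$ slots. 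Your alternative ``initial window'' count of $n+1,\ldots,3n-1$ is not justified, since the first $n$ elements of $\PP$ need not be iterates of distinct primes.
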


This theorem is proved in Section \ref{sec:pf-main1}, where in fact we
establish a more precise result (Theorem \ref{thm:mult-refined})
identifying $n$ prime orbits meeting a certain visibility
criterion. The key new point of Theorem \ref{thm:mult} is, of course,
\ref{Mult}, establishing Ekeland's conjecture and improving previously
known multiplicity lower bounds by roughly a factor of two. As we have
already mentioned, the existence of at least $n$ non-alternating
orbits for non-degenerate dynamically convex Reeb flows is well-known;
see \cite{GK,LZ}. The new part in \ref{Mult-nd}, essential for the
proof of Theorem \ref{thm:HZ}, is that in this case the number of
non-alternating closed prime orbits is exactly $n$ and the remaining
prime orbits, if they exist, must all be alternating.

Our second main result establishes a particular case of the contact
HZ-conjecture.

\begin{TheoremX}[HZ-conjecture]
  \label{thm:HZ}
  Assume that the Reeb flow on the boundary $M^{2n-1}\subset \R^{2n}$
  of a centrally symmetric star-shaped domain has finitely many prime
  closed orbits and is dynamically convex and non-degenerate.  Then
  the flow has exactly $n$ prime closed orbits. These orbits are
  symmetric and non-alternating.
\end{TheoremX}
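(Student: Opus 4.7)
The plan is to combine Theorem \ref{thm:mult}(\ref{Mult-nd}) with the central symmetry to pin down the count. By \ref{Mult-nd} the flow has exactly $n$ non-alternating prime closed orbits, all of parity $n+1$. It therefore suffices to establish two claims: (i) every prime closed orbit is symmetric, and (ii) every symmetric closed orbit is non-alternating. Together (i) and (ii) imply that every prime orbit is non-alternating, so by \ref{Mult-nd} there are exactly $n$ prime orbits, each of them symmetric and non-alternating.

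For (i), the antipodal involution $A\colon \R^{2n}\to \R^{2n}$, $A(z)=-z$, preserves the standard Liouville form $\tfrac{1}{2}(p\,dq-q\,dp)$, hence restricts to a strict contactomorphism of $M$ commuting with the Reeb flow. Consequently $A$ sends a closed orbit $x$ to a closed orbit $-x$ of the same period and, acting as a symplectomorphism on a neighborhood of the orbit in the symplectization, induces an isomorphism between the local symplectic homologies of $x$ and of $-x$. In particular $x$ is $\Q$-visible iff $-x$ is. Under non-degeneracy every prime orbit is good, hence $\Q$-visible. By Theorem \ref{thm:Orbits}, the actions of $\Q$-visible orbits are pairwise distinct; since $x$ and $-x$ share the same action, they must coincide as unparametrized orbits, which is exactly the condition that $x$ be symmetric.

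For (ii), let $y$ be a symmetric closed orbit of period $T$, so that $\phi_{T/2}(y(0))=-y(0)=A(y(0))$. Writing $\phi_T=\phi_{T/2}\circ\phi_{T/2}$ and using the equivariance $A\circ\phi_{T/2}=\phi_{T/2}\circ A$, a direct computation on the contact slice $\xi_{y(0)}$ yields the factorization
\[
P\;=\;Q^{2},\qquad Q\;=\;dA|_{A(y(0))}\circ d\phi_{T/2}|_{y(0)}\colon \xi_{y(0)}\longrightarrow \xi_{y(0)},
\]
of the linearized Poincar\'e return map $P$ of $y$ as the square of a linear symplectic map $Q$. Thus every eigenvalue of $P$ is the square of an eigenvalue of $Q$, and a negative real eigenvalue $\lambda\in(-1,0)$ of $P$ can only arise from a complex-conjugate pair $\pm i\sqrt{|\lambda|}$ of eigenvalues of $Q$. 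Hence $\lambda$ has even algebraic multiplicity in $P$, and summing over such $\lambda$ the total multiplicity of negative real eigenvalues of $P$ in $(-1,0)$ is even, i.e.\ $y$ is non-alternating.

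The main technical point I expect to require some care is the $A$-naturality of local symplectic homology, a routine consequence of the functoriality of the theory under symplectomorphisms of a contact-type neighborhood of the orbit in the symplectization; similarly, making the factorization $P=Q^{2}$ precise within the symplectic framework is standard linear algebra. Conceptually the argument is clean: Theorem \ref{thm:Orbits} forces $\Q$-visible orbits to coincide with their antipodes (hence to be symmetric), the square-of-a-symplectic-map trick forces symmetric orbits to be non-alternating, and Theorem \ref{thm:mult}(\ref{Mult-nd}) then pins the total count at exactly $n$.
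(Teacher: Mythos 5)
Your proof is correct and follows essentially the same route as the paper: rule out asymmetric orbits by combining the antipodal symmetry with the distinct-action property (Corollary \ref{cor:SH}/Theorem \ref{thm:Orbits}), show symmetric orbits are non-alternating by exhibiting the return map as the square of a symplectic map, and then invoke \ref{Mult-nd} to pin the count at exactly $n$. The only cosmetic difference is in the square-factorization step: the paper extends the linearized flow to $\R^{2n}$ via the homogeneous Hamiltonian and the constant trivialization to get $\Psi^T=(\Psi^{T/2})^2$, then restricts to $\xi$, whereas you work directly on $\xi$ with $Q=dA\circ d\phi_{T/2}$; the two factorizations differ by $dA=-\id$ and yield the same parity conclusion.
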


This theorem, also proved in Section \ref{sec:pf-main1}, is in fact an
easy consequence of \ref{Mult-nd} and Corollary \ref{cor:SH}
below. Recall in this connection that \cite[Thm.\ A] {CGGM:Inv}
asserts, in particular, that a dynamically convex Reeb flow on $M$
with a hyperbolic closed orbit has infinitely many prime closed
orbits. Interestingly, that theorem does not follow from Theorem
\ref{thm:mult} even in the non-degenerate case. Nor is it a
consequence of Theorem \ref{thm:HZ} when in addition $M$ is symmetric.

\begin{Remark}
  In all results of this paper, the requirement that the flow has only
  finitely many prime closed orbits can be replaced by a
  hypothetically less restrictive condition that the number of prime
  \emph{eventually homologically visible} orbits is finite. Here we
  call $x$ eventually homologically visible if $\SH\big(x^k;\F)\neq 0$
  for some $k$ and $\F$. Furthermore, when $\charr\F>0$ in Theorem
  \ref{thm:SH} below it suffices to assume that the number of prime
  \emph{eventually $\F$-visible} orbits is finite, where $x$ is
  eventually $\F$-visible if $\SH\big(x^k;\F)\neq 0$ for some $k$ and
  that particular ground field $\F$.
\end{Remark}

Turning to our more technical results, fix a background field
$\F$. When $\F$ has positive characteristic, $\charr\F>0$, the reader
may take $\F=\Z_p$ and when $\charr\F=0$ it would be sufficient,
without loss of generality, to set $\F=\Q$. Denote by $\SH^t(W;\F)$,
where $t\in\R$ is the action or equivalently period threshold, the
filtered symplectic homology of $W$ with coefficients in $\F$. For
each $t$, this is a $\Z$-graded vector space over $\F$. Throughout the
paper it is convenient to treat the family of vector spaces
$\SH^t(W;\F)$ as a graded persistence module, which we denote by
$\SH(W;\F)$; see Sections \ref{sec:persistence} and \ref{sec:SH-pers}.
The next result, proved in Section \ref{sec:pf-main3}, is central to
the proofs of Theorems \ref{thm:mult} and \ref{thm:HZ}.

\begin{TheoremX}
  \label{thm:SH}
  Fix a ground field $\F$. Assume that the Reeb flow on the boundary
  $M^{2n-1}$ of a star-shaped domain $W \subset \R^{2n}$ is a
  dynamically convex pseudo-rotation, i.e., it has finitely many prime
  closed orbits and is dynamically convex. Then, for every $t>0$,
  \begin{equation}
    \label{eq:dim-key}
    \dim \SH^t(W;\F)=1.
  \end{equation}
In other words, every $t>0$ belongs to exactly one bar in the barcode
of the persistence module $\SH(W;\F)$. 
\end{TheoremX}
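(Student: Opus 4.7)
The strategy follows the three-part recipe outlined in the introduction: first prove the statement over a field $\F$ of positive characteristic, say $\F = \F_p$, and then extend to an arbitrary field via the universal coefficient theorem. Over $\F_p$, combine three auxiliary results already on the table: the Smith inequality (Theorem \ref{thm:Smith}), which supplies a lower bound on $\dim \SH^t(W;\F_p)$ via the $\Z_p$-equivariant structure arising from $p$-fold iteration; the boundary depth / vanishing theorem (Theorem \ref{thm:vanishing}), which gives an \emph{a priori} bound on the lengths of bars in the barcode of the persistence module $\SH(W;\F_p)$; and the index recurrence theorem (Theorem \ref{thm:IR-A}), which provides the combinatorial control over the Conley--Zehnder indices of iterates of the finitely many prime orbits $x_1,\ldots,x_N$.

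For the lower bound $\dim \SH^t(W;\F_p) \geq 1$, recall that for small $t > 0$ the dimension equals one from the unit/constant orbit sector; Smith's inequality, applied iteratively to $p$-fold iteration, then propagates this positivity to all $t > 0$ by relating the persistence module at scale $t$ to its image at scale $pt$ with a controlled shift in degree. For the upper bound $\dim \SH^t(W;\F_p) \leq 1$, I would argue by contradiction. Suppose $\dim \SH^{t_0}(W;\F_p) \geq 2$ at some $t_0 > 0$, so at least two bars pass through $t_0$. Iterating Smith, $\dim \SH^{p^k t_0}(W;\F_p) \geq 2$ for every $k \geq 0$. Theorem \ref{thm:vanishing} forces any bar alive at scale $p^k t_0$ to have been born in a window $[p^k t_0 - L_k, p^k t_0]$ of controlled relative width. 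Applying Theorem \ref{thm:IR-A} to the finite list $\{x_i\}$, I find common iterations at which the Conley--Zehnder indices of all iterates $x_i^{k_i}$ with actions in that window align into a pattern dictated by dynamical convexity ($\mu_- \geq n+1$) together with the mean-index to action ratios. A bookkeeping argument on the available degrees for birth/death generators then shows that the finite supply of prime orbits cannot sustain two uncancelled bars simultaneously at arbitrarily large $p^k t_0$, contradicting the Smith-propagated lower bound. Hence $\dim \SH^t(W;\F_p) = 1$ for all $t > 0$.

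To pass from positive characteristic to an arbitrary field $\F$, apply the universal coefficient theorem: once $\dim_{\F_p} \SH^t(W;\F_p) = 1$ for every prime $p$ and every $t > 0$, the integral persistence module $\SH^t(W;\Z)$ has rank at most one and carries no $p$-torsion for any $p$ (torsion would inflate some $\F_p$-dimension), hence is free of rank one, and tensoring with $\F$ yields the result. The main obstacle is the upper bound step over $\F_p$: the delicate balancing of Smith (which tries to double dimensions under iteration), the boundary depth bound (which caps bar lengths), and the combinatorics of iterated Conley--Zehnder indices. Theorem \ref{thm:IR-A} is the combinatorial linchpin, and its precise form should be engineered exactly to convert these three qualitative ingredients into the quantitative dimension bound $\dim \SH^t(W;\F_p) = 1$.
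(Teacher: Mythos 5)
Your proposal identifies the correct three ingredients (Smith inequality, boundary depth bound, index recurrence) and the correct two-phase architecture (prove over $\F_p$, then universal coefficients), and the lower bound argument via Smith is fine. However, the upper-bound step has a genuine gap in how the Smith-propagated lower bound is pitted against the index-recurrence upper bound. You fix $t_0$ and obtain $\dim\SH^{p^k t_0}\ge 2$ for all $k$, then claim the combinatorics at ``arbitrarily large $p^k t_0$'' produces a contradiction. But the sequence $p^k t_0$ is sparse, while the recurrence events $C_l$ from Theorem \ref{thm:IR-A} form a sequence that need not intersect $\{p^k t_0\}$; the combinatorial upper bound $\dim\SH^{C_l}=1$ therefore says nothing directly about the points $p^k t_0$, and the contradiction does not close. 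The missing bridge is local constancy: since $\CS(\alpha)$ is discrete, $D_\tau:=\dim\SH^\tau$ is constant on a short interval $[t_0,t_0']$ disjoint from the spectrum, Smith gives $D_\tau\le D_{p^k\tau}$ for all $\tau$ in this interval, and the image interval $[p^k t_0, p^k t_0']$ has length $p^k(t_0'-t_0)\to\infty$. Because the $C_l$ have \emph{bounded gap} (this is precisely the new feature of Theorem \ref{thm:IR-A} relative to the earlier common-jump/index-recurrence theorems), for $k$ large some $C_l$ lies in that image interval, hence equals $p^k\tau$ for some $\tau\in[t_0,t_0']$, and $D_{t_0}=D_\tau\le D_{C_l}=1$.

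A second, smaller gap is that the final ``bookkeeping'' is gestured at but not identified. It is not a count of orbits; it is a parity/Euler-characteristic argument. One first shows (Lemma \ref{lemma:inter-cluster}, via Theorem \ref{thm:vanishing}) that beyond some action threshold no bar can join two distinct $\CA/\hmu$-clusters, since its length would exceed $\Cbar$. Then the index recurrence event is engineered so that every bar alive at $C_l$ has degree exactly $d_{il}+n$ with $d_{il}$ even, so $\SH^{C_l}(W;\F_p)$ is supported in a single parity. Since the Euler characteristic of $\SH^t(W)$ is $(-1)^n\chi(W)=(-1)^n$ independently of $t$ and there is no cancellation across parities, $\dim\SH^{C_l}(W;\F_p)=1$. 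Without this identification of the mechanism, the claim that ``the finite supply of prime orbits cannot sustain two uncancelled bars'' does not follow.
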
  

The \emph{support} of a $\Z$-graded vector space is the set of degrees
in which the vector space is non-zero. Denote by $\COH(x;\F)$ the
\emph{equivariant local symplectic homology} of $x$ with coefficients
in $\F$; see Section \ref{sec:LSH}.  An easy consequence of
Theorem \ref{thm:SH} is the following corollary also proved in Section
\ref{sec:pf-main3}.

\begin{CorollaryX}
  \label{cor:SH}
Let $W$ be as in Theorem \ref{thm:SH}. Then
\begin{itemize}
\item all $\F$-visible closed orbits $x$ have distinct periods
  and $\dim_\F\SH(x;\F)=2$;
\item the homology $\SH(x;\Q)$ is supported in two
  consecutive degrees for $x\in\PP(\Q)$, i.e.,
  \begin{equation}
    \label{eq:deg(x)}
  \supp\SH(x;\Q)=:\{h,h+1\}
\end{equation}
for some $h\in\Z$, and $\COH_i(x;\Q)=\Q$ when $i=h$ and zero
otherwise.
\end{itemize}
\end{CorollaryX}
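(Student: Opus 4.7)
The plan is to extract the corollary directly from the barcode structure delivered by Theorem \ref{thm:SH}, combined with two standard local-SH facts that will have been gathered in Section \ref{sec:LSH}. By Theorem \ref{thm:SH} the barcode of the persistence module $\SH(W;\F)$ places exactly one bar over each point of $(0,\infty)$. Since the pseudo-rotation hypothesis makes the action spectrum finite, the barcode is a finite collection of pairwise disjoint intervals whose union is $(0,\infty)$; I enumerate the transition points between consecutive bars as $T_1<\cdots<T_N$, each necessarily lying in the action spectrum of the Reeb flow.

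At each transition $T=T_i$ I would apply the long exact sequence of the pair $\bigl(\SH^{T-\epsilon}(W;\F),\SH^{T+\epsilon}(W;\F)\bigr)$ in filtered symplectic homology together with the standard local-to-global identification
\[
\SH^{(T-\epsilon,T+\epsilon]}(W;\F)\ \cong\ \bigoplus_{T(x)=T}\SH(x;\F)
\]
from Section \ref{sec:SH-pers}, where $T(x)$ denotes the action of $x$. Since $\dim_\F\SH^{T\pm\epsilon}(W;\F)=1$ by Theorem \ref{thm:SH}, and since a bar-switch at $T$ contributes exactly one ``death'' and one ``birth'' endpoint to the total local contribution, the long exact sequence forces
\[
\sum_{T(x)=T}\dim_\F\SH(x;\F)\ =\ 2.
\]

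Next I would invoke the Borel/Gysin-type decomposition
$\SH(x;\F)\cong\COH(x;\F)\otimes\H_*(S^1;\F)$
for the local homology at an isolated Reeb orbit, available from Section \ref{sec:LSH}. This makes $\dim_\F\SH(x;\F)$ even, and in particular $\geq 2$ whenever $x$ is $\F$-visible. The transition-sum constraint then leaves exactly one $\F$-visible orbit at each $T_i$, necessarily with $\dim_\F\SH(x;\F)=2$; all other closed orbits are $\F$-invisible. This gives both the distinct-period claim and the first bullet. For the second bullet, set $\F=\Q$: from $\dim_\Q\SH(x;\Q)=2$ the Gysin decomposition yields $\dim_\Q\COH(x;\Q)=1$, concentrated in a single degree which we call $h$; the same decomposition then places $\SH(x;\Q)$ in the two consecutive degrees $\{h,h+1\}$, as required.

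The only substantive input beyond Theorem \ref{thm:SH} is the evenness $\dim_\F\SH(x;\F)\in 2\Z$ and the Gysin-type decomposition uniformly in the ground field $\F$, including for degenerate and iterated orbits in positive characteristic. This is routine over $\Q$ via the Borel fibration and averaging, and should be established in full generality in Section \ref{sec:LSH}; once it is granted, the remainder of the proof is pure barcode bookkeeping.
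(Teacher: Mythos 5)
Your barcode-bookkeeping strategy is essentially the paper's: combine Theorem~\ref{thm:SH}, the local-to-global identification at an action value (the paper packages this as Lemma~\ref{lemma:bars-SH}, proved exactly via the long exact sequence you invoke), and a lower bound $\dim_\F\SH(x;\F)\ge 2$ for visible orbits. Two of your supporting claims, however, are incorrect and one of them is a genuine gap.

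First, a minor slip: the pseudo-rotation hypothesis gives finitely many \emph{prime} orbits, not a finite action spectrum. The spectrum $\bigcup_j \CA(x_j)\N$ is infinite (discrete and unbounded); the barcode is locally finite, not finite. This does not affect the argument, but the statement as written is false.

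The substantive error is your claim that $\SH(x;\F)\cong\COH(x;\F)\otimes\H_*(S^1;\F)$ ``uniformly in the ground field $\F$, including \ldots in positive characteristic,'' and that this gives evenness of $\dim_\F\SH(x;\F)$. This decomposition does \emph{not} hold over a field of positive characteristic $p$ when $p$ divides the order of iteration of $x$ — precisely the situation that is forced on you, since the proof of Theorem~\ref{thm:SH} runs through $\F=\F_p$ and high iterates. In that case $\COH(x;\F_p)$ can be infinite-dimensional (Remark~\ref{rmk:coeff}), so the tensor-product formula cannot hold, and the Gysin sequence does not split (the shift operator $\DD$ is non-trivial; see the discussion around \eqref{eq:SH-CH}, which is stated only over $\Q$). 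The evenness of $\dim_\F\SH(x;\F)$ must instead be derived from the vanishing of the local Euler characteristic, $\chi(x;\F)=0$, which holds over every field for structural reasons in the local Floer complex; this is exactly \eqref{eq:dimgeq2} in Section~\ref{sec:LSH}. With that substitution the first bullet goes through verbatim, and the Gysin splitting is then legitimately invoked \emph{only} over $\Q$ to obtain the second bullet, as in the paper's proof.
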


From now on, we can restrict our attention to ground fields of zero
characteristic, e.g., $\F=\Q$.  In what follows, we call $h$ from
\eqref{eq:deg(x)} \emph{the degree $\deg(x)$ of $x$}. In other words,
$h=\deg(x)\in \Z$ is the only integer such that $\COH_h(x;\Q)\neq 0$
(and hence is $\Q$.)  For instance, $\deg(x)=\mu(x)$ when $x$ is
non-degenerate and good. We emphasize that this notion is specific to
the setting of Theorem \ref{thm:SH} and Corollary \ref{cor:SH}, i.e.,
$\deg(x)$ is defined only when the flow is dynamically convex and has
finitely many prime closed Reeb orbits and $x$ is $\Q$-visible.
Recall that $\PP=\PP(\Q)$ stands for the set of all, not necessarily
prime, $\Q$-visible closed orbits. Denote by $\CA(x)$ the action (aka
the period) of $x$ and by $\hmu(x)$ the mean index of $x$; see
Sections \ref{sec:Liouville} and \ref{sec:CZ+DC}.

Our next theorem, proved in Section \ref{sec:pf-main2}, provides yet
more detailed information on closed Reeb orbits in $M$ and the
persistence module $\SH(W;\Q)$ and also serves as a bridge between
Theorems \ref{thm:mult} and \ref{thm:SH}.

\begin{TheoremX}
  \label{thm:Orbits}
  Assume that $W$ is as in Theorem \ref{thm:SH}.  Then $\PP$ and the
  barcode $\CB$ of the persistence module $\SH(W;\Q)$ have the
  following properties:

  \begin{itemize}

  \item[\reflb{O-index}{\rm{(O1)}}] The map $\deg\colon \PP\to \Z$ is
    a bijection onto $n-1+2\N=\{n+1,n+3,\ldots\}$ and orbits with
    larger degree have larger period. Likewise, $\deg\colon \CB\to\Z$
    is a bijection onto $n-2+2\N=\{n,n+2,\ldots\}$ and the bars with
    larger degree have larger period (at, say, the beginning point).

  \item[\reflb{O-clusters}{\rm{(O2)}}]
      All closed orbits $x\in \PP$ have the same ratio
    $\CA(x)/\hmu(x)$.
   
\end{itemize}
\end{TheoremX}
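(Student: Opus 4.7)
My plan is to combine Theorem~\ref{thm:SH} and Corollary~\ref{cor:SH} into a rigid barcode picture for $\SH(W;\Q)$, then use dynamical convexity and the index-recurrence machinery of Section~\ref{sec:IR} to pin the degrees down exactly, and finally derive~(O2) from~(O1) by a density argument. First I would enumerate $\PP$ by increasing action as $x_1, x_2, \ldots$, which is well-defined by Corollary~\ref{cor:SH} (distinct periods), and set $a_i = \CA(x_i)$. Since $\dim \SH^t(W;\Q) = 1$ for every $t > 0$ by Theorem~\ref{thm:SH}, the barcode $\CB$ must consist of bars $B_0$ on $(0, a_1]$ and $B_i$ on $(a_i, a_{i+1}]$ for $i \geq 1$, each with a well-defined degree $d_i$. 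By Corollary~\ref{cor:SH}, at action $a_i$ the chain complex acquires two generators in consecutive degrees $h_i$ and $h_i+1$ with $h_i = \deg(x_i)$, and tracking the action-filtered long exact sequence forces one of them to start $B_i$ while the other kills the persistent class of $B_{i-1}$. A short case analysis leaves exactly two possibilities: either (A) $h_i = d_{i-1}$ and $d_i = d_{i-1}$, or (B) $h_i = d_{i-1}+1$ and $d_i = d_{i-1}+2$. A topological computation of $\SH^t(W;\Q)$ for small $t>0$ pins $d_0 = n$, and dynamical convexity ($h_i \geq \mu_-(x_i) \geq n+1$) immediately rules out~(A) at $x_1$, forcing $h_1 = n+1$ and $d_1 = n+2$.

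The hard part is ruling out~(A) at every subsequent step, which is equivalent to the parity statement $\deg(x) \equiv n+1 \pmod{2}$ for every $x \in \PP$. I plan to settle this by coupling Theorem~\ref{thm:IR-A} with the iteration behavior of $\deg$: an occurrence of~(A) at some $x_i$ would produce a $\Q$-visible orbit whose degree has the ``wrong'' parity relative to $n$, and feeding such an orbit through the index-recurrence theorem would yield an iterate whose Conley--Zehnder index violates the dynamical-convexity bound $\mu_- \geq n+1$ or contradicts Corollary~\ref{cor:SH}. Once~(A) is excluded, induction gives $d_i = n+2i$ and $h_i = n-1+2i$; combined with strict monotonicity of $i \mapsto a_i$, this establishes both bijections and the period-monotonicity asserted in~(O1).

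For~(O2), I fix any prime closed orbit $y$ and consider the infinite subsequence of iterates with $y^k \in \PP$. Then $\CA(y^k) = k\CA(y)$, and since $|\deg(x) - \hmu(x)| \leq n$ for every $\Q$-visible $x$, also $\deg(y^k) = k\hmu(y) + O(1)$. If $y^k$ is the $i_k$-th orbit of $\PP$, then (O1) gives $\deg(y^k) = n-1+2i_k$, hence $i_k = k\hmu(y)/2 + O(1)$; combining with $a_{i_k} = k\CA(y)$ yields $a_{i_k}/i_k \to 2\CA(y)/\hmu(y)$ as $k \to \infty$. But the limit of $a_i/i$ depends only on the global sequence of $\Q$-visible actions, not on which prime $y$ was chosen to select the subsequence, so $\CA(y)/\hmu(y)$ takes the same value for every prime $y$; multiplicativity of $\CA$ and $\hmu$ under iteration extends this equality to all of $\PP$, giving~(O2).
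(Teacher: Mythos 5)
Your setup is correct: enumerating $\PP$ by action (using distinct periods from Corollary~\ref{cor:SH}), reading off the bars from $\dim\SH^t(W;\Q)=1$ (Theorem~\ref{thm:SH}) and Lemma~\ref{lemma:bars-SH}, extracting the dichotomy (A)/(B), fixing $d_0=n$ from the small-action computation, and ruling out (A) at $x_1$ by dynamical convexity -- all of this parallels the paper's case analysis, where (A) and (B) appear as possibilities~(P2) and~(P1). You also correctly identify that the crux of~(O1) is to exclude case~(A) at \emph{every} step.

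However, that crux is exactly where the proposal has a genuine gap: you do not actually carry out the argument. You write that you ``plan to settle this by coupling Theorem~\ref{thm:IR-A} with the iteration behavior of $\deg$'' and that an occurrence of~(A) ``would produce a $\Q$-visible orbit whose degree has the wrong parity,'' but you never exhibit the iterate or the index violation. Worse, this is not the mechanism the paper uses, and it is not clear it can be made to work: a single wrong-parity orbit does not obviously feed into the index recurrence theorem to produce a violation of $\mu_-\geq n+1$, since that bound concerns Conley--Zehnder indices, not the degree of local homology, and the two can differ by up to $n-1$. What the paper actually uses here -- and what is completely missing from the proposal -- is the \emph{equivariant} symplectic homology input: $\COH^+(W;\Q)$ is one-dimensional precisely in degrees $n-1+2\N$ by~\eqref{eq:CH}, so via the spectral sequence~\eqref{eq:spec-CH} every such degree must be hit by some orbit. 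Combined with the monotonicity forced by~(P1)/(P2) this already gives uniqueness in degrees of parity $n+1$, and then~\eqref{eq:odd} rules out any orbit of degree in $n+2\N$ (such an orbit would force two orbits one degree above or below it, contradicting the uniqueness just established). No index recurrence, no additional appeal to dynamical convexity: the parity statement is pure persistence-module bookkeeping against the fixed equivariant target. Your proposal leaves this hole open.

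For~(O2), your density argument is a genuinely different route from the paper's, which reduces to the action-selector criterion of~\cite[Thm.\ 6.4]{GG:LS} after establishing that $x^k\in\PP$ for all odd admissible $k$ via $\chieq(x)\neq 0$ and Lemma~\ref{lemma:chieq-it}. The elementary asymptotics you propose ($a_{i_k}/i_k\to 2\CA(y)/\hmu(y)$, then compare across primes using strict monotonicity of $a_i$) can in principle be made rigorous, which is an attractive feature -- it keeps the argument self-contained within the already-established bijection of~(O1). But it also has a gap: you take for granted that each relevant prime has \emph{infinitely many} $\Q$-visible iterates, which is exactly the content of the $\chieq$-persistence argument via Lemma~\ref{lemma:chieq-it} that you omit. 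Without it, the ``infinite subsequence'' you invoke might be finite or even empty. You would also need to be somewhat careful comparing different subsequential limits of $a_i/i$, though the monotonicity of $a_i$ together with the linear growth of degrees does furnish the needed contradiction once the infinitude of visible iterates is secured.
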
  
 
This theorem asserts, in particular, that the graded barcode of
$\SH(W;\Q)$ looks exactly like that of an irrational ellipsoid: the
bars form an ascending infinite staircase with one riser of height one
(the degree) for each closed orbit and starting with a tread of degree
$n$ beginning at the interior of $W$. We will show in the next section
that the similarity with ellipsoids goes farther.  Note also that as a
consequence of \ref{O-index} the flow is automatically lacunary in the
terminology from \cite{AM:recent}. It is also worth noting that the
ratios $\CA(x)/\hmu(x)$ playing an important role in these questions
were originally, to the best of our knowledge, considered in
\cite{Ek,EH}.

\begin{Remark} We conjecture that \eqref{eq:dim-key} is equivalent to
  that the Reeb flow is dynamically convex and has only a finite
  number of prime orbits at least in the non-degenerate case. It not
  hard to see that then \eqref{eq:dim-key} implies dynamical
  convexity. However, proving finiteness appears to be much more
  difficult. Without non-degeneracy one would also have to account for
  invisible orbits.
\end{Remark} 

\begin{Remark}
  A consequence of Theorem \ref{thm:Orbits} and the index recurrence
  theorem (Theorem \ref{thm:IR-A}) is that a dynamically convex Reeb
  flow has exactly $n$ prime closed orbits whenever all closed orbits,
  prime and iterated, are $\Q$-visible; see Corollary
  \ref{cor:mult+HZ}.  Corollary \ref{cor:SH} and Theorem
  \ref{thm:Orbits} are revisited and further refined in
  \cite{CGG:Reeb-HZ2}.
\end{Remark}

\subsection{Application: ellipsoid comparison}
\label{sec:ellipsoids}
The main results of the paper have an interesting application in the
context of the question of the extent to which symplectic topology of
$W$ or of its interior is tied to the dynamics on $M=\p W$. Assume
that $W\subset \R^{2n}$ is star-shaped and the flow on $M=\p W$ is
non-degenerate, dynamically convex and has finitely many prime closed
orbits. We further require that the number of such orbits is exactly
$n$ and these orbits are non-alternating. For instance, this is the
case when $W$ is symmetric as in Theorem \ref{thm:HZ}. In this
section, in the spirit of \cite{GG:RvsPR}, we show that then, under a
very minor non-resonance condition, the linearized flows along the
prime closed Reeb orbits on $M=\p W$ are completely determined up to
conjugation by the persistence module $\SH(W;\F)$ and, moreover, these
linearized flows are exactly the same as those for an ellipsoid with
the same action spectrum as $M$.

To be more specific, denote by
$\vDelta:=\{\Delta_1<\cdots< \Delta_n\}\in\R^n_+$ the ordered set of
the mean indices of these orbits. (By Theorem \ref{thm:Orbits}, all
closed orbits have distinct mean indices. In what follows, we will
always order prime orbits so that their mean indices form an
increasing sequence.)  Consider the solid ellipsoid
$W_{\vDelta} \subset \C^n=\R^{2n}$ given by
$$
\sum_j \frac{|z_j|^2}{\Delta_j}\leq 1
$$
and its boundary $M_{\vDelta}=\p W_{\vDelta}$. The prime closed orbits
on $M_{\vDelta}$ also have mean indices $\Delta_1<\cdots< \Delta_n$
and up to scaling $M$ and $M_{\vDelta}$ have the same action
spectra. Indeed, let us scale $M$ so that $\hmu(x)=\CA(x)$ for all
closed orbits $x$ on $M$. This is possible by \ref{O-clusters}. Then
for both $M$ and $M_{\vDelta}$ the action spectrum is simply the union
of sequences $\Delta_j\N$.

\begin{CorollaryX}
  \label{cor:ellipsoids}
  Let $W\subset \R^{2n}$ be a star-shaped domain. Assume that the flow
  on $M=\p W$ is non-degenerate, dynamically convex, has exactly $n$
  prime closed orbits. 
  In
  addition, let us require the following non-resonance condition: for
  every $j$ all ratios $\pm\Delta_j/\Delta_i$, $i\neq j$, are distinct
  modulo $\Z$.  Then the linearized flows along the corresponding
  closed orbits in $M$ and $M_{\vDelta}$ are equal up to conjugation
  as elements of $\tSp\big(2(n-1)\big)$.
\end{CorollaryX}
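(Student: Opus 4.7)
The plan is to derive the corollary from the matching of iterate data provided by Theorems \ref{thm:Orbits} and \ref{thm:mult}: once we show that the Conley--Zehnder iterate sequences and mean indices along paired prime orbits of $M$ and $M_{\vDelta}$ coincide, a uniqueness argument under the non-resonance hypothesis recovers the conjugacy class in $\tSp\bigl(2(n-1)\bigr)$.

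First I would extract and compare the iterate data on the two sides. By Theorem \ref{thm:mult}(M2), all prime closed orbits $x_1,\dots,x_n$ on $M$ are non-alternating, so every iterate $x_j^k$ is good and non-degenerate, whence $\deg(x_j^k)=\mu(x_j^k)$. Theorem \ref{thm:Orbits} then identifies $\PP$ with $\{x_j^k\}_{j,k}$ and tells us that $\deg:\PP\to\{n+1,n+3,\dots\}$ is a period-monotone bijection while $\CA(x)/\hmu(x)$ is a common constant; with the scaling $\CA=\hmu$, this determines each $\mu(x_j^k)$ entirely by the ordering of the numbers $k\Delta_j$. I would then carry out the analogous explicit computation on $M_{\vDelta}$: the non-resonance hypothesis on $\{\Delta_j\}$ ensures that every axis orbit and every iterate is non-degenerate and non-alternating, and the familiar ellipsoid formulas give mean indices $\Delta_j$, the same period-monotone bijection onto $\{n+1,n+3,\dots\}$, and hence identical iterate data:
$$
\mu(x_j^k)\;=\;\mu\bigl(x_j^{M_{\vDelta},k}\bigr),\qquad \hmu(x_j^k)\;=\;k\Delta_j\qquad\text{for all }j,k.
$$

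Next I would reduce to the elliptic case and decompose the transverse linearized return maps. Dynamical convexity, non-degeneracy, and the bound of $n<\infty$ prime orbits, together with the theorem of \cite{CGGM:Inv}, rule out hyperbolic closed orbits, so every $x_j$ is elliptic. Its transverse return map thus lifts to an element
$$
\Phi_j\;=\;R_{\theta_1^j}\oplus\cdots\oplus R_{\theta_{n-1}^j}\;\in\;\tSp\bigl(2(n-1)\bigr),
$$
determined by signed rotation numbers $\theta_l^j\in\R$. A Salamon--Zehnder type formula expresses $\mu(x_j^k)$ as $\sum_l\sgn(\theta_l^j)\bigl(2\lfloor k|\theta_l^j|\rfloor+1\bigr)$ and $\hmu(x_j)$ as $2\sum_l\theta_l^j$. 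On $M_{\vDelta}$ the rotation numbers are explicit rational functions of $\{\Delta_j\}$, and the non-resonance assumption on the $\pm\Delta_j/\Delta_i$ transfers to pairwise distinctness modulo $\Z$ of the multisets $\{\theta_l^j\}$ on both $M$ and $M_{\vDelta}$.

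The heart of the argument, and the principal obstacle, is the uniqueness step: the multiset of signed rotation numbers $\{\theta_l^j\}$ must be uniquely recovered from $\hmu(x_j)$ together with the full iterate sequence $\bigl(\mu(x_j^k)\bigr)_{k\geq 1}$, under the assumption that the $|\theta_l^j|$ are pairwise distinct modulo $\Z$. I would tackle this by feeding the sequence through the index recurrence theorem (Theorem \ref{thm:IR-A}): the asymptotic density of indices $k$ at which a given $\lfloor k|\theta_l^j|\rfloor$ jumps reads off each $|\theta_l^j|$ modulo $\Z$, the pairwise distinctness prevents different rotation contributions from being conflated, and the signs are pinned down by the mean-index constraint. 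Applying this uniqueness to the matched iterate data forces the two multisets of rotation numbers to coincide, yielding the claimed conjugacy of $\Phi_j$ with the ellipsoidal $\Phi_j^{\mathrm{ell}}$ in $\tSp\bigl(2(n-1)\bigr)$.
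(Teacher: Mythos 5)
Your matching of the iterate data (identical Conley--Zehnder sequences $\mu(x_j^k)=\mu(y_j^k)$ and mean indices $k\Delta_j$ after the normalization $\CA=\hmu$) is correct and is exactly what the paper establishes; after that point, however, the routes diverge significantly, and your version has two soft spots.

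First, the paper does not first prove that all orbits are elliptic and then decompose the return maps as direct sums of rotations. Instead, it cites \cite[Cor.\ 4.3]{GG:RvsPR}, which directly converts the matching of iterate Conley--Zehnder data together with the non-resonance assumption into a matching of the first Krein type eigenvalues. Since the ellipsoid's return map has $n-1$ distinct elliptic eigenvalue pairs, this matching already forces the return map along $x_j$ to be fully elliptic; ellipticity is a \emph{consequence} of the comparison, not a precondition. Your attempt to obtain ellipticity up front from \cite{CGGM:Inv} is shaky: what that theorem excludes (as quoted in this paper) is the presence of a hyperbolic closed orbit, and in dimensions $2n-1\ge 5$ a non-degenerate orbit could a priori have a loxodromic quadruple, or mixed elliptic/hyperbolic spectrum, without being ``hyperbolic'' in the sense used there. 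So the reduction to a direct sum of rotations $R_{\theta_1^j}\oplus\cdots\oplus R_{\theta_{n-1}^j}$ is not justified at the point you invoke it.

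Second, your uniqueness step is essentially an attempt to re-prove \cite[Cor.\ 4.3]{GG:RvsPR} from scratch rather than to cite it. That may be possible, but the sketch you give does not close the argument: from $\mu(x_j^k)=\sum_l\sgn(\theta_l^j)\bigl(2\lfloor k|\theta_l^j|\rfloor+1\bigr)$ one only observes the combined jump pattern, and separating the individual Sturmian contributions under the distinctness-mod-$\Z$ hypothesis is precisely the nontrivial content being outsourced. Moreover, the role you assign to Theorem \ref{thm:IR-A} (``reading off jump densities'') is not what that theorem does --- it produces particular iterates with matched index behavior, not density statistics. Finally, the paper also needs the mean-index identity $\hmu(y_j)=\Delta_j$ on the ellipsoid, which it obtains from Viterbo's resonance relation $\sum_i 1/\Delta_i=1/2$ (forced by the normalization $\CA=\hmu$); your proposal takes this for granted. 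In short: the opening comparison is right, but the two concluding steps need to be replaced by the direct appeal to \cite[Cor.\ 4.3]{GG:RvsPR}, which handles both the eigenvalue reconstruction and the ellipticity at once.
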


This corollary, proved in Section \ref{sec:ellipsoids-pf}, asserts
that within the admittedly very narrow class of hypersurfaces $M$
meeting the conditions of the corollary, the action spectrum
completely determines the linearized flows along prime closed orbits
and, of course, the actions. In particular, all closed Reeb orbits in
$M$ are elliptic. At the same time, dynamics of the Reeb flow on $M$,
beyond closed orbits, can be radically different from that on
$M_{\vDelta}$ and quite non-trivial. For instance, the Reeb flow on
$M$ can be ergodic even in the setting of Corollary
\ref{cor:ellipsoids}; see \cite{Ka}. Clearly, then the closed domains
$W$ and $W_{\vDelta}$ are not symplectomorphic and such dynamics
features are not detected by their symplectic homology persistence
modules, which are isomorphic. We do not know if the interiors of $W$
and $W_{\vDelta}$ are symplectomorphic or not. (The answer may depend
on Liouville/Diophantine properties of $\vDelta$.)

\medskip\noindent\subsection*{Acknowledgments} The authors are
grateful to Leonardo Macarini, Marco Mazzucchelli and Felix Schlenk
for useful remarks and suggestions. The authors would also like to
thank the referees for thoroughly scrutinizing the paper.

\section{Symplectic homology}
\label{sec:SH}

\subsection{Conventions and notation}
\label{sec:Liouville}
Let $\alpha$ be the contact form on the boundary $M=\p W$ of a
Liouville domain $W^{2n\geq 4}$. We will also use the same notation
$\alpha$ for a primitive of the symplectic form $\omega$ on $W$. The
grading of Floer or symplectic homology is essential for our purposes
and for the sake of simplicity we require that $c_1(TW)=0$. (This
condition can be relaxed.) As usual, denote by $\WW$ the symplectic
completion of $W$, i.e.,
$$
\WW=W\cup_M M\times [1,\,\infty)
$$
with the symplectic form $\omega=d\alpha$ extended from $W$ to
$M\times [1,\infty)$ as
$$
\omega := d(r\alpha),
$$
where $r$ is the coordinate on $[1,\,\infty)$. Sometimes it is
convenient to extend the function $r$ to a collar of $M=\p W$ in
$W$. Thus we can think of $\WW$ as the union of $W$ and
$M\times [1-\eps,\,\infty)$ for small $\eps>0$ with
$M\times [1-\eps,\, 1]$ lying in $W$ and the symplectic form given by
the same formula.

The Reeb vector field $R$ of $\alpha$ on $M$ is determined by the
conditions $i_Rd\alpha=0$ and $\alpha(R)=1$, and the flow of $R$ is
called the Reeb flow. The period or action of a closed orbit $x$ of
the flow, denoted by $\CA(x)$, is the integral of $\alpha$ over
$x$. The \emph{action spectrum} $\CS(\alpha)$ is the collection of
periods of all closed Reeb orbits.

\begin{Example}
The main example we are interested in is where $W$ is a star-shaped
domain in $\R^{2n}$ with smooth boundary. Then we have a natural
identification $\WW=\R^{2n}$ as symplectic manifolds.
\end{Example}

As we have already mentioned in Section \ref{sec:results}, a closed
Reeb orbit $x$ is \emph{non-degenerate} if all eigenvalues of the
Poincar\'e return map are different from 1, and $x$ is \emph{strongly
  non-degenerate} if all iterates $x^k$, $k\in \N$, are
non-degenerate.  The Reeb flow or the form $\alpha$ is said to be
non-degenerate if all closed orbits are non-degenerate. For a closed
orbit $x$, we denote by $x^k$, $k\in\N$, its $k$th iterate. This is
simply the orbit $x$ traversed $k$ times. Clearly,
$\CA\big( x^k\big)=k\CA(x)$. An orbit is said to be \emph{prime} if it
is not an iterate of any other orbit. Otherwise, we will call an orbit
\emph{iterated}.

When $x$ is non-degenerate, we denote by $\mu(x)$ the Conley--Zehnder
index of $x$. Without the non-degeneracy condition, we have the lower
and upper semicontinuous extensions $\mu_\pm(x)$ of the
Conley--Zehnder index and the mean index $\hmu(x)$ associated to $x$.
Referring the reader to Section \ref{sec:CZ-LA} for the definitions
and a discussion, we only note here that these indices depend on an
additional piece of data (not unique in general) which determines the
choice of a symplectic trivialization of the contact structure along
$x$; see Section \ref{sec:CZ-orbits}.

Recall also that a non-degenerate closed orbit $x$ is said to be
\emph{non-alternating} if the number (counted with algebraic
multiplicity) of eigenvalues in $(-1, 0)$ of the return map is even.
Otherwise, we call $x$ \emph{alternating}. When $x$ is strongly
non-degenerate, these conditions are equivalent to that all indices
$\mu\big(x^k\big)$, $k\in\N$, have the same parity or that the parity
alternates. (A word is due on terminology: Non-alternating orbits
  are sometimes referred to as negative hyperbolic, but we find this
  term misleading when $\dim M>3$. The frequently used terms ``even/odd''
  are more intuitive; however they clash with the parity of an
  orbit.) While formally speaking these definitions require no
  conditions on $x$, we will apply them only when $x$ is prime and
  strongly non-degenerate. The notion of (non-)alternating closed
orbit is not particularly useful when $x^k$ is allowed to be
degenerate; see Example \ref{exam:iterates2}. The \emph{parity} of a
closed non-degenerate orbit $x$ is simply the parity of $\mu(x)$. A
non-degenerate closed orbit $x$ is said to be \emph{bad} if it is an
even iterate of an alternating prime orbit, and \emph{good} otherwise.

Throughout the paper we will impose the following additional condition
on the Reeb flow of $\alpha$ on $M$:
\begin{itemize}
\item[\reflb{BA}{\rm{(BA)}}] {\bf Background assumption:} \emph{All
    closed orbits are isolated, i.e., for every $t\in\R$ there are only
    finitely many closed orbits with period less than $t$.}
\end{itemize}
This condition is obviously satisfied in two cases we are interested
in here: when $\alpha$ is non-degenerate and when the Reeb flow has
only finitely many prime closed orbits. The role of this condition is
rather expository than conceptual, and most of the discussion in the
next two sections carries over to the general case with only technical
changes. For instance, the symplectic homology is still a persistence
module (see Section \ref{sec:persistence}) when $\CS$ is only closed
and nowhere dense and $V^s$ might be infinite-dimensional, once
\ref{PM2} in the definition of a persistence module is suitably
modified, \cite {CGGM:Entropy}.

We implicitly adopt the conventions from \cite{CGGM:Inv, CGGM:Entropy,
  GG:LS} on the almost complex structures, Hamiltonians and the Floer
equation. For the sake of brevity, since these conventions are never
explicitly used in the paper, we leave them out. Here we only mention
that with these conventions, the Conley--Zehnder index of a
non-degenerate minimum with small Hessian in dimension $2n$ is $n$.

The characteristic of the ground field $\F$ is essential in some parts
of the paper. When the characteristic is zero, we can and usually will
take $\F=\Q$. We use the notation $\F_p$ for the positive
characteristic field $\Z/p\Z$, where $p$ is prime, or any other field
of characteristic $p$.

\subsection{Graded persistence modules}
\label{sec:persistence}
While one can easily avoid using persistence modules in the proofs of
our main results, the language comes handy. In this section we briefly
recall relevant definitions in the form and generality suitable for
our purposes. We refer the reader to \cite{PRSZ} for a general
introduction to persistence modules, their applications in geometry
and analysis and further references.

Fix a field $\F$, which we will suppress in the notation. A
\emph{(graded) persistence module} $(V,\pi)$ is a family of
$\Z$-graded vector spaces $V^s$ over $\F$ parametrized by $s\in \R$
together with a functorial family $\pi$ of structure maps. These are
$\Z$-graded linear maps $\pi_{st}\colon V^s\to V^t$, where $s\leq t$,
and functoriality is understood as that $\pi_{sr}=\pi_{tr}\pi_{st}$
whenever $s\leq t\leq r$ and $\pi_{ss}=\id$. In what follows, we often
suppress $\pi$ in the notation and simply refer to $(V,\pi)$ as $V$.
In such a general form the concept is not particularly useful and one
usually imposes additional conditions on $(V,\pi)$. These conditions
vary depending on the context. Below we spell out the setting most
suitable for our purposes.

Namely, we require that there is a closed, discrete, bounded from
below subset $\CS\subset \R$, which we call the \emph{spectrum} of
$V$, and the following four conditions are met:
\begin{itemize}

\item[\reflb{PM1}{\rm{(PM1)}}] The persistence module $V$ is
  \emph{locally constant} outside $\CS$, i.e., $\pi_{st}$ is an
  isomorphism whenever $s\leq t$ are in the same connected component
  of $\R\setminus \CS$.

\item[\reflb{PM2}{\rm{(PM2)}}] In every degree $m$, the dimension of
  $V^s$ is finite for every $s$: $\dim V^s_m<\infty$.
 
\item[\reflb{PM3}{\rm{(PM3)}}] \emph{Left-semicontinuity}: For all
  $t\in\R$,
  \begin{equation}
    \label{eq:semi-cont}
    V^t=\varinjlim_{s<t} V^s.
  \end{equation}

\item[\reflb{PM4}{\rm{(PM4)}}] \emph{Lower bound}: $V^s=0$ when $s<s_0$
  for some $s_0\in\R$. (Throughout the paper we will assume that
  $s_0=0$.)

\end{itemize}

In the setting we are interested in, $V^s$ is naturally defined only
for $s\not\in\CS$; then the definition is extended to all $s\in\R$ by
\eqref{eq:semi-cont}. By \ref{PM4}, we can always assume that
$s_0\leq \inf \CS$, which essentially forces $\CS$ to be bounded from
below. We emphasize, however, that $\CS$ is not assumed to be bounded
from above and this is actually never the case in our setting.  It
will be convenient to set
$$
V^\infty=\varinjlim_{s\to\infty} V^s.
$$

A basic example motivating requirements \ref{PM1}--\ref{PM4} is that
of the sublevel homology of a smooth function.

\begin{Example}[Homology of sublevels] Let $M$ be a smooth manifold
  and $f\colon M\to \R$ be a proper smooth function bounded from below
  and having a discrete set of critical values. Set
  $V^s:=H_*\big(\{f<s\};\F\big)$ with the structure maps induced by
  inclusions. It is not hard to see that conditions
  \ref{PM1}--\ref{PM4} are met with $\CS$ being the set of critical
  values of $f$. 
\end{Example}

By definition, $\supp V^s$, the \emph{support} of $V^s$, is the set of
degrees $m$ such that $V_m^s\neq 0$.

Recall furthermore that a \emph{(graded) interval persistence module}
$\F_{(a,\,b]}$, where $-\infty<a<b\leq \infty$, is defined by setting
$$
V^s_m:=\begin{cases}
  \F & \text{ when } s\in (a,\,b],\\
  0 & \text{ when } s \not\in (a,\,b],
\end{cases}
$$
in one degree $m$ and $V^s_l=0$ in all other degrees $l\neq
m$. Furthermore, in degree $m$ we have $\pi_{st}=\id$ if
$a<s\leq t\leq b$ and $\pi_{st}=0$ otherwise. The interval modules are
exactly the simple persistence modules, i.e., the persistence modules
that are not a (non-trivial) direct sum of other persistence modules.

The normal form or structure theorem asserts that every persistence
module meeting the above conditions can be decomposed as a direct sum
of a countable collection (i.e., a countable multiset) of interval
persistence modules. Moreover, this decomposition is unique up to
re-ordering of the sum. We refer the reader to \cite{PRSZ} for a proof
of this theorem and further references, and also, e.g., to \cite{CZCG,
  ZC} for previous or related results.  The multiset $\CB(V)$ of
intervals entering this decomposition is referred to as the
\emph{barcode} of $V$ and the intervals occurring in $\CB(V)$ as
\emph{bars}. In addition, every bar is assigned a degree which is by
definition the degree of the component of $V^s$ it comes from.

Finally, for a graded persistence module, just as for graded vector
spaces, we define the grading shifted persistence module as
$$
(V[n])_m:=V_{m+n}.
$$
Thus, when $n\geq 0$, the grading of $V$ gets shifted by $n$ to the
left.

\subsection{Filtered symplectic homology}
\label{sec:SH-pers}
The first versions of symplectic homology were introduced in
\cite{CFH, Vi} and since then the theory has been further
developed. We refer the reader to, e.g., \cite{Abu, Bo, BO0, BO,
  BO:Gysin, Se:biased} for the definitions and basic constructions,
which we mainly omit or just informally sketch for the sake of
brevity, focusing on lesser known facts. Here, treating equivariant
and non-equivariant symplectic homology as persistence modules, we
closely follow \cite{CGGM:Entropy, GG:LS}.

Let the Liouville domain $W$ be as in Section \ref{sec:Liouville}.
Fixing a field $\F$, we denote the non-equivariant filtered symplectic
homology over $\F$ for the action interval $I\subset \R$ by
$\SH^I(W;\F)$. This is a vector space over $\F$, graded by the
Conley--Zehnder index. Furthermore, we set
$$
\SH^t(W;\F):=\SH^{(-\infty,t)}(W;\F).
$$
This family of vector spaces together with the natural ``inclusion''
maps form a graded persistence module in the sense of Section
\ref{sec:persistence} with $\CS=\CS(\alpha)\cup\{0\}$. (It is
essential at this point that $W$ meets the background requirement
\ref{BA}. Otherwise, we would need to modify the definition of a
persistence module; see \cite{CGGM:Entropy}.) We denote the resulting
persistence module by $\SH(W;\F)$. The global symplectic homology is
then $\SH^\infty(W;\F)$.

Somewhat informally, $\SH^t(W;\F)$ can be defined via the following
construction. As is pointed out in Section \ref{sec:persistence}, it
suffices to define $\SH^t(W;\F)$ when $t\not\in\CS$. Assume first that
the Reeb flow is non-degenerate. Let $H$ be an autonomous Hamiltonian
on $\WW$, which is a convex, strictly increasing function $h(r)$ of
$r$ on $M\times [1,\infty)$ and $0$ on $W$. The function $h$ is
required to be linear at infinity, i.e., of the form $tr-c$ with
$c>0$.  Furthermore, let $\tH$ be a small Morse perturbation of $H$
supported near $W$. Then $\tH$ is a Morse--Bott Hamiltonian in the
sense of \cite{Bo, BO0}, and $\SH^t(W;\F)$ is simply the Floer
homology of $H$ or equivalently $\tH$; cf.\ \cite[Thm.\ 3.5 and Cor.\
3.6]{CGGM:Entropy}. This is the homology of a complex with two
generators $\hx$ and $\cx$ of degrees, $\mu(x)+1$ and $\mu(x)$
respectively, for every closed Reeb of orbit $x$ with $\CA(x)<t$ and
one generator for every critical point of $\tH$; see, e.g., \cite{Bo,
  BO0, CGGM:Inv}. In fact, one can trim down this complex further and
make the number of generators of the latter type equal to
$\dim\H(W;\F)$. In any event, when $W$ is star-shaped we can have only
one such generator, which we denote by $[W]$.  When the Reeb flow is
degenerate, we first take a small non-degenerate perturbation of the
contact form and then apply this construction. For an interval
$I=[t',t)$ with endpoints outside $\CS$, $\SH^{I}(W;\alpha)$ is
  defined as the homology of the quotient complex.

As a consequence of this construction and \ref{BA}, for every $t$ the
support
$$
\supp \SH^t(W;\F):=\{ m\in\Z\mid \SH^t_m(W;\F)\neq 0\}
$$
is a finite set. Furthermore, when $\delta>0$ is sufficiently small,
e.g., $\delta<\inf\CS(\alpha)$,
\begin{equation}
\label{eq:SH-small-delta}
\SH^\delta(W;\F):=\H(W;\F)[-n];
\end{equation}
see, e.g., \cite{BO, Vi}.  It readily follows from the above
description of the symplectic homology as the Floer homology of $\tH$
that the Euler characteristic of the graded vector space $\SH^t(W;\F)$
is independent of $t>0$. To be more precise, for any field $\F$ and
every $t>0$, we have
\begin{equation}
  \label{eq:Euler}
  \sum_m(-1)^m\dim \SH^t_m(W;\F)=(-1)^n\chi(W).
\end{equation}
Less directly, one can also use \eqref{eq:chi0}, \eqref{eq:sum} and
\eqref{eq:long-exact} to prove \eqref{eq:Euler}.

The construction of symplectic homology carries over to $\Z$ or any
other coefficient ring (see, e.g., \cite{Abu}), and hence we have the
universal coefficient theorem. In particular, for a fixed $t$, in
every degree $m$ we have
\begin{equation}
  \label{eq:universal}
  \dim\SH^t_m(W;\Q)=\dim\SH^t_m(W;\F_p),
\end{equation}
when the characteristic $p$ is large enough (depending on
$t$). Indeed, the torsion part of the group $\SH^t_m(W;\Z)$ is finite
and \eqref{eq:universal} holds whenever $\SH^t_m(W;\Z)$ has no
elements of order $p$.

In this paper we will also use two less standard properties of
symplectic homology.  The first of these is \cite[Prop.\ 3.5]{GS},
which is essentially due to K. Irie, combined with \cite[Thm.\
3.8]{GS}.

\begin{Theorem}[Boundary depth upper bound, \cite{GS}]
  \label{thm:vanishing}
  Assume that $\SH^\infty(W;\F)=0$. Then all bars in the barcode of
  $\SH(W; \F)$ are bounded by some constant $\Cbar$ independent of the
  location of the bar. When $W$ is displaceable in $\WW$, the
  displacement energy can be taken as $\Cbar$, which is then
  independent of $\F$.
 \end{Theorem}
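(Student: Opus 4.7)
My plan is to establish the two claims in sequence: first the displaceable case, which admits a clean quantitative proof via displacement energy, and then the general case under the hypothesis $\SH^\infty(W;\F)=0$. The common starting point is the characterization that a bar of length $\ell$ in the barcode of $\SH(W;\F)$ records a class $\alpha\in \SH^s(W;\F)$ with $\pi_{s,t}(\alpha)\neq 0$ for $s\leq t<s+\ell$ but $\pi_{s,s+\ell}(\alpha)=0$; the goal is to cap the death window uniformly in the birth level $s$.

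For the displaceable case, I would fix a compactly supported Hamiltonian $K$ on $\WW$ whose time-one flow $\phi$ displaces $W$, with Hofer norm $\|K\|\leq E$. For an admissible Hamiltonian $H$ computing $\SH^t(W;\F)$ (autonomous, linear at infinity with slope avoiding $\CS(\alpha)$), I would form a one-parameter family interpolating between $H$ and $H$ conjugated by the flow of $K$, and extract from it a continuation chain map whose action shift is bounded by $E$ via the action-energy identity. The key point is that, since $\phi$ displaces $W$, all nonconstant generators of the perturbed Floer complex can be arranged to vanish, so the continuation map $\SH^s(W;\F)\to \SH^{s+E}(W;\F)$ factors through a finite-dimensional piece identified with a shift of $\H(W;\F)$ coming from the constants, cf.\ \eqref{eq:SH-small-delta}. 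Consequently every class born strictly above the Morse level dies within window $E$, which yields $\Cbar\leq E$; note that displaceability separately forces $\SH^\infty(W;\F)=0$, so no infinite bar occurs.

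For the general case under only $\SH^\infty(W;\F)=0$, I would argue by contradiction. Assume bars of arbitrary length occur: one obtains a sequence of classes $\alpha_j\in\SH^{s_j}(W;\F)$ surviving for $L_j\to\infty$. By \ref{PM2} and the Euler characteristic constraint \eqref{eq:Euler}, only finitely many degrees can accommodate arbitrarily long bars, so after passing to a subsequence the $\alpha_j$ are confined to finitely many degrees. Combining this with continuation maps between admissible Hamiltonians of slopes tending to infinity and a telescoping/diagonal extraction, one manufactures a nonzero element of $\varinjlim_s\SH^s(W;\F)=\SH^\infty(W;\F)$, contradicting the hypothesis; the resulting $\Cbar$ then depends on $W$ and $\F$ but nothing else.

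The chief obstacle, in both parts, is the careful management of non-compactness in $\WW$: all Floer and continuation trajectories must be confined to a fixed compact region so that the maximum principle and the action-energy identity apply, which is arranged by selecting slopes outside $\CS(\alpha)$ and extending $K$ trivially past a large compact set. A subtler issue in the general case is that the diagonal extraction must produce a bona fide surviving class in the direct limit, not merely a sequence of long-lived classes; this requires careful bookkeeping of compatibility among the $\alpha_j$ under the continuation maps, and is the most delicate step of the argument.
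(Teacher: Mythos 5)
The displaceable half of your argument is roughly on track: this is Irie's lemma, recorded as \cite[Prop.\ 3.5]{GS}, which shows that displaceability with energy $E$ forces the structure map $\SH^{s}(W;\F)\to\SH^{s+E+\delta}(W;\F)$ (for $s>\delta$) to factor through the low-action part $\SH^{\delta}(W;\F)\cong\H(W;\F)[-n]$; combined with $\SH^\infty(W;\F)=0$, this caps all bar lengths by $E$. Your outline of the conjugation-by-$\phi$ and action-energy bookkeeping is the right shape, even if the details are loose.

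The general case, however, has a genuine gap, and you have half-noticed it yourself. The implication ``$\SH^\infty(W;\F)=0 \Rightarrow$ bounded boundary depth'' is simply \emph{false} for persistence modules in the abstract: the module $\bigoplus_{j\geq 1}\F_{(j^2,\,j^2+j]}$ satisfies \ref{PM1}--\ref{PM4}, has $V^\infty=0$, and yet has bars of unbounded length. Your proposed diagonal/telescoping extraction cannot manufacture a nonzero element of $\SH^\infty(W;\F)$ out of a sequence of classes $\alpha_j$ each of which dies at a finite (however large) level: in a direct limit, such classes all have image zero, and no compatibility bookkeeping can rescue the contradiction. The Euler-characteristic and degree-finiteness observations are red herrings and do not constrain bar lengths.

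What the theorem actually requires --- and this is the content of \cite[Thm.\ 3.8]{GS}, whose argument the paper notes ``carries over word-for-word to any unital ring'' --- is the filtered ring structure on symplectic homology given by the pair-of-pants product, with unit $\unit\in\SH^{\delta}(W;\F)\cong\H(W;\F)[-n]$. The hypothesis $\SH^\infty(W;\F)=0$ forces the unit to die at some finite level $\Cbar$, i.e.\ $\pi_{\delta,\Cbar}(\unit)=0$. Since the product is filtration-subadditive and unital, for any $\alpha\in\SH^s(W;\F)$ one gets $\pi_{s,\,s+\Cbar}(\alpha)=\pi_{s,\,s+\Cbar}(\unit\cdot\alpha)=\pi_{\delta,\Cbar}(\unit)\cdot\alpha=0$, so every bar has length at most $\Cbar$. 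Your argument never invokes the product structure, and without it the general case cannot be closed.
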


 \begin{Remark} Recall in this connection that $\SH^{\infty}(W;\F)$
   vanishes, for instance, whenever $W$ is displaceable in $\WW$. For
   example, $\SH^{\infty}(W)=0$ when $W$ is a star-shaped domain in
   $\WW=\R^{2n}$. In this case, vanishing of $\SH^{\infty}(W)$ is
   established in \cite{Vi}. The general case is proved in \cite{CFO}
   via Rabinowitz--Floer homology and direct proofs are given in
   \cite{Su} and~\cite{GS}; see \cite{CGG:Reeb-HZ2} for a further
     discussion.  Strictly speaking, Theorem \ref{thm:vanishing} is
   proved in \cite{GS} for $\F=\Q$, but the argument carries over
   word-for-word to any unital ring.
\end{Remark}

The second fact we will need is a variant of the Smith inequality or
Borel's localization for filtered Floer homology.

\begin{Theorem}[Smith Inequality,  \cite{Se, ShZ}]
  \label{thm:Smith}
  For every Liouville domain $W$ and any interval $I$ with end-points
  outside $\CS(\alpha)$,
\begin{equation}
  \label{eq:Smith0}
\dim\SH^{pI}(W;\F_p)\geq\dim\SH^I(W;\F_p)
\end{equation}
and, in particular,
\begin{equation}
  \label{eq:Smith01}
\dim\SH^{pt}(W;\F_p)\geq\dim
\SH^t(W;\F_p)
\end{equation}
for all $t>0$.
\end{Theorem}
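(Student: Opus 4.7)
The plan is to deduce \eqref{eq:Smith0} from a Borel--Smith localization principle applied to $\Z/p$-equivariant filtered Floer homology, where the $\Z/p$-action is the obvious rotation of the domain circle that makes the fixed-point locus of the big loop space coincide with the $p$-fold iterates. The particular statement \eqref{eq:Smith01} will then follow from \eqref{eq:Smith0} by taking $I=(-\infty,t)$ and using left-semicontinuity \ref{PM3}.

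First I would set up the two Floer complexes on equal footing. Pick an admissible Hamiltonian $H$ of small slope and an action window $I$; its Floer complex computes $\SH^I(W;\F_p)$ and is generated by $1$-periodic orbits of $H$ with action in $I$. For the target homology, instead of working with a steeper Hamiltonian, consider the $p$-fold time-rescaling $H^{(p)}(t,x):=pH(pt,x)$ (extended $p$-periodically), whose $1$-periodic orbits are exactly the $p$-fold concatenations of the $1$-periodic orbits of $H$, with action multiplied by $p$. Thus the Floer complex of $H^{(p)}$ in the action window $pI$ computes $\SH^{pI}(W;\F_p)$. The rotation $\tau\colon t\mapsto t+1/p$ on $S^1=\R/\Z$ acts on the loop space of $\WW$, preserves $H^{(p)}$, acts on its Floer complex by $\F_p$-linear chain maps, and has fixed points exactly the loops that factor through the $p$-fold cover of $S^1$, i.e.\ the orbits of the form $x^{[p]}$ coming from $H$. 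In other words, the fixed-point subcomplex $\big(CF^{pI}(H^{(p)};\F_p)\big)^{\Z/p}$ is canonically identified, as a filtered chain complex, with $CF^I(H;\F_p)[\text{shift}]$.

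Next I would build the $\Z/p$-equivariant Floer homology via the Borel construction, using a Morse model on $S^\infty$ with its free $\Z/p$-action, along the lines of \cite{Se, ShZ}. The output is an $\F_p[u]$-module (where $u$ is the generator of $H^*(B\Z/p;\F_p)$ in the appropriate degree, with $\deg u=2$ mod $p$-torsion conventions), and by construction it is strictly compatible with the action filtration, since the $S^1$-rotation preserves the symplectic action of $H^{(p)}$. The heart of the argument is the Smith/localization theorem: after inverting $u$, the localized equivariant Floer homology of $H^{(p)}$ agrees with that of the fixed-point locus, which by the identification above is computed by $CF^I(H;\F_p)$ tensored with $\F_p[u,u^{-1}]$. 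Taking ranks over $\F_p((u))$ and comparing with the spectral-sequence bound coming from the Borel fibration yields \eqref{eq:Smith0}.

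The main obstacle, and the technical core of \cite{Se, ShZ}, is achieving $\Z/p$-equivariant transversality while preserving the action filtration: the natural $\Z/p$-invariant choices of almost complex structure and Hamiltonian perturbations usually fail to be regular, so one is forced to work with equivariant perturbations of Floer's equation parametrized by $S^\infty$, verify compactness and gluing in this parametrized setting, and check that the Borel localization isomorphism holds within each action window. Once these foundational points are in place, the inequality follows at the level of Floer persistence modules, and passing to the inverse limits (or, equivalently, to the persistence module $\SH(W;\F_p)$ and its truncations) delivers \eqref{eq:Smith0} and \eqref{eq:Smith01}.
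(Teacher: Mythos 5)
The paper does not prove this theorem; it cites \cite{Se, ShZ} for the Smith inequality at the level of filtered Floer homology of a single admissible Hamiltonian $H=tr-c$ at infinity (with $t\not\in\CS(\alpha)$), and combines that with \cite[Cor.\ 3.6]{CGGM:Entropy} to pass to $\SH^I(W;\F_p)$. Your proposal attempts to sketch the cited argument, and the overall architecture you describe---time-rotation $\Z/p$-action on the Floer complex of $H^{(p)}$, identification of the $\Z/p$-fixed generators with $p$-fold iterated orbits, Borel construction over $S^\infty$, filtered localization over $\F_p[u,u^{-1}]$---is indeed the one used in those references. However, the equivariant transversality and the compatibility of Borel localization with the action filtration, which you defer as ``foundational points,'' are the substance of \cite{Se, ShZ}; setting them aside leaves an outline rather than a proof, and in a context where the paper's own treatment is a citation rather than an argument, there is nothing else to compare against.

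A few details in your setup are off. First, $H$ cannot be taken of ``small slope'': the slope at infinity must exceed $\sup I$ for $\HF^I(H;\F_p)$ to compute $\SH^I(W;\F_p)$, after which $H^{(p)}$ automatically has slope exceeding $\sup(pI)$. Second, the $1$-periodic orbits of $H^{(p)}$ are \emph{all} $p$-periodic orbits of $H$, not only the $p$-fold iterates of $1$-periodic ones; it is the $\Z/p$-\emph{fixed} generators that are exactly the iterates, so the Floer complex of $H^{(p)}$ is genuinely larger than a re-indexed copy of $\CF^I(H;\F_p)$. Third, the $\Z/p$-invariant subspace $\big(\CF^{pI}(H^{(p)};\F_p)\big)^{\Z/p}$ is, over $\F_p$, strictly larger than the span of the fixed generators, and the span of the fixed generators is not a subcomplex of the Floer complex in general since the Floer differential need not map fixed generators into their span; the correct statement is a localization isomorphism for (a filtered refinement of) Tate cohomology rather than a literal identification with a fixed-point subcomplex. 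None of these slips derails the high-level picture, but each would have to be repaired in a careful treatment, and together they suggest the Floer-theoretic setup needs more precision than the sketch provides.
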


These inequalities were originally proved for the Floer homology of
any Hamiltonian $H$ on $\WW$ of the form $H=tr-c$ outside a compact
set with $t\not\in\CS(\alpha)$, \cite{Se, ShZ}. The version stated
here readily follows from those results combined with the fact that
for any fixed interval $I$ with end-points outside $\CS(\alpha)$ and a
fixed prime $p$, we have $\SH^{I}(W;\F_p)=\HF^I(H;\F_p)$ for a
suitable choice of $H$; see \cite[Sect.\ 3]{CGGM:Inv}. For instance,
for $I=(-\infty,t)$ we can take any $H$ such that $H=tr-c$ at
infinity. Then, $\SH^{I}(W;\F_p)=\HF(H;\F_p)$ by \cite[Cor.\
3.7]{CGGM:Entropy}, which implies~\eqref{eq:Smith01}.

Filtered symplectic homology has an $S^1$-equivariant counterpart,
which we denote by $\COH^I(W;\F)$. (In fact, the construction carries
over to any ring and, in particular, $\Z$.) Again we refer the reader
to, e.g., \cite{BO:Gysin, Vi} for definitions; here we closely follow
\cite{GG:LS}. As above, let us set
$$
\COH^t(W;\F):=\COH^{(-\infty,t)}(W;\F),
$$
denoting the resulting persistence module by $\COH(W;\F)$, and also
$$
\COH^+(W;\F):=\COH^{[\delta,\infty)}(W;\F),
$$
where $\delta>0$ is sufficiently small. (Namely,
$\delta<\inf\CS(\alpha)$.) Then, when $W$ is a star-shaped domain,
\begin{equation}
  \label{eq:CH}
  \dim\COH_m^+(W;\F)=
  \begin{cases}
    1\textrm{ when } m=n+1+2i,\, i\geq 0,\\
    0\textrm{ otherwise.}
  \end{cases}
\end{equation}

When the Reeb flow is non-degenerate, for any interval
$I\subset (0,\infty)$ with end-points outside $\CS(\alpha)$, the space
$\COH^I(W;\F)$ is the homology of a certain complex generated by
closed Reeb orbits with action in $I$ and graded by the
Conley--Zehnder index; see Remark \ref{rmk:COH-complex} and
\cite[Sec.\ 2.5]{GG:LS}. (If $\alpha$ is degenerate and the end-points
of $I$ are outside $\CS(\alpha)$, we can first take a small
non-degenerate perturbation of $\alpha$ and then apply this
construction.)

\begin{Remark}
  \label{rmk:coeff}
  It is worth keeping in mind that this description does not carry
  over to a field of positive characteristic $p \ge 2$. The reason is
  that $x^p$ can contribute infinitely many generators to the complex
  over $\F_p$. For instance, if $x^p$ is non-degenerate and the only
  orbit with action in $I$ and $x$ is prime, $\COH^I(W;\F_p)$ is
  isomorphic to the homology of the infinite-dimensional lens space
  $S^{2\infty-1}/\Z_p$ over $\F_p$. Overall, the properties of
    the filtered homology $\COH^t(W;\F)$ depend significantly on
    $\charr\F$; see \cite{CGG:Reeb-HZ2}.
\end{Remark}

Finally, recall from \cite{BO:Gysin} that the equivariant and
non-equivariant symplectic homology of $W$ fit together into the Gysin
exact sequence
\[
\cdots \to \SH_m^I(W;\F)\to \COH^I_m(W;\F)
\stackrel{\DD}{\longrightarrow}  \COH^I_{m-2}(W;\F)\to
\SH^I_{m-1}(W;\F)\to 
\cdots .
\]
While the construction of symplectic homology in \cite{BO:Gysin} uses
$\Q$ as the coefficient field and a semi-infinite interval as $I$, it
carries over word-for-word to $\Z$, and hence any ring, and an
arbitrary interval $I$; see, e.g., \cite[Sect.\ 2]{GG:LS} for a
detailed discussion of $\DD$ and \cite{Abu} for a non-equivariant
construction over $\Z$.

\subsection{Local symplectic homology}
\label{sec:LSH}
Throughout this section all closed Reeb orbits, either prime and
iterated, are required to be isolated even when this is not explicitly
stated.  The construction of symplectic homology, equivariant or
non-equivariant, has a local counterpart. In this section we review
this construction and the properties of local symplectic homology and
then discuss its relation with filtered symplectic homology. While
some of the definitions and results are standard and well-known to the
experts, there are also several new observations.

\subsubsection{Basic definitions and facts}
\label{sec:local-def}
Let $x$ be an isolated closed Reeb orbit, not necessarily prime. Fix
an isolating neighborhood $U$ of $x(\R)$ in $M$. (By definition, $U$
is a neighborhood of $x$ containing no other closed orbit of period
close to $\CA(x)$.) Consider the product
$\UU=U\times (1-\eps,1+\eps)\subset \WW$ and let $H=h(r)$ be a
Hamiltonian such that $h'(1)=\CA(x)$ and $h''(1)>0$. Then $x$ gives
rise to an isolated set $S$ of 1-periodic orbits of $H$ with initial
conditions on $x(\R)$. By definition, the symplectic homology
$\SH(x;\F)$ is the local Floer homology $\HF(H;\F)$ of $H$ at $S$, and
$\COH(x;\F)$ is the $S^1$-equivariant local Floer homology.

Specifically, let $F$ be a non-degenerate 1-periodic in time
perturbation of $H$ and $J$ a 1-periodic in time almost complex
structure. When $F$ is sufficiently $C^2$-close to $H$, all 1-periodic
orbits of the flow of $F$ that $x$ splits into and Floer cylinders
between them are contained in $\UU$. (This is a variant of Gromov's
compactness, which follows, for instance, from the estimates in
\cite[Sec.\ 1.5]{Sa}.) For a generic choice of $J$ the regularity
conditions are satisfied. Hence we obtain a complex and $\SH(x;\F)$ is
the homology of this complex. The equivariant version is defined in a
similar fashion with appropriate modifications; see \cite[Sec.\
2.3]{GG:LS}.

This type of homology is the Floer-theoretic counterpart of critical
modules from \cite{GrMe1, GrMe2} and its definition goes back to
\cite{Fl1, Fl2}. The (non-equivariant) variant for isolated periodic
orbits of Hamiltonian diffeomorphisms is studied in \cite{Gi:CC,
  GG:gap}.  We refer the reader to \cite{Fe, Fe:thesis, GG:LS, McL}
for details in the Reeb case.

Similarly to the filtered homology, $\SH(x;\F)$ and $\COH(x;\Q)$ can
be more explicitly described as follows. Consider a small
non-degenerate perturbation of $\alpha$. Then $x$ splits into a finite
collection of non-degenerate orbits $x_i$. Essentially by definition,
$\SH(x;\F)$ is the homology of a certain complex with generators
$\cx_i$ and $\hx_i$ of degrees $\mu(x_i)$ and, respectively,
$\mu(x_i)+1$ and the differential coming from the Floer--Morse--Bott
construction as in \cite{Bo, BO0} and also, e.g.,
\cite{CGGM:Inv}. Likewise, $\COH(x;\Q)$ is the homology of a complex
over $\Q$ generated by the orbits $x_i$ and graded by the
Conley--Zehnder index; see \cite[Sec.\ 3]{GG:LS}. This is no longer
literally true when $\Q$ is replaced by a field of positive
characteristic; see Remark \ref{rmk:coeff}.

The absolute grading of the local symplectic homology is determined by
fixing a symplectic trivialization of the contact structure along
$x$. (Note that such a trivialization gives rise to trivializations
along all closed orbits $x_i$ that $x$ splits into.) Below we will
always assume that this trivialization agrees (up to homotopy) with
the trivialization used in the construction of the filtered symplectic
homology; see Section \ref{sec:CZ-orbits}.

\begin{Remark}
  \label{rmk:local}
  As is clear from these definitions, $\SH(x;\F)$ and $\COH(x;\F)$ are
  defined even for germs of contact structures near $x(\R)$ as long as
  $x$ is an isolated closed Reeb orbit for the germ. The absolute
  grading is again determined by the choice of a trivialization of the
  contact structure along $x$.
\end{Remark}

The support of $\SH(x;\F)$, denoted by $\supp\SH(x;\F)$, is the
collection of degrees $m$ such that $\SH_m(x;\F)\neq 0$. The support
$\supp\COH(x;\F)$ is defined in a similar fashion. It readily follows
from the definitions that
\begin{equation}
  \label{eq:support1}
  \supp \SH(x;\F)\subseteq [\mu_-(x),\mu_+(x)+1]
  \subseteq [\hmu(x)-n+1,\hmu(x)+n]
\end{equation}
and
\begin{equation}
  \label{eq:support2}
  \supp \COH(x;\Q)\subseteq [\mu_-(x),\mu_+(x)]
  \subseteq [\hmu(x)-n+1,\hmu(x)+n-1];
\end{equation}
see \cite[Prop.\ 2.20]{GG:LS}. Here $\hmu$ is the mean index 
and $\mu_\pm$ are the lower and upper semi-continuous extensions of
the Conley--Zehnder index; see Section \ref{sec:CZ-LA}.

For instance, when $x$ is non-degenerate, $\SH_m(x;\F)$ is $\F$ in
degrees $\mu(x)$ and $\mu(x)+1$ and zero in all other degrees. When in
addition $x$ is good, $\COH_m(x;\Q)=\Q$ for $m=\mu(x)$ and zero
otherwise. If $x$ is bad, $\COH(x;\Q)=0$.

Furthermore, recall from \cite{GG:LS} that equivariant and
non-equivariant local symplectic homology groups fit into the Gysin
exact sequence
$$
\cdots \to \SH_m(x;\F)\to \COH_m(x;\F) \stackrel{\DD}{\longrightarrow}
\COH_{m-2}(x;\F)\to \SH_{m-1}(x;\F)\to \cdots .
$$
This sequence is just a local variant of the Gysin sequence from \cite
{BO:Gysin}. Moreover, by \cite[Cor.\ 3.7]{GG:LS}, the shift operator
$\DD$ vanishes in local symplectic homology when $\F=\Q$, and the
Gysin sequence splits into short exact sequences. (This is no longer
true when $\charr \F > 0$ unless $\charr \F$ and the order of
iteration of $x$ are relatively prime.)  As a consequence,
\begin{equation}
  \label{eq:SH-CH}
\SH(x;\Q)=\COH(x;\Q)\oplus \COH(x;\Q)[-1] .
\end{equation}

As was mentioned above, a closed orbit $x$ is said to be
\emph{$\F$-visible} if $\SH(x;\F)\neq 0$. Likewise, $x$ is
\emph{equivariantly $\F$-visible} if $\COH(x;\F)\neq 0$. An
$\F$-visible closed orbit is automatically equivariantly $\F$-visible
by the Gysin sequence. The converse is also true when $\F=\Q$ due to
\eqref{eq:SH-CH}; see \cite{CGG:Reeb-HZ2} for a further discussion.

It is clear from this construction that the Euler characteristic of
$\SH(x;\F)$ is always zero for any field $\F$ and any closed Reeb
orbit $x$, i.e.,
\begin{equation}
  \label{eq:chi0}
\chi(x;\F):=\sum_m (-1)^m\dim\SH_m(x;\F)=0.
\end{equation}
Indeed, as stated in the beginning of this section $\SH(x;\F)$ is the
homology of a complex with generators coming in pairs $\cx_i$ and
$\hx_i$ where the degree of $\cx_i$ is $\mu(x_i)$ and the degree of
$\hx_i$ is $\mu(x_i)+1$. Therefore,
$$
\dim\SH(x;\F) \textrm{ is always even,}
$$
and so 
\begin{equation}
  \label{eq:dimgeq2}
  \dim\SH(x;\F)\geq 2 \textrm{ when $x$ is $\F$-visible.}
\end{equation}

The \emph{equivariant Euler characteristic} of an orbit $x$ is defined
as
\begin{equation}
  \label{eq:chieq}
  \chieq(x)=\sum_m (-1)^m\dim\COH_m(x;\Q).
\end{equation}
For instance, when $x$ is non-degenerate, $\chieq(x)$ is either
$(-1)^{\mu(x)}$ or $0$ depending on whether $x$ is good or bad. We
note that here we restrict our background field $\F$ to $\Q$ because
otherwise $\supp\COH(x;\F)$ can be infinite and then the right-hand
side of \eqref{eq:chieq} is not defined.

It readily follows from the description of $\COH(x;\Q)$ in Section
\ref{sec:local-def} that, whenever $x$ is prime, $\chieq(x)$ is equal
to the Hopf index of the Poincar\'e return map of $x$ up to the sign
$(-1)^n$. This is no longer true when $x$ is iterated. For instance,
$\chieq(x)=0$ when $x$ is non-degenerate and bad, while the Hopf index
is $\pm 1$. Moreover, let $x=z^k$ where $z$ is prime and, for the sake
of simplicity, $k$ is odd. Then $\chieq(x)$ counts the number (with
signs) of $k$-periodic orbits of a small non-degenerate perturbation
of the return map $\varphi$ of $z$, while the Hopf index of
$\varphi^k$ counts (with the same signs) the number of $k$-periodic
points.

\subsubsection{Behavior under iterations}
\label{sec:iterates}
Recall that a positive integer $k$ and the iterate $x^k$ are said to
be \emph{admissible} for $x$ if $k$ is not divisible by the degree of
any root of unity other than $1$ among the eigenvalues of the
linearized Poincar\'e return map of $x$. For instance, all $k\in\N$
are admissible when $x$ is \emph{totally degenerate}, i.e., all
eigenvalues are equal to 1.

We will need the following simple fact which is conceptually closely
related to the Shub--Sullivan Theorem and its proof; see \cite{SS}.

\begin{Lemma}
  \label{lemma:chieq-it}
  Let $x$ be an isolated (not necessarily prime) closed Reeb orbit.
  Assume that $k$ is admissible for $x$ and odd if the number of
  eigenvalues of the return map of $x$ in $(-1,0)$ is odd. Then
\begin{equation}
  \label{eq:chieq-it}
  \chieq\big(x^k\big)=\chieq(x).
\end{equation}
In particular, $x^k$ is $\Q$-visible when $\chieq(x)\neq 0$.
\end{Lemma}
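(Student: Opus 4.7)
The strategy is to compute both sides of \eqref{eq:chieq-it} by passing to a small non-degenerate perturbation $\alpha_\eps$ of the contact form $\alpha$. Let $\varphi$ denote the linearized Poincar\'e return map of $x$ at a point $p\in x(\R)$. Under $\alpha_\eps$, the isolated orbit $x$ splits into finitely many non-degenerate closed orbits $x_1,\ldots,x_r$ of period close to $\CA(x)$, corresponding to non-degenerate fixed points $p_i$ of the perturbed return map $\varphi_\eps$ near $p$. The Morse--Bott description of $\COH(\cdot;\Q)$ recalled in Section~\ref{sec:local-def} then gives
\begin{equation*}
\chieq(x)=\sum_{i=1}^r\chieq(x_i).
\end{equation*}

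The admissibility hypothesis is equivalent to the assertion that no eigenvalue of $d\varphi(p)$ is a $k$-th root of unity, equivalently that $d\varphi(p)^k-\id$ is non-singular. This has two consequences once $\alpha_\eps$ is sufficiently close to $\alpha$. First, each iterate $x_i^k$ is non-degenerate, since the eigenvalues of $d\varphi_\eps^k(p_i)$ remain bounded away from $1$. Second, by a standard Shub--Sullivan type argument---the implicit function theorem applied to $\varphi_\eps^k-\id$ on a fixed isolating neighborhood of $p$---every $k$-periodic point of $\varphi_\eps$ in that neighborhood is already a fixed point of $\varphi_\eps$. Consequently $x^k$ splits under $\alpha_\eps$ into precisely the iterates $x_1^k,\ldots,x_r^k$, and so
\begin{equation*}
\chieq(x^k)=\sum_{i=1}^r\chieq(x_i^k).
\end{equation*}

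It remains to verify the term-by-term identity $\chieq(x_i^k)=\chieq(x_i)$. For $\alpha_\eps$ close enough to $\alpha$, the spectrum of $d\varphi_\eps(p_i)$ is close to that of $d\varphi(p)$ for every $i$: eigenvalues lying in $(-1,0)$ perturb to eigenvalues in $(-1,0)$ with the same total multiplicity, while no eigenvalues from outside $(-1,0)$ can enter this interval. Hence the number of eigenvalues in $(-1,0)$ of $d\varphi_\eps(p_i)$ agrees with that of $d\varphi(p)$, so the parity hypothesis on $k$ transfers to every $x_i$. If this number is even then each $x_i$ is non-alternating, every iterate $x_i^k$ is good, and the standard iteration formula for the Conley--Zehnder index yields $\mu(x_i^k)\equiv\mu(x_i)\pmod 2$. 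If the number is odd then $k$ must be odd, so $x_i^k$ is an odd iterate of an alternating orbit---hence still good---and the same iteration formula again gives $\mu(x_i^k)\equiv\mu(x_i)\pmod 2$. In either case $\chieq(x_i^k)=(-1)^{\mu(x_i^k)}=(-1)^{\mu(x_i)}=\chieq(x_i)$, which proves \eqref{eq:chieq-it}; the final assertion follows immediately since $\chieq(x^k)=\chieq(x)\neq 0$ forces $\COH(x^k;\Q)\neq 0$. The main technical obstacle is the Shub--Sullivan isolation step, i.e., verifying that admissibility of $k$ is enough to rule out extraneous $k$-periodic orbits of $\varphi_\eps$ under small perturbation; once this is in place, the remaining Conley--Zehnder parity bookkeeping is routine.
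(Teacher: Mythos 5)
Your proposal follows the same overall route as the paper's proof: perturb the contact form, use the Shub--Sullivan / implicit function theorem isolation argument to show $x$ and $x^k$ split precisely into $x_i$ and $x_i^k$, and reduce \eqref{eq:chieq-it} to a term-by-term parity comparison. However, the parity bookkeeping at the end has two gaps. First, the claim that the number of eigenvalues of $d\varphi_\eps(p_i)$ in $(-1,0)$ always agrees with that of $d\varphi(p)$ is false when $-1$ is an eigenvalue of $d\varphi(p)$: under an arbitrarily small perturbation an eigenvalue at $-1$ can split into a reciprocal pair $\lambda,1/\lambda$ with $\lambda\in(-1,0)$, changing both the count and its parity, so the lemma's hypothesis on $k$ does not ``transfer to every $x_i$''. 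This case must be treated separately, and the right observation is much simpler: admissibility of $k$ already forces $k$ to be odd when $-1\in\operatorname{spec}d\varphi(p)$ (since $-1$ is a root of unity of degree $2$), and for odd $k$ one needs no eigenvalue-count comparison at all.

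Second, the assertion ``each $x_i$ is non-alternating, [so] every iterate $x_i^k$ is good'' is incorrect when $x_i$ is not prime, which can certainly occur because the lemma allows $x$ to be iterated. If $x_i=z^r$ with $z$ prime and alternating and $r$ even, then $x_i$ has an \emph{even} count of eigenvalues in $(-1,0)$ and yet $x_i$ and $x_i^k$ are both \emph{bad}; your chain $\chieq(x_i^k)=(-1)^{\mu(x_i^k)}=(-1)^{\mu(x_i)}=\chieq(x_i)$ then fails at the outer equalities (both sides are actually $0$, so the identity survives, but not for the reason you give). What is really needed, and what the paper proves, is that $x_i$ and $x_i^k$ are good or bad \emph{simultaneously}; this follows from the parity statement $\mu(x_i)\equiv\mu(x_i^k)\pmod 2$ by writing $x_i=z^r$, $x_i^k=z^{rk}$ with $z$ prime and comparing both Conley--Zehnder indices to $\mu(z)$ modulo $2$. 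That step is missing from your argument. Both gaps are local and fixable without changing your strategy, but as written the proof is incomplete.
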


\begin{proof}
  Fix $k$ and consider a $C^2$-small non-degenerate perturbation of
  the background contact form. Under this perturbation $x$ splits into
  strongly non-degenerate closed orbits $x_i$. The key fact is that
  $x_i^k$ are the only closed orbits of the perturbed flow in a
  neighborhood of $x$ with period close to $k\CA(x)$, provided that
  $k$ is admissible and the perturbation is sufficiently small
  (depending on $k$). This follows for instance from \cite[Prop.\
  1.1]{GG:gap}.

  We claim that $x_i^k$ is good (bad) if and only if so is $x_i$, and
  that $\mu(x_i)$ and $\mu\big(x^k_i\big)$ have the same parity. When
  $k$ is odd this is obvious. Indeed, the eigenvalues in $(-1,0)$
  remain in that range for all iterates and additional eigenvalues
  come in pairs, which implies the parity statement. Likewise, an odd
  iterate of a good (bad) closed orbit is automatically good (bad).
  If $-1$ is an eigenvalue of the return map of $x$, the iteration $k$
  is necessarily odd, since $k$ is admissible, and the claim follows.
  Assume that $-1$ is not an eigenvalue and $k$ is even.  Then for
  each $x_i$ the parity of the number of real eigenvalues in $(-1,0)$
  is the same as for $x$ and, by the conditions of the lemma, even.
  Hence, in this case, $\mu(x_i)$ and $\mu\big(x^k_i\big)$ also have
  the same parity. Furthermore, if $x_i=z^r$ where $z$ is prime, $x_i$
  is good (bad) if and only $\mu(x_i)$ and $\mu(z)$ have the same
  (opposite) parity. This is also true for $x_i^k=z^{kr}$. As a
  consequence $x_i$ and $x_i^k$ are good (bad) simultaneously.

  As described in Section \ref{sec:local-def}, $\COH(x;\Q)$ is the
  homology of a certain complex generated by good closed orbits $x_i$,
  and $\COH\big(x^k;\Q\big)$ is the homology of a complex generated by
  good $x_i^k$. In both cases the complex is graded by the
  Conley--Zehnder index. Therefore,
  $$
  \chieq\big(x^k\big)=\sum_{\text{good } x_i^k}
  (-1)^{\mu(x^k_i)}=\sum_{\text{good } x_i} (-1)^{\mu(x_i)}=\chieq(x).
  $$
  \end{proof}

  Next, for an isolating neighborhood $U$ of a closed Reeb orbit $x$,
  consider the $k$-fold covering $\pi\colon U_k\to U$ and the
  pull-back of the contact form from $U$ to $U_k$. This pull-back and
  hence the Reeb vector field are invariant under the action of $\Z_k$
  on $U_k$ by deck transformations. Then $\pi^{-1}(x(\R))$ is an
  isolated closed Reeb orbit of the lift, which we denote by $kx$,
  provided that $x^k$ is isolated. The period (action) of $kx$ is
  $k\CA(x)$. By Remark \ref{rmk:local}, we have the local symplectic
  homology $\SH(kx;\F)$ and $\COH(kx;\F)$ of $kx$ defined.

  \begin{Lemma}
    \label{lemma:iterates}
    There is a natural $\Z_k$-action on $\SH(kx;\F)$ and
    $\COH(kx;\F)$, and
    \begin{equation}
      \label{eq:iterates}
      \SH\big(x^k;\Q\big)=\SH(kx;\Q)^{\Z_k}\textrm{ and }
      \COH\big(x^k;\Q\big)=\COH(kx;\Q)^{\Z_k},  
    \end{equation}
    where the superscript refers to the $\Z_k$-invariant part of the
    homology.
\end{Lemma}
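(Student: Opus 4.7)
The plan is to realize both $\SH(kx;\F)$ and $\SH(x^k;\F)$ via Floer complexes where the one upstairs (on the $k$-fold cover) is built from pulled-back data, identify it with the downstairs complex tensored with the regular representation of $\Z_k$, and extract the equality by averaging in characteristic zero. The deck group $\Z_k$ of $\pi\colon U_k\to U$ acts on $U_k$ by strict contactomorphisms of $(U_k,\tilde\alpha)$ preserving $kx(\R)$. Choose admissible Floer data on a neighborhood $U\times(1-\eps,1+\eps)$ of $x^k$: a Hamiltonian $H=h(r)$ with $h'(1)=k\CA(x)$, a small non-degenerate time-dependent perturbation $F$ of $H$, and a compatible almost complex structure $J$. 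Pulling back via $\pi$ yields $\Z_k$-invariant Floer data $(\tilde H,\tilde F,\tilde J)$ on the corresponding neighborhood of $kx$, so the deck action induces an automorphism of the local Floer complex near $kx$ commuting with the differential and, in the equivariant setting, with the $S^1$-reparametrization action; this defines the sought $\Z_k$-actions on $\SH(kx;\F)$ and $\COH(kx;\F)$.

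Next, any 1-periodic orbit $y$ of $F$ close to $x^k$ represents the class $k\in\pi_1(U)\cong\Z$, which lies in $\pi_*\pi_1(U_k)=k\Z$ and acts trivially on the fiber $\pi^{-1}(y(0))\cong\Z_k$; hence $y$ admits exactly $k$ distinct closed lifts to $U_k$, freely permuted by $\Z_k$, and conversely every 1-periodic orbit of $\tilde F$ near $kx$ is such a lift. The identical dichotomy applies to Floer cylinders: each cylinder upstairs projects to one downstairs, and each cylinder downstairs has exactly $k$ lifts. Combined with compatibility of Conley--Zehnder indices under pullback (for the trivializations fixed in Section~\ref{sec:CZ-orbits}), this yields a $\Z_k$-equivariant isomorphism of graded chain complexes
\[
\CF_*(\tilde F;\F)\ \cong\ \CF_*(F;\F)\otimes_{\F}\F[\Z_k],
\]
where $\Z_k$ acts on the second factor by the regular representation.

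Over $\Q$, the averaging idempotent $e=\tfrac{1}{k}\sum_{\sigma\in\Z_k}\sigma\in\Q[\Z_k]$ is well defined, taking $\Z_k$-invariants is an exact functor, and $\Q[\Z_k]^{\Z_k}=\Q$. Restricting the isomorphism above to $\Z_k$-invariants therefore gives $\CF_*(\tilde F;\Q)^{\Z_k}\cong\CF_*(F;\Q)$, and passing to homology yields the first identity in \eqref{eq:iterates}. The equivariant statement follows identically: the $\Z_k$-action on $U_k$ commutes with the auxiliary $S^1$-reparametrization action entering the definition of $\COH$, so the averaging argument carries through in the Borel-type model and produces $\COH(x^k;\Q)\cong\COH(kx;\Q)^{\Z_k}$.

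The main technical point is achieving Floer transversality simultaneously on $U$ and $\Z_k$-equivariantly on its cover. Since $\pi$ is unramified, $\Z_k$ acts freely on $U_k$, so generic transversality on $U$ automatically lifts to equivariant transversality on $U_k$; no separate equivariant perturbation scheme is required. A secondary point, matching Conley--Zehnder indices across the covering, is immediate once a symplectic trivialization of $\xi$ along $x$ is pulled back to one along $kx$ of the same homotopy type.
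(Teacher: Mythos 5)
Your proof follows essentially the same route as the paper's: pull back the Floer data to the $k$-fold cover, observe the $k$-to-$1$ correspondence between orbits and Floer cylinders that is freely permuted by $\Z_k$, and average over $\Q$ to identify the invariant subcomplex with the downstairs complex. Two small caveats worth noting: the paper justifies regularity of the lifted data not by appealing to equivariant transversality abstractly but by explicitly intertwining the linearized Cauchy--Riemann operators $D_u$ and $D_{\pi u}$ via the canonical isomorphism $u^*T\UU_k\cong(\pi u)^*T\UU$, which is the content behind your "automatic" lifting of transversality; and the asserted chain-level isomorphism $\CF_*(\tilde F;\F)\cong\CF_*(F;\F)\otimes_\F\F[\Z_k]$ should be read only as a $\Z_k$-equivariant isomorphism of graded $\F[\Z_k]$-modules, since a lifted cylinder may shift the deck label of its endpoints and the differential therefore need not factor as $d\otimes 1$ --- this does not affect the conclusion, because the projection and averaging maps are still chain maps and taking $\Z_k$-invariants over $\Q$ is exact, which is exactly how the paper phrases the final step.
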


Note that here the orbit $x$ need not be prime and $k$ is not required
to be admissible. However, $x^k$ must be isolated. Furthermore, the
condition that $\F=\Q$ in \eqref{eq:iterates} can be relaxed. It would
be sufficient to assume that $\charr \F$ and $k$ are relatively prime.

\begin{proof}
  The argument is quite standard and we only briefly outline it.  For
  the sake of brevity, we focus on the non-equivariant case; for the
  equivariant version is handled in a similar fashion.

  In the notation from Section \ref{sec:local-def}, let
  $H=h(r)\colon \UU\to \R$ be a Hamiltonian from the construction of
  $\SH\big(x^k;\F\big):=\HF(H;\F)$. Fix a non-degenerate 1-periodic in
  time perturbation $F$ of $H$ and a regular 1-periodic almost complex
  structure $J$ on $\UU$. Thus, in particular,
  $\HF(H;\F)=\HF(F;\F)$. Denote by $H_k$, $F_k$ and $J_k$ the
  pull-backs of the Hamiltonians and $J$ to the $k$-fold covering
  $\pi\colon\UU_k\to\UU$. Then
  $\SH(kx;\F):=\HF(H_k;\F)=\HF(F_k;\F)$. These pull-backs are
  invariant under the $\Z_k$-action on $\UU_k$ by deck
  transformations. The 1-periodic orbits of $F_k$ project to the
  periodic orbits of $F$, and so do Floer cylinders.  Moreover, on the
  sets of 1-periodic orbits and Floer cylinders these projections are
  onto and $k$-to-1. To be more precise, since all 1-periodic orbits
  of $F$ are in the free homotopy class $k[S^1]$, the Hamiltonian
  $F_k$ has exactly $k$ distinct 1-periodic orbits lying above every
  1-periodic orbit of $F$, and deck transformations act freely on
  these orbits. It follows that the same is true for Floer cylinders;
  cf.\ \cite[Prop.\ 1.21]{Sa}.

  We claim that the pair $(F_k,J_k)$ is regular if and only if the
  pair $(F,J)$ is regular; cf.\ \cite[Prop.\ 5.13]{KS}. Indeed, let
  $u$ be a Floer cylinder in $\UU_k$. Then the pull-backs $u^*T\UU_k$
  and $(\pi u)^*T\UU$ are naturally isomorphic. (This is true whenever
  $\pi$ is a local diffeomorphism.) Hence, we obtain an isomorphism
  between the Sobolev spaces of sections of these bundles; see
  \cite[Sec.\ 2]{Sa} for the definitions. By construction, this
  isomorphism intertwines the linearized perturbed Cauchy--Riemann
  operators $D_u$ along $u$ and $D_{\pi u}$ along $\pi u$. Therefore,
  $D_u$ is onto if and only if $D_{\pi u}$ is onto. Finally, since
  every Floer cylinder in $\UU$ has a lift to $\UU_k$, we conclude
  that the pairs $(F,J)$ and $(F_k,J_k)$ are regular simultaneously.
  
  As a consequence, $\Z_k$ acts on the Floer complex $\CF(H_k,J_k)$ of
  $(H_k,J_k)$ and hence on $\SH(kx;\F)=\HF(F_k;\F)$. The standard
  continuation argument shows that this action is independent of the
  auxiliary choices made in the construction.

  When $\F=\Q$, the Floer complex $\CF(F,J)$ is naturally isomorphic
  to the invariant part $\CF(H_k,J_k)^{\Z_k}$ of
  $\CF(H_k,J_k)$. Indeed, in one direction we have (for any $\F)$ the
  homomorphism of complexes $\CF(H_k,J_k)\to \CF(F,J)$ induced by the
  projection $\pi\colon\UU_k\to\UU$. In the opposite direction, we
  have the averaging map. Namely, for $z\in\CF(F,J)$, pick an
  arbitrary lift $z'\in \CF(H_k,J_k)$ and set, in the obvious
  notation,
  $$
  z\mapsto \frac{1}{k}\sum_{g\in\Z_k} g z'\in \CF(H_k,J_k)^{\Z_k}.
  $$
  This map gives rise to an isomorphism
  $\CF(F,J)\cong \CF(H_k,J_k)^{\Z_k}$, which concludes the proof of
  \eqref{eq:iterates}.
\end{proof}

Local symplectic homology of $x$ over $\Q$ is intimately connected to
the local Floer homology $\HF(\varphi; \Q)$ of the return map
$\varphi$, although this connection is not straightforward. When $x$
is prime,
\begin{equation}
  \label{eq:Fe}
  \COH(x;\Q)=\HF(\varphi;\Q)\textrm{ and } 
  \SH(x;\Q)=\HF(\varphi;\Q)\oplus \HF(\varphi;\Q)[-1]
\end{equation}
as is shown in \cite{Fe,Fe:thesis}. (These two identifications are
equivalent due to \eqref{eq:SH-CH}.) Applying this to $kx$, which is
prime whenever $x$ is prime and using \eqref{eq:iterates}, we conclude
that
\begin{equation}
    \label{eq:McL}
    \dim\SH_m\big(x^k;\Q\big)\leq \dim\HF_m\big(\varphi^k;\Q\big)
    + \dim\HF_{m-1}\big(\varphi^k;\Q\big)
\end{equation}
for all $k\in\N$, provided that $x$ is prime and $x^k$ is isolated. A
closely related inequality is established in \cite[Lemma
3.4]{McL}. The inequalities in \eqref{eq:McL} can be strict. This is
the case, for instance, when $x$ is non-degenerate and alternating and
$k$ is even; see also Examples \ref{exam:iterates} and
\ref{exam:iterates2} below.

Conjecturally,
$$
\COH\big(x^k;\Q\big)=\HF\big(\varphi^k;\Q\big) ^{\Z_k}
$$
and
$$
\SH\big(x^k;\Q\big)=\HF\big(\varphi^k;\Q\big) ^{\Z_k}\oplus
\HF\big(\varphi^k;\Q\big) ^{\Z_k}[-1]
$$
for all $k\in\N$; see \cite{GHHM}. Lemma \ref{lemma:iterates} together
with \eqref{eq:Fe} almost proves these identifications. The missing
part is that the isomorphisms in \eqref{eq:Fe} commute with the
$\Z_k$-action when applied to $kx$.

\begin{Remark}  
  When the orbit $x$ is prime, the local cylindrical contact homology
  of $x$ is defined and isomorphic to $\HF(\varphi;\Q)$; see
  \cite{HM}. (Alternatively, one can use here a local version of
  \cite[Prop.\ 4.30]{EKP}.) Furthermore, localizing the isomorphism
  between the equivariant symplectic and cylindrical contact homology
  from \cite{BO} we see that the local cylindrical contact homology of
  $x$ is isomorphic to $\COH(x;\Q)$, assuming again that $x$ is
  prime. (Hence the notation.) This gives another proof of the
  isomorphisms in \eqref{eq:Fe}, albeit quite indirect.
\end{Remark}

  \begin{Remark}
    \label{rmk:iterates1}
    The behavior under iterations of local symplectic or Floer
    homology (equivariant or not) is complicated and overall poorly
    understood. For instance, it is not known if a closed orbit can go
    from invisible to visible under iterations, i.e., if a closed
    orbit $x^k$ can be $\F$-visible when $x$ is prime and
    $\F$-invisible, without additional assumptions on $x$, the ground
    field $\F$ or $k$. At the same time, $x^k$ can in general become
    $\F$-invisible when $x$ is prime and $\F$-visible. A simple
    situation when this happens is when $x$ is non-degenerate and
    alternating, $k$ is even and $\F=\Q$. In Examples
    \ref{exam:iterates} and \ref{exam:iterates2} we describe more
    interesting cases of this phenomenon.  Lemma \ref{lemma:chieq-it}
    provides a simple non-vanishing criterion, which is sufficient for
    our purposes. (The question for a variant of local symplectic
    homology and admissible iterations has been investigated in
    \cite{HHM}. The homology there is defined via generating functions
    and is hypothetically isomorphic to the local homology used here;
    cf.\ Remark \ref{rmk:iterates2}.)
\end{Remark}

\begin{Example}[Vanishing of iterated symplectic homology]
\label{exam:iterates}
Let $\psi\colon \R^2=\C\to\C=\R^2$ be the time-one flow of the
degree-three monkey saddle Hamiltonian $H=\Re (z^3)$ and let
$R_{2\pi/3}$ be the rotation of $\R^2$ counterclockwise by
$2\pi/3$. (We view $R_{2\pi/3}$ as an element of the universal
covering $\tSp(2)$.) Note that $R_{2\pi/3}$ and the flow of $H$
commute. Set $\varphi=R_{2\pi/3}\psi$ and let $x$ be a closed Reeb
orbit with the return map $\varphi$. The orbit $x$ is non-degenerate,
$\COH(x;\Q)=\Q$ and for a suitable trivialization $\COH(x;\Q)$ is
supported in degree one. The return map of $x^3$ is $\varphi^3$, which
is the time-three flow of $H$.  One can show that
$\COH(x^3;\Q)=0$. (This is not obvious.)  However, $\HF(\varphi^3;\Q)$
is $\Q^2$ and supported in degree zero. This gives another example
where the inequality in \eqref{eq:McL} is strict; see also Example
\ref{exam:iterates2} below.
\end{Example}

\begin{Example}
  \label{exam:iterates2}
  The notion of a (non-)alternating closed orbit is not particularly
  useful in the setting where $x$ is not strongly non-degenerate and
  the return map $\varphi$ has roots of unity of even degree among its
  eigenvalues. The reason is that in this case the behavior of the
  local symplectic homology is not determined by the eigenvalues and
  depends on higher order terms. For instance, in the spirit of
  Example \ref{exam:iterates}, let $R_\pi\colon \R^2\to \R^2$ be the
  counterclockwise rotation by $\pi$, i.e., central symmetry, viewed
  as an element of $\tSp(2)$. Consider the Hamiltonians $H_0=p^4-q^4$
  and $H_1=(p^2+q^2)^2$ on $\R^2$, where $(p,q)$ are the Darboux
  coordinates. The flows of $H_0$ and $H_1$ commute with $R_\pi$. Let
  $\varphi_i$, for $i=0, 1$, be obtained by composing $R_\pi$ with the
  time-one flow of $H_i$ and $x_i$ be a closed Reeb orbit with the
  return map $\varphi_i$. Clearly, $\COH(x_i;\Q)=\HF(\varphi_i)=\Q$ is
  supported in degree one. However, one can show that
  $\COH\big(x_0^2;\Q\big)=0$ and $\HF\big(\varphi_0^2;\Q\big)=\Q$
  (supported in degree two). At the same time,
  $\COH\big(x_1^2;\Q\big)=\HF\big(\varphi_1^2;\Q\big)=\Q$ (supported
  in degree three).
\end{Example}  

\subsubsection{From local to filtered homology}
In what follows, it is convenient to adopt the following convention;
cf.\ Section \ref{sec:SH-pers}. Namely, we will purely formally treat
the domain $W$ as a collection of ``closed orbits'' with action
zero. By definition, these orbits are in one-to-one correspondence
with the bars in $\SH(W;\F)$ beginning at zero; each of these orbits
has one-dimensional symplectic homology concentrated in the degree of
that bar and there are $\dim\H(W;\F)$ such bars. For instance, when
$W$ is star-shaped it contributes one orbit $[W]$ that has action zero
and degree $n$.

Local symplectic homology spaces are building blocks for the filtered
symplectic homology. This principle can be formalized in several ways.
For instance, assume that $a$ is the only point of $\CS(\alpha)$
in the interior of an interval $I=[t,s)$. Then
\begin{equation}
  \label{eq:local-filt}
\sum_{\CA(x)=a}\dim \SH_m(x;\F) =\dim \SH^I_m(W;\F).
\end{equation}
In fact,
  \begin{equation}
    \label{eq:sum}
  \bigoplus_{\CA(x)=a}\SH(x;\F) =\SH^I(W;\F)
\end{equation}
by the definition of local symplectic homology. Therefore, we have the
long exact sequence
  \begin{equation}
    \label{eq:long-exact}
   \ldots \to\SH^t_m(W;\F)\to \SH^s_m(W;\F)
   \to\bigoplus_{\CA(x)=a}\SH_m(x;\F)
   \to\SH^t_{m-1}(W;\F)\to\ldots .
 \end{equation}
 Moreover, for any field $\F$ and any interval $I$, there exists a
 spectral sequence
\begin{equation}
  \label{eq:spec-SH}
  E^r\Rightarrow \SH^I(W;\F) \textrm{ with }
  E^1=\bigoplus_{\CA(x)\in I}\SH(x;\F),
\end{equation}  
where, when $0\in I$, we treat $W$ as a collection of ``orbits'' with
action 0 as described above. This spectral sequence is simply
associated with the action filtration; see, e.g., \cite[Sec.\
2]{GG:LS}.

In a similar vein, we have analogues of \eqref{eq:sum} and
\eqref{eq:long-exact} for equivariant symplectic homology and
a variant of the Leray spectral sequence 
\begin{equation}
  \label{eq:spec-CH}
E^r \Rightarrow \COH^I(W;\F) \textrm{ with }
E^1=E^2=\SH^I(W;\F)\otimes \H_*(BS^1;\F),
\end{equation}
where for the sake of simplicity we have assumed that $0\not\in I$.
For instance, we have $\COH^I(W;\F)=0$ whenever $\SH^I(W;\F)=0$. In this
spectral sequence, $E^r_{q,p}=0$ when $q$ is odd. Hence, all odd
differentials vanish, and $E^1=E^2$, $E^3=E^4$, etc; see
\cite{CGG:Reeb-HZ2} for a further discussion.

As a consequence, when $W$ is a star-shaped domain, for every
$m\in n-1+2\N$ there must be a closed orbit $x$ with
$m\in\supp\COH(x;\F)$ due to \eqref{eq:CH}. Furthermore, if
$q\in n+2\N$ and $q\in\supp\COH(y;\F)$ for some closed orbit
  $y$, we must have
\begin{equation}
  \label{eq:odd}
  \sum\dim\COH_{q-1}(z;\F)\geq 2 \quad \textrm{or} \quad
  \sum\dim\COH_{q+1}(z;\F)\geq 2,
\end{equation}
where the sum is over all closed orbits $z$.
This can also be easily seen from \eqref{eq:CH} and the equivariant
version of \eqref{eq:long-exact}.

\begin{Remark}
\label{rmk:COH-complex}
When $\F=\Q$, the flow is non-degenerate and again
$I\subset (0,\infty)$, the spectral sequence \eqref{eq:spec-CH}, can
be reassembled into a complex. Namely, we have a complex generated
over $\Q$ by good closed orbits with action in $I$, graded by the
Conley--Zehnder index and filtered by the action, and such that its
homology is $\COH^I(W;\Q)$; see \cite[Sec.\ 2.5]{GG:LS}. The
differential in the complex is completely determined by the same
auxiliary data as the one from the construction of the equivariant
symplectic or Floer homology; it is functorial in a suitable sense,
etc. (Without non-degeneracy condition, the complex is the $E^1$-term
in \eqref{eq:spec-CH} but then we also need to fix more additional
data to define the differential.)  We omit the details of this
construction because it is not explicitly used in the paper. Here we
only note that this is no longer literally true when $\charr\F\neq 0$;
for then $\supp\COH(x;\F)$ can, for instance, be infinite.
\end{Remark}

Finally, let $\zeta_m^-(a)$ be the number of bars in the barcode of
$\SH(W;\F)$ of degree $m-1$ ending at $a$ and $\zeta^+_m(a)$ be the
number of bars of degree $m$ beginning at $a$. Set
$\zeta_m(a)=\zeta_m^-(a)+\zeta_m^+(a)$. The following observation is
nearly obvious.

\begin{Lemma}[``Morse Theory'']
  \label{lemma:bars-SH}
For all $a\in\R$, we have  
\begin{equation}
  \label{eq:bars-SH}
  \zeta_m(a)=\sum_{\CA(x)=a}\dim \SH_m(x; \F).
\end{equation}
\end{Lemma}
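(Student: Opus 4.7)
The plan is to deduce the identity from the interplay between the action filtration of the Floer chain complex computing $\SH(W;\F)$ and the barcode decomposition of the persistence module $\SH(W;\F)$. Given $a\in\R$, condition \ref{BA} lets me choose $\eps>0$ small enough that $a$ is the only point of $\CS(\alpha)\cup\{0\}$ inside $(a-\eps,\, a+\eps)$. I will combine two ingredients: a purely formal bookkeeping that identifies the kernel and cokernel of the structure map $\pi=\pi_{a-\eps,\,a+\eps}\colon\SH^{a-\eps}\to \SH^{a+\eps}$ with the bar counts $\zeta^\pm$, and the standard long exact sequence relating these filtered groups to the local symplectic homology at action $a$.

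For the bookkeeping, decomposing $\SH(W;\F)$ into interval modules $\F_{(b,d]}$ and checking case-by-case the four possible positions of the endpoints $b$ and $d$ relative to $a$ (using discreteness of $\CS(\alpha)$ to ensure $b<a+\eps$ iff $b\leq a$, etc.), I obtain
$$\dim \coker(\pi_m)=\zeta^+_m(a)\quad\text{and}\quad\dim \ker(\pi_{m-1})=\zeta^-_m(a),$$
since an interval module of degree $m$ contributes to $\coker$ exactly when its left endpoint is $a$, and an interval module of degree $m-1$ contributes to $\ker$ exactly when its right endpoint is $a$. The long exact sequence is
$$\cdots\to \SH^{a-\eps}_m\to \SH^{a+\eps}_m\to \bigoplus_{\CA(x)=a}\SH_m(x;\F)\to \SH^{a-\eps}_{m-1}\to\cdots,$$
induced by the short exact sequence of Floer complexes $0\to C^{a-\eps}\to C^{a+\eps}\to C^{a+\eps}/C^{a-\eps}\to 0$, after identifying the homology of the quotient complex with $\bigoplus_{\CA(x)=a}\SH_*(x;\F)$ via the locality of the differential at a single action level; this is the chain-level refinement of \eqref{eq:local-filt}. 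Splitting this long exact sequence into short exact pieces then gives
$$\sum_{\CA(x)=a}\dim \SH_m(x;\F)=\dim \coker(\pi_m)+\dim \ker(\pi_{m-1})=\zeta^+_m(a)+\zeta^-_m(a)=\zeta_m(a),$$
which is \eqref{eq:bars-SH}.

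The only non-formal step is the identification $\H_*(C^{a+\eps}/C^{a-\eps};\F)\cong\bigoplus_{\CA(x)=a}\SH_*(x;\F)$, which is where the Floer-theoretic input enters. I would first establish the lemma for a $C^2$-small nondegenerate perturbation of $\alpha$, where the Morse--Bott construction recalled in Section \ref{sec:SH-pers} makes the identification manifest (each orbit contributes exactly two generators $\hx,\cx$ in degrees $\mu(x)+1$ and $\mu(x)$, and the restriction of the Floer differential to the $a$-level is the internal Morse--Bott cascade within each orbit). The possibly degenerate case then follows by invariance: the local symplectic homology of an isolated orbit is by construction assembled from the local homologies of the nondegenerate orbits it splits into (Section \ref{sec:local-def}), while the barcode of $\SH(W;\F)$ in a small window around $a$ is stable under sufficiently small perturbations. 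No input beyond what has already been set up in Section \ref{sec:SH-pers} is required.
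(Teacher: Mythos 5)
Your proof is correct and follows essentially the same route as the paper's: identify $\zeta_m^\pm(a)$ with the kernel/cokernel of the structure map $\SH^{a-\eps}\to\SH^{a+\eps}$, invoke the long exact sequence of the action window $(a-\eps,a+\eps)$, and use the identification of $\SH^I_m(W)$ with $\bigoplus_{\CA(x)=a}\SH_m(x)$ (which the paper cites directly as \eqref{eq:local-filt} rather than re-deriving at the chain level as you do). The only superficial difference is that your final paragraph reproves \eqref{eq:local-filt} instead of quoting it.
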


\begin{proof}
  Throughout the proof we suppress the ground field $\F$ in the
  notation. It is sufficient to prove the lemma when
  $a\in \CS(\alpha)$ or $a=0$; for otherwise both sides of
  \eqref{eq:bars-SH} are zero. Using the background assumption
  \ref{BA}, fix $\eps>0$ so small that $a$ is the only point of the
  intersection $I\cap(\CS(\alpha)\cup\{0\})$, where
  $I=(a-\eps,a+\eps)$. Clearly,
\[
  \zeta_m^-(a)=\dim\ker\left(\SH^{a-\eps}_{m-1}(W) \to
    \SH^{a+\eps}_{m-1}(W)\right)
\]
and
\[
  \zeta_m^+(a)=
  \dim\coker\left(\SH^{a-\eps}_{m}(W)\to\SH^{a+\eps}_{m}(W)\right).
\]
From the long exact sequence
$$
\cdots\to \SH^{a-\eps}_m(W)\to \SH^{a+\eps}_m(W)\to  \SH^{I}_m(W) \to
\SH^{a-\eps}_{m-1}(W) \to  \SH^{a+\eps}_{m-1}(W)\to \cdots 
$$
we see that also
$$
\zeta_m^-(a)=\dim\im\left(\SH^{I}_{m}(W)\to\SH^{a-\eps}_{m-1}(W)\right)
$$
and
$$
\zeta_m^+(a)=\dim\im\left(\SH^{a+\eps}_{m}(W)\to\SH^{I}_{m}(W)\right)
=\dim\ker\left(\SH^{I}_{m}(W)\to\SH^{a-\eps}_{m-1}(W)\right).
$$
Hence, again since the sequence is exact,
$$
\zeta_m^-(a)+\zeta_m^+(a)=\dim \SH^I_m(W),
$$
and \eqref{eq:bars-SH} follows from \eqref{eq:local-filt}.
\end{proof}

\subsection{The beginning and end maps}
\label{sec:beg-end}
For a fixed field $\F$, denote by $\CB$ or $\CB(\F)$ the collection of
bars of the graded persistence module $\SH(W;\F)$. Assume that
$\SH^\infty(W;\F)=0$. Then every bar in $\CB$ is finite by Theorem
\ref{thm:vanishing}. Furthermore, let $\PP=\PP(\F)$ be the set of
$\F$-visible closed Reeb orbits. As in the previous section, let us
also formally add to $\PP$ the set of the ``beginning'' of the bars of
the form $(0,a]$ for some $a>0$, denoting the resulting set by
$\PP'$. These additional elements $x$ of $\PP'$ are simply in
one-to-one correspondence with such bars.  By definition, each such
``orbit'' $x$ has symplectic homology equal to $\F$ and concentrated
in $\deg(I)$, and zero action.  (We also set
$\mu_-(x)=\mu_+(x):=\deg(x)$ for such $x$.) When $W$ is star-shaped,
we are adding just one extra element corresponding to the domain $W$,
which we denote by $[W]$. This element has action zero and is
supported in degree $n$.

The sets $\PP$ and $\PP'$ depend on $\F$ or, to be more precise, on
$\charr\F$ even when the Reeb flow is non-degenerate and $W$ is
star-shaped. For instance, when $\charr \F=2$ we have two generators
for each closed orbit of the flow regardless of whether the orbit is
good or bad. However, when $\charr \F\neq 2$, only good orbits
contribute.

\begin{Theorem}
  \label{thm:beg-end}
  There exist maps $\beg\colon \CB\to \PP'$ (the beginning of $I$) and
  $\en\colon \CB\to \PP$ (the end of $I$) such that for every bar
  $I=(a,b]$ and $x=\beg(I)$ and $y=\en(I)$ we have
\begin{itemize}
\item $\CA(x)=a$ and $\CA(y)=b$,
\item $\deg(I)\in \supp \SH(x;\F)$ and $\deg(I)+1\in \supp \SH(y;\F)$.
\end{itemize}
\end{Theorem}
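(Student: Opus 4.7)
The plan is to use Lemma \ref{lemma:bars-SH} as a pigeonhole principle. Fix a bar $I = (a, b]$ of degree $d = \deg(I)$. Since $\SH^\infty(W; \F) = 0$ is in force, Theorem \ref{thm:vanishing} guarantees $b < \infty$, so both endpoints are finite and must be matched. The key bookkeeping observation is that $I$ contributes exactly $1$ to $\zeta^+_d(a)$ (it is a bar of degree $d$ beginning at $a$) and, because by definition $\zeta^-_m(c)$ counts bars of degree $m-1$ ending at $c$, it also contributes exactly $1$ to $\zeta^-_{d+1}(b)$.

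To construct $\beg$, I would apply Lemma \ref{lemma:bars-SH} at the point $a$ in degree $d$ to obtain
$$
\zeta^+_d(a) \;\leq\; \zeta_d(a) \;=\; \sum_{\CA(x) = a} \dim \SH_d(x; \F).
$$
When $a > 0$, every orbit $x$ contributing on the right lies in $\PP$, and this inequality leaves enough room to assign to each bar of degree $d$ starting at $a$ some $x \in \PP$ with $\CA(x) = a$ and $d \in \supp \SH(x; \F)$; set $\beg(I) := x$. When $a = 0$, by the convention introduced in the present section the bars beginning at $0$ are in one-to-one correspondence with the formal elements of $\PP' \setminus \PP$, each of which is declared to have zero action and support equal to the degree of the matched bar, so I set $\beg(I)$ to be that formal orbit.

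The construction of $\en$ is parallel. Applying Lemma \ref{lemma:bars-SH} at $b$ in degree $d+1$ yields
$$
\zeta^-_{d+1}(b) \;\leq\; \zeta_{d+1}(b) \;=\; \sum_{\CA(y) = b} \dim \SH_{d+1}(y; \F),
$$
and since $b > 0$ forces every $y$ on the right to lie in $\PP$, I can assign to each bar of degree $d$ ending at $b$ an orbit $y \in \PP$ with $\CA(y) = b$ and $d+1 \in \supp \SH(y; \F)$; set $\en(I) := y$.

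The hard work has already been absorbed into Lemma \ref{lemma:bars-SH}, so this step is essentially bookkeeping and there is no substantive obstacle. The only points that require care are (i) treating the endpoint $a = 0$ via the formal extension $\PP' \supset \PP$ so that $\beg(I)$ is well-defined there, and (ii) recognizing that the pigeonhole inequalities yield only the existence of $\beg$ and $\en$ and not canonical or injective choices. Since the statement asserts only existence with the prescribed action and support constraints, these suffice.
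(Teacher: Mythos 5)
Your proposal is correct and is essentially the same argument the paper gives: both hinge on Lemma \ref{lemma:bars-SH} as the pigeonhole that matches bar endpoints to orbits with the requisite local homology, and both treat $a=0$ via the formal extension $\PP'$. The paper's version is merely more explicit — it orders the bars, introduces truncated counts $\zeta^i_m(a)$, and fixes an adapted basis so that $\beg(I_i)$ and $\en(I_i)$ are given by concrete formulas rather than by an unspecified choice function — but this extra bookkeeping adds nothing to the existence claim, which is all the theorem asserts.
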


As a consequence of the second of these requirements, we see
that
$$
\mu_-(x)\leq \deg(I)\leq \mu_+(x)+1
$$
and 
$$
\mu_-(y)-1\leq \deg(I)\leq \mu_+(y),
$$
and therefore
\begin{equation}
  \label{eq:mu(x)-mu(y)1}
\mu_-(y)-\mu_+(x)\leq 2.
\end{equation}
Moreover,
\begin{equation}
  \label{eq:mu(x)-mu(y)2}
\deg(I)=\mu_+(x)+1=\mu_-(y)-1,
\end{equation}
whenever the equality holds in \eqref{eq:mu(x)-mu(y)1}. Note also that
the second condition of the theorem necessitates that the orbits in
$\PP$ are $\F$-visible. We will see later that in the setting of
Theorem \ref{thm:SH} and as a consequence of that theorem, the maps
$\beg$ and $\en$ are onto. We do not know if this is true in
general. Furthermore, in the setting of Theorem \ref{thm:SH} and again
as a consequence of that theorem, both maps are one-to-one. This is
certainly not true in general. For instance, when $\dim\SH(x;\F)= 3$
and $x$ is the only closed orbit with action $\CA(x)$, at least one of
the pre-images $\beg^{-1}(x)$ or $\en^{-1}(x)$ must contain more than
one point.

In general, even when the Reeb flow is non-degenerate, the maps $\beg$
and $\en$ are not canonically defined and furthermore, just as the set
$\PP$, depend on $\charr\F$. However, the construction can be made
canonical if all closed orbits have distinct actions.

\begin{proof}[Proof of Theorem \ref{thm:beg-end}]
Let $\F$ be the underlying coefficient field and let
$$
e(m, a)_1, e(m, a)_2, \dots, e(m, a)_{\zeta_m(a)} 
$$
be an $\F$-basis for
$$
E:=\bigoplus_{\A(x)=a} \SH_m(x;\F),
$$
where $\zeta_m(a)$ is the bar counting function from Lemma
\ref{lemma:bars-SH}. To keep track of the orbits, we require in
addition the basis elements $e(m, a)_i$ to be adapted to the above
direct sum decomposition of $E$. Next, let
$\CB = \{ I_1, I_2, \ldots \}$ be an arbitrary ordering by $\N$ of the
bars in the symplectic homology barcode $\CB$. (Note that, by the
background assumption \ref{BA}, there are countably many bars in
$\CB$.) Let us denote by $\zeta^i_m(a)$ the restriction of
$\zeta_m(a)$ to the ``truncated'' barcodes
$\CB^i = \{ I_1, I_2, \dots, I_i\}$. In other words, $\zeta^i_m(a)$ is
the total number of bars of degree $m-1$ ending at $a$ and of degree
$m$ beginning at $a$ among the first $i$ bars.

Now we are in a position to define the maps $\beg$ and $\en$. To
simplify the notation, set $(c_i, b_i] :=I_i$ and $k_i :=
\deg(I_i)$. Let us denote by $\pi$ the natural projection map sending
$ e(m, a)_i$ to the corresponding element in $\mathcal{P}'$. (Here we
use the requirement that the basis respects the direct sum
decomposition). We define $\beg \colon \CB \to \mathcal{P}'$ by
$$
I_i \mapsto \pi \big (e(k_i, c_i)_{\zeta^i_{k_i}(c_i)} \big )
$$ 
and, similarly, $\en \colon \CB \to \mathcal{P}'$ by 
$$
I_i \mapsto \pi \big ( e(k_i+1, b_i)_{\zeta^i_{k_i+1}(b_i)} \big ).
$$ 
We leave it to the reader to check that the maps above satisfy the
requirements.
\end{proof}

\begin{Remark}
  \label{rmk:beg-end}  
  Our requirements on the maps $\beg$ and $\en$ are deliberately
  minimal. For instance, with more effort, we could construct these
  maps so that the sum of the number of bars of degree $m-1$ in
  $\en^{-1}(x)$ and the number of bars in $\beg^{-1}(x)$ of degree $m$
  is equal to $\dim\SH_m(x;\F)$. This would be a refinement of
  \eqref{eq:bars-SH} in Lemma \ref{lemma:bars-SH}. However, in the
  cases we are interested in, this condition is automatically
  satisfied as a consequence of \eqref{eq:bars-SH}, and we omit this
  construction.
\end{Remark}

\section{Conley-Zehnder index and index recurrence}
\label{sec:IR}
The goal of this section is two-fold. In the first part we
review several Conley--Zehnder index type invariants and their
properties, and the notion of dynamical convexity, starting with the
linear algebra setting and then turning to closed Reeb orbits. This
material is quite standard. The second part is devoted to index
recurrence and its illustrative applications, and this part is mainly
new.

\subsection{Background: the Conley--Zehnder index and dynamical
  convexity}
\label{sec:CZ+DC}

\subsubsection{Conley--Zehnder index: linear algebra}
\label{sec:CZ-LA}
In this section for, the reader's convenience we recall some basic
properties of the mean and Conley--Zehnder indices. We refer the
reader to, e.g., \cite{Lo} or \cite[Sec.\ 3]{SZ} or \cite[Sec.\
4]{GG:LS}, which we closely follow here, for a more thorough
treatment.

Let $\tSp(2m)$ be the universal covering of the group $\Sp(2m)$ of
symplectic linear transformations of $\R^{2m}$. We view elements of
$\tSp(2m)$ as paths starting at the identity, taken up to homotopy
with fixed end-points, and abusing terminology we will often refer to
elements of $\tSp(2m)$ as maps. Unless specifically stated otherwise,
we assume that such paths are parametrized by $[0, 1]$.

A map $\hmu$ from a Lie group to $\R$ is said to be a quasimorphism if
it fails to be a homomorphism only up to a constant, i.e.,
$$
\big|\hmu(\Phi\Psi)-\hmu(\Phi)-\hmu(\Psi)\big|<\const,
$$ 
where the constant is independent of $\Phi$ and $\Psi$. One can prove
that there is a unique quasimorphism
$\hmu\colon \widetilde{\Sp}(2m)\to\R$ which is continuous and
homogeneous (i.e., $\hmu(\Phi^k)=k\hmu(\Phi)$) and satisfies the
normalization condition:
$$
\hmu(\Phi_0)=2\quad \textrm{for}\quad \Phi_0(t)
=\exp\big(2\pi \sqrt{-1}
t\big)\oplus I_{2m-2}
$$ 
with $t\in [0,\,1]$, in the self-explanatory notation; see \cite{BG}.
The quasimorphism $\hmu$ is called the \emph{mean index}. The
continuity requirement holds automatically and is not necessary for
the characterization of $\hmu$. (This is not obvious). The mean index
is also automatically conjugation invariant, as a consequence of the
homogeneity, and $\hmu$ restricts to an isomorphism
$\pi_1(\Sp(2m))\to 2\Z$. Furthermore, $\hmu(\Phi)\in\Z$ whenever the
elliptic eigenvalues of $\Phi(1)$ are $\pm 1$ or when $\Phi(1)$ is
hyperbolic. (Elliptic eigenvalues are by definition unit eigenvalues
and the elliptic part of $A\in\Sp(2m)$ is the restriction of $A$ to
the sum of the generalized eigenspaces for all elliptic eigenvalues.)

Recall that $A\in\Sp(2m)$ is said to be \emph{non-degenerate} if all
eigenvalues of $A$ are different from one, and \emph{strongly
  non-degenerate} if all powers $A^k$, $k\in\N$, are
non-degenerate. (Similarly, an element $\Phi\in\tSp(2m)$ is
non-degenerate when $\Phi(1)$ is.) We denote the set of non-degenerate
symplectic linear maps by $\Sp^*(2m)$ and the part of $\tSp(2m)$ lying
over $\Sp^*(2m)$ by $\tSpn(2m)$. It is not hard to see that
$A=\Phi(1)$ can be connected to a symplectic map with the elliptic
part equal to $-I$ (if non-trivial) by a path $\Psi$ lying entirely in
$\Sp^*(2m)$. Concatenating this path with $\Phi$, we obtain a new path
$\Phi'$. By definition, the \emph{Conley--Zehnder index}
$\mu(\Phi)\in\Z$ of $\Phi$ is $\hmu(\Phi')$. One can show that
$\mu(\Phi)$ is well-defined, i.e., independent of $\Psi$. The function
$\mu\colon \tSpn(2m)\to\Z$ is locally constant, i.e., constant on
connected components of $\tSpn(2m)$.

The mean index $\hmu(\Phi)$ measures the total rotation angle of
certain unit eigenvalues of $\Phi(t)$. For instance,
for the path $\Phi(t)=\exp\big(2\pi\sqrt{-1}\lambda t\big)$,
$t\in [0,\,1]$, in $\Sp(2)$ we have
\begin{equation}
\label{eq:sp2}
\hmu(\Phi)=2\lambda \textrm{ and }
\mu(\Phi)=\sign(\lambda)\big(2\lfloor|\lambda|\rfloor +1\big)
\textrm{ when } \lambda\not\in\Z, 
\end{equation}
where $\sign(\lambda)$ stands, predictably, for the sign of $\lambda$.

The \emph{upper and lower
  Conley--Zehnder indices} are defined as
$$
\mu_+(\Phi):=\limsup_{\tPhi\to \Phi}\mu(\tPhi)
\quad\textrm{and}\quad
\mu_-(\Phi):=\liminf_{\tPhi\to \Phi}\mu(\tPhi),
$$
where in both cases the limit is taken over $\tPhi\in \tSpn(2m)$
converging to $\Phi\in \TSp(2m)$. In fact, $\mu_+(\Phi)$ is simply
$\max\mu(\tPhi)$, where $\tPhi\in \tSpn(2m)$ is sufficiently close to
$\Phi$ in $\tSp(2m)$; likewise, $\mu_-(\Phi)=\min \mu(\tPhi)$. (In
terms of actual paths, rather than their homotopy classes, $\tPhi$ can
be taken $C^r$-close to $\Phi$ for any $r\geq 0$; the resulting
definition of $\mu_\pm(\Phi)$ is independent of $r$ and equivalent to
the one above.)  Clearly, $\mu(\Phi)=\mu_\pm(\Phi)$ when $\Phi$ is
non-degenerate. The indices $\mu_\pm$ are the upper semi-continuous
and, respectively, lower semi-continuous extensions of $\mu$ from
$\tSpn(2m)$ to $\tSp(2m)$. These maps are of interest to us because
they bound the support of the local symplectic homology of an isolated
closed orbit; see \eqref{eq:support1} and \eqref{eq:support2}. To the
best of our knowledge, the indices $\mu_\pm$ were first introduced and
studied in \cite{Lo90, Lo97}.

The mean index and the upper and lower indices are related by the
inequalities
\begin{equation}
\label{eq:mu-del}
\hmu(\Phi)-m\leq\mu_-(\Phi)\leq \mu_+(\Phi)\leq \hmu(\Phi)+m.
\end{equation}
Moreover, at least one of the inequalities is strict when $\Phi(1)$ is
\emph{weakly non-degenerate}, i.e., at least one of the eigenvalues is
different from 1. As a consequence of \eqref{eq:mu-del}, $\mu_\pm$ are
quasimorphisms $\tSp(2m)\to\Z$, and
$$
\lim_{k\to\infty}\frac{\mu_\pm(\Phi^k)}{k}=\hmu(\Phi);
$$
hence the name ``mean index'' for $\hmu$.

Whenever $\Phi'\in\TSp(2m')$ and $\Phi''\in\TSp(2m'')$ we have the
\emph{direct sum} $\Phi=\Phi'\oplus \Phi''\in \TSp(2m)$, where
$m=m'+m''$, defined in a natural way. This is equivalent to that
 $$
 \Phi(1) = \Phi'(1)\oplus \Phi''(1) \textrm{ and }
 \hmu(\Phi)=\hmu(\Phi')+\hmu(\Phi'').
 $$
 It is easy to see that any decomposition $\Phi(1)=A\oplus B$ with
 $A\in\Sp(2m')$ and $B\in\Sp(2m'')$ extends (non-uniquely) to a direct
 sum decomposition of $\Phi$.

We call $\Psi\in\TSp(2m')$ the \emph{non-degenerate part} of
$\Phi\in\TSp(2m)$ when $\Psi(1)$ is non-degenerate,
$\Phi(1)=\Psi(1)\oplus A$, with $m'\leq m$ and
$A\in \Sp\big(2(m-m')\big)$ \emph{totally degenerate} (i.e., with all
eigenvalues equal to 1), and $\hmu(\Psi)=\hmu(\Phi)$. (The mean index
condition is imposed here to make $\Psi$ unique and ensure that
$\Psi=\Phi$ when $\Phi$ is non-degenerate.) When $\Phi$ is itself
totally degenerate we say that the non-degenerate part of $\Phi$ is
trivial. The non-degenerate part of $\Phi$ is a direct summand.

 The indices $\hmu$ and $\mu_\pm$ are additive under direct sum; see,
 e.g., \cite{Lo97} and \cite[Sec. 4.1]{GG:LS}. Namely, for
 $\Phi'\in\TSp(2m')$ and $\Phi\in\TSp(2m'')$, we have
\begin{equation}
\label{eq:add}
\hmu(\Phi'\oplus\Phi'')=\hmu(\Phi')+\hmu(\Phi'')
\quad\textrm{and}\quad
\mu_\pm(\Phi'\oplus\Phi'')=\mu_\pm(\Phi')+\mu_\pm(\Phi'').
\end{equation}

Let us now briefly recall the definitions of several invariants of
$\Phi$, which play a central role in the index recurrence theorem
(Theorem \ref{thm:IR-A}). We refer the reader to \cite[Sec.\
4.1.3]{GG:LS} for more details and proofs.

Consider first a totally degenerate symplectic linear map
$A\in\Sp(2m)$. Then $A=\exp(JQ)$ where $J$ is the matrix of the
standard linear symplectic form on $\R^{2n}$ and all eigenvalues of
$JQ$ are equal zero. The quadratic form $Q$ can be symplectically
decomposed into a sum of terms of four types:
\begin{itemize}
\item the identically zero quadratic form on $\R^{2\nu_0}$,
\item the quadratic form $Q_0=p_1q_2+p_2q_3+\cdots+p_{d-1}q_d$ in
  Darboux coordinates on $\R^{2d}$, where $d\geq 1$ is odd,
\item the quadratic forms $Q_\pm=\pm(Q_0+p^2_d/2)$ on $\R^{2d}$ for
  any $d$.
\end{itemize}
These normal forms are taken from \cite[Sec.\ 2.4]{AG}; see also
\cite{LM} and \cite[Sect.\ 2.6]{CLW} and Section
\ref{sec:isospectral}. We find them more convenient to work with than
the original Williamson normal forms; see \cite{Wi} and, e.g.,
\cite[App.\ 6]{Ar}.

Denoting by $\sgn$ the signature, we have $\dim\ker Q_0=2$ and
$\sgn Q_0=0$, and $\dim\ker Q_\pm=1$ and $\sgn Q_\pm=\pm 1$.  Let
$b_*(Q)$, where $*=0,\pm$, be the number of the $Q_0$ and $Q_\pm$
terms in the decomposition. Let us also set $b_*(A):=b_*(Q)$ and
$\nu_0(A):=\nu_0(Q)$, and $b_*(\Phi):=b_*(A)$ when $A=\Phi(1)$ is
totally degenerate.  These are symplectic invariants of $Q$ and $A$
and $\Phi$, and the geometric multiplicity of the eigenvalue $1$ is
$2(b_0+\nu_0)+b_++b_-$.

These definitions readily extend to all paths. Namely, every
$\Phi\in\TSp(2m)$ can be written (non-uniquely) as a direct sum
$\Phi_0\oplus \Psi$ where $\Phi_0(1)\in\Sp(2m_0)$ is totally
degenerate and $\Psi(1)\in\Sp(2m_1)$ is non-degenerate. In particular,
$m_0$ is the algebraic multiplicity of the eigenvalue 1 of $\Phi(1)$,
and $m_0+m_1=m$. Then we set
$$
b_*(\Phi):=b_*\big(\Phi_0(1)\big)\textrm{ for $*=0,\pm$ and }
\nu_0(\Phi):=\nu_0\big((\Phi_0(1)\big).
$$
These are symplectic invariants of $\Phi$. Furthermore, set
$$
\beta_\pm:=\nu_0+b_0+b_\pm.
$$
It readily follows from the definition that
\begin{equation}
  \label{eq:beta-it}
\beta_\pm\big(\Phi^k\big)=\beta_\pm(\Phi)
\end{equation}
for all $k\in\N$ when $\Phi$ is totally degenerate or, more generally,
for admissible $k$, where by definition
$\beta_\pm(\Phi):=\beta_\pm(\Phi_0)$. (This is also true for $b_*$ and
$\nu_0$.)

It is not hard to show that
\begin{equation}
\label{eq:bpm}
\mu_\pm(\Phi)=\mu(\Psi)+ \hmu(\Phi_0)\pm\beta_\pm(\Phi);
\end{equation}
see \cite[Sec.\ 4.1]{GG:LS}. In particular, when $\Psi$ is the
non-degenerate part of $\Phi$, i.e., in addition $\hmu(\Phi_0)=0$, we
have
\begin{equation}
\label{eq:bpm2}
\mu_\pm(\Phi)=\mu(\Psi)\pm\beta_\pm(\Phi),
\end{equation}

Furthermore, as readily follows from the definitions, the invariants
$b_*$ and $\nu_0$ and $\beta_\pm$ are additive in the sense of
\eqref{eq:add}. For instance, for $\Phi'\in\TSp(2m')$ and
$\Phi\in\TSp(2m'')$, we have
\begin{equation}
\label{eq:add2}
\beta_\pm(\Phi'\oplus\Phi'')=\beta_\pm(\Phi')+\beta_\pm(\Phi'').
\end{equation}
Likewise,
$$
\beta_\pm\big(\Phi^{-1}\big)=\beta_\mp(\Phi).
$$
Finally, we note that for any $\Phi\in\TSp(2m)$,
\begin{equation}
  \label{eq:beta+-beta-}
|\beta_+(\Phi)-\beta_-(\Phi)|=|b_+(\Phi)-b_-(\Phi)|\leq m,
\end{equation}
which is essential for what follows.

\subsubsection{Dynamical convexity}
\label{sec:index:DC}
The notion of dynamical convexity was originally introduced in
\cite{HWZ:convex} for Reeb flows on the standard contact $S^3$ and,
somewhat in passing, for higher-dimensional contact spheres. In
\cite{AM:elliptic, AM} the definition was extended to other contact
manifolds. In this section we mainly focus on the linear algebra
aspect of dynamical convexity and it is convenient for us to adopt the
following definition.

\begin{Definition}
  \label{def:DC} A path $\Phi\in \TSp(2m)$ is said to be
  \emph{dynamically convex} (DC) if $\mu_-(\Phi)\geq m+2$.
\end{Definition}

A word of warning is due: the flow generated by a positive definite
Hamiltonian need not be dynamically convex. This is the case for,
instance, for the time-one flow of a small Hamiltonian; see \cite[Ex.\
4.7]{GG:LS} for further discussion. The proof of the following result
is quite standard and can be found in, e.g., \cite{Lo, SZ} and also
\cite[Sec.\ 4.2]{GG:LS}.

\begin{Lemma}
\label{lemma:DC}
For any $\Phi \in \TSp(2m)$, we have
\begin{equation}
\label{eq:DC-it}
\mu_-\big(\Phi^{k+1}\big)\geq
\mu_-\big(\Phi^{k}\big)+\big(\mu_-(\Phi)-m\big)
\end{equation}
for all $k\in\N$. In particular,
$$
\mu_-\big(\Phi^k\big)\geq (\mu_-(\Phi)-m)k+m.
$$
Assume, furthermore, that $\Phi$ is dynamically convex.  Then the
function $\mu_-(\Phi^k)$ of $k\in\N$ is strictly increasing,
$$
\mu_-\big(\Phi^k\big)\geq 2k+m,
$$
and $\hmu(\Phi)\geq \mu_-(\Phi)-m\geq 2$. Thus
$\mu_-\big(\Phi^k\big)\geq m+2$ and all iterates $\Phi^k$ are also
dynamically convex.
\end{Lemma}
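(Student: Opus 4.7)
The plan is to first establish the superadditive inequality \eqref{eq:DC-it}, then iterate it, and finally read off the dynamical convexity consequences essentially for free.

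For the main inequality, I would reduce to a "quasimorphism under powers" statement: $\mu_-(\Phi \cdot \Phi^k) \geq \mu_-(\Phi) + \mu_-(\Phi^k) - m$. To prove this, I would decompose $\Phi$ as a direct sum $\Phi = \Psi \oplus \Phi_0$, where $\Psi \in \tSp(2m_1)$ is the non-degenerate part and $\Phi_0 \in \tSp(2m_0)$ is totally degenerate, with $m_0 + m_1 = m$. Since this direct sum decomposition is preserved by iteration, $\Phi^k = \Psi^k \oplus \Phi_0^k$, and by additivity \eqref{eq:add}, \eqref{eq:add2} it suffices to prove the inequality separately for $\Psi$ (with error $m_1$) and for $\Phi_0$ (with error $m_0$), then add. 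On the totally degenerate factor, iteration-invariance of $\beta_-$ from \eqref{eq:beta-it} combined with \eqref{eq:bpm} gives $\mu_-(\Phi_0^j) = j\hmu(\Phi_0) - \beta_-(\Phi_0)$, so the successive difference equals $\hmu(\Phi_0)$, which is already $\geq \mu_-(\Phi_0) \geq \mu_-(\Phi_0) - m_0$ by \eqref{eq:mu-del}.

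The genuine work sits in the non-degenerate factor, and this is the main obstacle: proving $\mu(\Psi^{k+1}) \geq \mu(\Psi^k) + \mu(\Psi) - m_1$ where $\mu_- = \mu$ because $\Psi$ is non-degenerate. My approach here is to reduce to scalar blocks. Decompose $\Psi$ symplectically into elementary blocks (real hyperbolic, complex hyperbolic pairs, and elliptic $\Sp(2)$ blocks), observe that the inequality is additive over direct summands, and then verify it blockwise. On hyperbolic blocks the index contribution is invariant or grows linearly without defect, while on each elliptic $\Sp(2)$ summand the formula \eqref{eq:sp2} reduces the claim to the elementary inequality $\sgn(\lambda)(2\lfloor(k{+}1)|\lambda|\rfloor{+}1) \geq \sgn(\lambda)(2\lfloor k|\lambda|\rfloor{+}1) + \sgn(\lambda)(2\lfloor |\lambda|\rfloor{+}1) - 1$, which is a standard floor-function manipulation. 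Summing the unit defects over blocks gives at most $m_1$, as required.

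Granted \eqref{eq:DC-it}, the second statement $\mu_-(\Phi^k) \geq (\mu_-(\Phi)-m)k + m$ follows by a straightforward induction on $k$: the base case $k=1$ is an identity, and the inductive step is a direct application of \eqref{eq:DC-it}. Finally, for the dynamical convexity conclusions, the hypothesis $\mu_-(\Phi) \geq m+2$ makes $\mu_-(\Phi)-m \geq 2$, so \eqref{eq:DC-it} immediately says $\mu_-(\Phi^{k+1}) - \mu_-(\Phi^k) \geq 2$, giving strict monotonicity; the iterated bound yields $\mu_-(\Phi^k) \geq 2k+m$; the mean index estimate $\hmu(\Phi) \geq \mu_-(\Phi) - m \geq 2$ is a direct quotation of \eqref{eq:mu-del}; and dynamical convexity of $\Phi^k$ holds because $2k + m \geq m+2$ for all $k \geq 1$.
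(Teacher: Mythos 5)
The paper does not actually supply a proof of this lemma; it cites \cite[Sec.\ 4.2]{GG:LS} and \cite{Lo,SZ}. Your block-decomposition strategy is the standard approach taken in those references, and the scaffolding (split off the totally degenerate factor, use \eqref{eq:add}, \eqref{eq:add2}, \eqref{eq:bpm}, \eqref{eq:beta-it} there, then reduce the non-degenerate factor to elementary blocks and apply a Bott-type inequality) is the right one. The final paragraph, deriving the dynamical-convexity consequences from \eqref{eq:DC-it} and \eqref{eq:mu-del}, is also correct.

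However, there is a genuine gap at exactly the step you flag as ``the genuine work.'' You write that on the non-degenerate factor $\Psi$ you may replace $\mu_-$ by $\mu$ ``because $\Psi$ is non-degenerate'' and then prove $\mu(\Psi^{k+1}) \geq \mu(\Psi^k) + \mu(\Psi) - m_1$. This substitution is not legitimate: non-degeneracy of $\Psi$ does not imply non-degeneracy of $\Psi^k$, and indeed $\Psi^k$ becomes degenerate whenever some eigenvalue of $\Psi(1)$ is a root of unity whose order divides $k$. Correspondingly, the reduction on an elliptic $\Sp(2)$ block to the displayed floor-function inequality is only valid when $k\lambda,(k+1)\lambda \notin \Z$, because \eqref{eq:sp2} is stated precisely for $\lambda \notin \Z$; when $k\lambda \in \Z$ the block $R^k$ is a loop and one must instead use $\mu_-(R^k) = \hmu(R^k) - \beta_-(R^k) = 2k\lambda - 1$. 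This is not a pedantic point: the degenerate case is exactly where \eqref{eq:DC-it} is attained with equality. For instance, with $\lambda = 1/2$ and $k = 1$ one computes $\mu(R) = 1$ and $\mu_-(R^2) = 2 - 1 = 1$, so both sides of \eqref{eq:DC-it} equal $1$. Since the bound is tight there, the degenerate sub-case must be verified separately rather than absorbed into ``a standard floor-function manipulation.'' A secondary, smaller omission is that decomposing $\Psi$ into $2\times2$ and $4\times4$ blocks presupposes $\Psi(1)$ is semisimple; one should first invoke an isospectral homotopy (as the paper does in the proof of Lemma \ref{lemma:inequalities1}) and note that $\mu_\pm$ are unchanged along it. All of this is fillable, but as written the argument skips the cases where the inequality actually becomes sharp.
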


\subsubsection{Conley--Zehnder index: closed orbits}
\label{sec:CZ-orbits}
In this section we briefly recall how the definition of the
Conley--Zehnder index type invariants from Section \ref{sec:CZ-LA} for
elements of $\tSp(2m)$ translates to closed orbits of a Reeb
flow. Below all symplectic vector bundles are treated as Hermitian
vector bundles.

Now let $M=\p W$ be the boundary of a Liouville domain
$(W^{2n\geq 4},d\alpha)$ such that $c_1(TW)=0$ as in Section
\ref{sec:Liouville} and let $\xi$ be the contact structure
$\ker\alpha|_M$. Then $c_1(\xi)=0$ and hence the top complex wedge
power $L_M$ of $\xi$ is a trivial complex line bundle over
$M$. Likewise, the top complex wedge of $TW$ is a trivial line bundle
$L$. Fix a unit section $\fs$ of $L$. Note that the symplectic
orthogonal to $\xi$ in $T_MW$ is trivial and using, say, the Reeb
vector field $R$ as a section of this bundle, we have an isomorphism
$T_MW=\xi\oplus \C$ as bundles over $M$. Let $\fs_M$ be the unit
section of $L_M$ such that $\fs=\fs_M\otimes R$ over $M$. This section
is completely determined by $\fs$, up to homotopy.

Next, let $x\colon S_T^1:=\R/T\Z\to M$ be a closed orbit of the Reeb
flow, which we do not require to be prime. The pull-back
$\fs_M\circ x$ of $\fs_M$ by $x$ is a non-vanishing section of the
pull-back $x^*L_M$ over $S_T^1$ and $x^*L_M$ is in turn the top
complex wedge of $x^*\xi$. Pick any K\"ahler trivialization of
$x^*\xi$ which, up to homotopy, gives rise to the trivialization
$\fs_M\circ x$ of $x^*L$, i.e., such that its top complex wedge is
$\fs_M\circ x$. (Such a trivialization is unique up to homotopy.) Then
we use this trivialization to turn the linearized Reeb flow
$D\varphi^t$, $t\in [0,T]$, along $x$ into a path $\Phi(t)\in \Sp(2m)$
with $m=n-1$ starting at the identity, and hence an element
$\Phi\in\tSp(2m)$. By definition, $\mu_\pm(x):=\mu_\pm(\Phi)$ and
$\hmu(x):=\hmu(\Phi)$. In particular, when $x$ is non-degenerate,
$\mu(x):=\mu(\Phi)$. Furthermore, $\hmu$ is homogeneous, i.e.,
$$
\hmu\big( x^k\big)=k\hmu(x),
$$
and
$$
\hmu(x)-(n-1)\leq \mu_-(x)\leq \mu_+(x)\leq \hmu(x)+(n-1).
$$
by \eqref{eq:mu-del}. In a similar vein, we set
$\beta_\pm(x):=\beta_\pm(\Phi)$, etc.

A Reeb flow is said to be \emph{dynamically convex} if
$\mu_-(x)\geq n+1$ for all closed Reeb orbits $x$. By Lemma
\ref{lemma:DC}, this is equivalent to that $\mu_-(x)\geq n+1$ for all
prime orbits. This notion was introduced in \cite{HWZ:convex} where it
was shown that the Reeb flow on a convex hypersurface in $\R^{2n}$ is
dynamically convex.  We refer the reader to, e.g., \cite[Sec.\
4.2]{GG:LS} for a simple proof of this fact and to \cite{ALM,
  AM:elliptic, AM, DLLW1, GM, GGMa} for other relevant notions and
results.

\subsection{Index recurrence and applications}
\label{sec:IR-Thm}
The index recurrence theorem from \cite{GG:LS} asserts that
arbitrarily long intervals in the sequence of the Conley--Zehnder
indices of the iterates of one or a finite collection of symplectic
linear maps repeat themselves infinitely many times up to a
shift. That theorem is closely related and can be derived
  from various versions of the common index jump theorem \cite{DLW,
    Lo, LZ}, which is done in detail in \cite{DLWZ}. A more general
variant of the index recurrence theorem which we
prove here (using the approach from \cite{GG:LS})
allows for control of not only the indices but also additional real
parameters assigned to the maps, which should be thought of as periods
of closed orbits. In addition, the new variant shows that
the sequence of ``recurrence events'' can be made to have a bounded
gap which is crucial for the main results of this paper.

Thus we consider pairs $(\Phi,a)$ where the ``action'' $a$ is real and
positive, $\Phi\in \TSp(2m)$ and set $\Delta:=\hmu(\Phi)>0$.  Assume
that we have a finite collection of such pairs $(\Phi_i,a_i)$,
$i=1,\ldots, r$. Let us relabel and regroup these pairs putting two
pairs in the same group, which we will call a \emph{cluster}, if they
have the same ratio $a/\Delta$. To be more specific, we will denote
the pairs as $(\Phi_{ij},a_{ij})$, where $i=1,\ldots, i_0$ and
$j=1,\ldots, j_0 (i)$ with the index $i$ indicating the cluster, i.e.,
$i=i'$ if and only if $a_{ij}/\Delta_{ij}=a_{i'j'}/\Delta_{i'j'}$. We
will extend this division into clusters to ``iterated'' pairs
$(\Phi^k_{ij},ka_{ij})$, $k\in \N$, by the same rule. (In particular,
all iterates of a fixed pair are in the same cluster.) We call the
union $\CS$ of the sequences $a_{ij}\N$ the \emph{spectrum} of the
collection $(\Phi_{ij},a_{ij})$. In what follows, we denote by $[t]$
the closest integer to $t\in \R$ assuming that $t\not \in 1/2+\Z$.

The statement of our next result is admittedly notation heavy. It
might be helpful for the reader to keep in mind that in what follows
$l$ indexes recurrence events, $k$ with various subscripts and $\ell$
stand for the order of iteration, and $d_{il}$ is the index shift,
i.e., an integer approximating the mean indices of the iterates
$\Phi^{k_{ijl}}_{ij}$.

\begin{Theorem}[Index recurrence with action matching and bounded gap]
\label{thm:IR-A}
Let $(\Phi_{ij},a_{ij})$ be as above. Then for any $\eta>0$ and any
$\ell_0\in\N$, there exists a sequence $C_l\to\infty$ of real numbers
and integer sequences $d_{il}\to\infty$ and $k_{ijl}\to \infty$ (as
$l\to\infty$) such that for all $i$, $j$ and $l$, and all $\ell\in\Z$
in the range $1\leq |\ell|\leq \ell_0$, we have 
\begin{itemize}
\item[\reflb{IRA0}{\rm{(IR1)}}]
  $\big|\hmu\big(\Phi^{k_{ijl}}_{ij}\big)-d_{il}\big|<\eta$ and, in
  particular, $d_{il}=\big[\hmu\big(\Phi^{k_{ijl}}_{ij}\big)\big]$,
  and
  $$
  d_{il}-m
  \leq \mu_-\big(\Phi^{k_{ijl}}_{ij}\big)
  \leq \mu_+\big(\Phi^{k_{ijl}}_{ij}\big)
  \leq d_{il}+ m;
  $$
\item[\reflb{IRA+}{\rm{(IR2)}}]
  $\mu_\pm\big(\Phi^{k_{ijl}+\ell}_{ij}\big)= d_{il} +
  \mu_\pm\big(\Phi^\ell_{ij}\big)$  when $0<\ell\leq \ell_0$;

\item[\reflb{IRA-}{\rm{(IR3)}}]
  $\mu_+\big(\Phi^{k_{ijl}-\ell}_{ij}\big)= d_{il} -
  \mu_-\big(\Phi^\ell_{ij}\big)+ \big(\beta_+\big(\Phi^\ell_{ij}\big)-
  \beta_-\big(\Phi^\ell_{ij}\big)\big)$, where
  $0<\ell\leq \ell_0<k_{ijl}$ and $\beta_+-\beta_-=b_+-b_-\leq m$ with
  $\beta_\pm=0$ when $\Phi_{ij}^\ell$ is non-degenerate;

\item[\reflb{IRAsummand}{\rm{(IR4)}}] Assertions
  \ref{IRA0}--\ref{IRA-} continue to hold with the same values of
  $d_{il}$ and $k_{ijl}$ when $\Phi_{ij}$ is replaced by its
  non-degenerate part $\Psi$ with all invariants of $\Phi_{ij}$ and
  its iterates replaced by their counterparts for $\Psi$, i.e., $m$
  replaced by $m'$, $\mu_\pm\big(\Phi^{k_{ijl}}_{ij}\big)$ replaced by
  $\mu\big(\Psi^{k_{ijl}}\big)$, $\beta_\pm$ set to be 0, etc.;
  
\item[\reflb{IRAa}{\rm{(IR5)}}] $C_l-\eta< k_{ijl}a_{ij}< C_l$, and
  $ka_{ij}<C_l-\eta$ when $k<k_{ijl}$ and $ka_{ij}>C_l$ when
  $k>k_{ijl}$.
\end{itemize}
Moreover, we can ensure that the sequences $C_l$, $d_{il}$ and
$k_{ijl}$ are strictly increasing and have bounded gap.  In addition,
we can make all $k_{ijl}$ and $d_{il}$ divisible by any pre-assigned
integer and all $C_l$ lie outside the spectrum $\CS$.
\end{Theorem}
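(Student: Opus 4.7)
I would split the argument into a Diophantine construction of the sequences $C_l, d_{il}, k_{ijl}$ satisfying \ref{IRA0} and \ref{IRAa}, followed by an index-theoretic verification of \ref{IRA+}, \ref{IRA-}, \ref{IRAsummand}.

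For the Diophantine step, write $a_{ij} = \rho_i \Delta_{ij}$, where $\rho_i > 0$ is the common ratio defining cluster $i$. Setting $d_{il} := [C_l/\rho_i]$ (nearest integer) and $k_{ijl} := [d_{il}/\Delta_{ij}]$, the conditions $|k_{ijl}\Delta_{ij} - d_{il}| < \eta$ and $|\rho_i d_{il} - C_l| < \eta$ translate into a demand that the torus-valued map
\[
C \ \longmapsto\ \Big( \big(C/\rho_i \bmod 1\big)_i,\ \big([C/\rho_i]/\Delta_{ij} \bmod 1\big)_{ij} \Big)
\]
lies in a small prescribed open neighborhood of $0$. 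By Weyl equidistribution for the orbit closure of the underlying linear flow on a suitable torus, this neighborhood is visited along a syndetic (and hence bounded-gap) set of parameters $C \to \infty$, producing $C_l$. The side conditions of avoiding the spectrum $\CS$ and divisibility of $k_{ijl}, d_{il}$ by a prescribed integer are imposed by shrinking the neighborhood and passing to a finite-index subtorus, both of which preserve syndeticity. Property \ref{IRA0} follows from $|\hmu(\Phi_{ij}^{k_{ijl}}) - d_{il}| < \eta$ via \eqref{eq:mu-del}, and \ref{IRAa} is immediate from $k_{ijl}a_{ij} = \rho_i(k_{ijl}\Delta_{ij})$ together with the two approximation bounds (a small perturbation of $C_l$ within its syndetic set enforces the strict one-sided inequalities).

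For the index-theoretic step, fix $ij$ and abbreviate $k=k_{ijl}$, $d=d_{il}$. Decompose $\Phi := \Phi_{ij} = \Psi \oplus \Xi$ into its non-degenerate part $\Psi$ and totally degenerate part $\Xi$. The invariants $b_*, \nu_0, \beta_\pm$ of $\Xi$ are preserved under iteration once the order is admissible, which we arrange via the prescribed divisibility of $k$; hence $\beta_\pm(\Phi^k) = \beta_\pm(\Phi)$ after restriction to $\Xi$, by \eqref{eq:beta-it}. The condition $|\hmu(\Phi^k) - d| < \eta$, combined with the fact that $\hmu(\Psi^k) = k\hmu(\Psi)$ is an integer (since $\Psi$ has no unit eigenvalues), forces $\hmu(\Xi^k)$ to lie within $\eta$ of the integer $d - k\hmu(\Psi)$. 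Thus $\Xi^k$ can be homotoped in $\tSp$, through totally degenerate paths with the same $\beta_\pm$, to a path with mean index exactly that integer, and the deformed path then sits as a direct summand of $\Phi^{k+\ell}$ for $|\ell| \le \ell_0$. Applying additivity \eqref{eq:add}, \eqref{eq:add2} and the formulas \eqref{eq:bpm}--\eqref{eq:bpm2} yields
\[
\mu_\pm(\Phi^{k+\ell}) = d + \mu_\pm(\Phi^\ell), \qquad 0 < \ell \le \ell_0,
\]
which is \ref{IRA+}. Replacing $\ell$ by $-\ell$ and invoking $\beta_\pm(\Phi^{-1}) = \beta_\mp(\Phi)$ produces \ref{IRA-} with the asymmetric correction $\beta_+ - \beta_-$ coming from the reversal of the degenerate summand; assertion \ref{IRAsummand} is the same calculation restricted to the non-degenerate summand, with $\beta_\pm = 0$.

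The main obstacle is the second step: the indices $\mu_\pm$ are additive only under direct sum, yet the conclusion concerns the composed path $\Phi^{k+\ell} = \Phi^k\cdot\Phi^\ell$. The crux is that the recurrence condition $|k\Delta - d| < \eta$ is engineered precisely so that $\Phi^k$ becomes homotopic, within $\tSp$, to a direct summand of $\Phi^{k+\ell}$ after resolving the totally degenerate block via the $Q_0, Q_\pm$ normal forms --- this is the geometric content of the common-index-jump theorem, refined here to incorporate the extra action-matching and bounded-gap constraints. By comparison, the arithmetic construction is routine once the torus is set up correctly, and the bounded-gap strengthening over \cite[Thm.\ 5.2]{GG:LS} is a direct consequence of Weyl equidistribution rather than mere Kronecker density.
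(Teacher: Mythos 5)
The proposal follows the right high-level outline --- separate a Diophantine construction of $C_l, d_{il}, k_{ijl}$ from an index-theoretic verification, exploit additivity under direct sums, and isolate the totally degenerate block --- but both halves contain genuine gaps.

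In the Diophantine step, your torus-valued map only controls the two aggregate conditions $|k_{ijl}\Delta_{ij} - d_{il}| < \eta$ and $|\rho_i d_{il} - C_l| < \eta$. But \ref{IRA+} and \ref{IRA-} require simultaneous Diophantine control over each individual elliptic eigenvalue: one needs $\|k_{ijl}\lambda_{ijq}\| < \eps$ for every $q$, and since $\hmu$ is $2\sum_q \lambda_q$, closeness of $k\hmu$ to an integer does not imply closeness of each $k\lambda_q$ to an integer. You would need a higher-dimensional torus with one coordinate per elliptic eigenvalue, and then express the constraint on $k_{ijl}$ --- which in your scheme is a nested nearest-integer function of $C$ --- in a form to which equidistribution actually applies. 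That nesting is not routine. The paper avoids it by treating the $k_{ij}$ directly as integer variables in a Minkowski-type system of linear inequalities (Lemma \ref{lemma:Mink}), which packages all the constraints, including the per-eigenvalue ones and the inter-cluster action matching, at once.

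In the index-theoretic step, the claim that ``$\hmu(\Psi^k) = k\hmu(\Psi)$ is an integer (since $\Psi$ has no unit eigenvalues)'' is false: the mean index of a non-degenerate path is an integer only when the elliptic eigenvalues are $\pm 1$ or absent, and a generic elliptic eigenvalue $e^{2\pi\sqrt{-1}\lambda}$ with $\lambda$ irrational contributes $2\lambda \notin \Q$. The irrational case is precisely what makes the theorem non-trivial, so this assumption collapses the hard part. The subsequent step --- homotoping $\Xi^k$ to a path of a different mean index, and placing $\Phi^k$ as a ``direct summand of $\Phi^{k+\ell}$'' --- does not parse: the mean index is a homotopy invariant in $\tSp$ and cannot be altered by homotopy, and $\Phi^k$ and $\Phi^{k+\ell}$ both lie in $\tSp(2m)$ for the same $m$, so neither can be a direct summand of the other. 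What the paper actually does is decompose $\Phi$ further into a loop, a hyperbolic part $\Phi_h$, and an elliptic part $\Phi_e$, then split $\Phi_e$ into short rotations $R_q$ and verify by an explicit floor-function calculation that $\|k\lambda\| < \eps_0$ implies $\mu(R^{k\pm\ell}) = [\hmu(R^k)] \pm \mu(R^\ell)$ (formula \eqref{eq:IR-SP2}); that calculation, not additivity, is where the recurrence is established. You also do not address the case where the non-degenerate elliptic eigenvalues are roots of unity (the paper's Subcase 2), where $\Phi_1^\ell$ can be degenerate for some $\ell$ and the identity \eqref{eq:mu-Phi1} requires a separate isospectral deformation argument together with the divisibility of $k$ by $D$.
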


In what follows we will refer to the collection
$\big\{k_{ijl},d_{il},C_l\big\}$ for a fixed $l$ as a \emph{recurrence
  event}.

\begin{Remark}
  Condition \ref{IRAsummand} in Theorem \ref {thm:IR-A} and its
  corollaries can be generalized a bit further. Namely,
  \ref{IRA0}--\ref{IRA-} continue to hold with the same values of
  $d_{il}$ and $k_{ijl}$ when $\Phi_{ij}$ is replaced by any
  $\Psi\in\TSp(2m')$ such that $\Phi(1)=\Psi(1)\oplus A$, where
  $m'\leq m$ and $A\in \Sp\big(2(m-m')\big)$ and
  $\hmu(\Psi)=\hmu(\Phi)$. As in \ref{IRAsummand}, we also need to
  replace $m$ by $m'$, $\mu_\pm\big(\Phi^{k_{ijl}}_{ij}\big)$ by
  $\mu_\pm\big(\Psi^{k_{ijl}}\big)$,
  $\beta_\pm\big(\Phi^{k_{ijl}}_{ij}\big)$ by
  $\beta_\pm\big(\Psi^{k_{ijl}}\big)$, etc. This fact readily follows
  from the proof of the theorem.
\end{Remark}  

Let us now state some of the corollaries of Theorem \ref{thm:IR-A} and
show how this theorem is related to several previous results of the
same type.

\begin{Corollary}
  \label{cor:IR-A-DC}
  In the setting of Theorem \ref{thm:IR-A}, assume that all maps
  $\Phi_{ij}$ are dynamically convex and $\ell_0$ is taken large
  enough. (For instance, $2\ell_0\geq 3(m+1)$.) Then as a formal
  consequence of \ref{IRA+}, \ref{IRA-} and \ref{IRAa}, we have
  
\begin{itemize}

\item[\reflb{IRA+DC}{\rm{(IR2')}}]
  $\mu_-\big(\Phi^{k}_{ij}\big)\geq d_{il} + m+2$ and $ka_{ij}>C_l$
  when $k>k_{ijl}$;

\item[\reflb{IRA-DC}{\rm{(IR3')}}]
  $\mu_+\big(\Phi^{k}_{ij}\big)\leq d_{il} - 2$ and $ka_{ij}<C_l-\eta$
  when $k<k_{ijl}$, and $\mu\big(\Phi^{k}_{ij}\big)\leq d_{il} - m-2$
  when in addition $\Phi_{ij}$ is strongly non-degenerate.

\end{itemize}
In particular, for any $i$, none of the intervals
$\big[ \mu_-\big(\Phi^k_{ij}\big), \mu_+\big(\Phi^k_{ij}\big)\big]$,
$k\in\N$, contains the point $d_{il}+m+1$. When $\Phi_{ij}$ is in
addition strongly non-degenerate this is also true for
$d_{il}-m-1$. Furthermore, $k a_{ij}\in (C_l-\eta, C_l)$ only when
$k=k_{ij}$.
\end{Corollary}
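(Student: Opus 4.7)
The plan is to deduce \ref{IRA+DC} and \ref{IRA-DC} as formal consequences of the five conclusions \ref{IRA0}--\ref{IRAa} of Theorem \ref{thm:IR-A}, combined with dynamical convexity via Lemma \ref{lemma:DC}, the bound \eqref{eq:beta+-beta-}, and the quasimorphism estimate $\mu_+(\Phi^k)\leq k\hmu(\Phi)+m$ from \eqref{eq:mu-del}. For \ref{IRA+DC}, first apply \ref{IRA+} with $\ell=1$: $\mu_-\big(\Phi^{k_{ijl}+1}_{ij}\big)=d_{il}+\mu_-(\Phi_{ij})\geq d_{il}+m+2$ by dynamical convexity. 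For $k>k_{ijl}+1$, iterate Lemma \ref{lemma:DC}, which under DC gives $\mu_-\big(\Phi^{k+1}_{ij}\big)\geq \mu_-\big(\Phi^{k}_{ij}\big)+2$, thereby extending the bound to all $k>k_{ijl}$. The action statement $ka_{ij}>C_l$ is immediate from \ref{IRAa}.

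For \ref{IRA-DC} I would split the range $k<k_{ijl}$ into the subranges $k_{ijl}-\ell_0\leq k<k_{ijl}$ and $k\leq k_{ijl}-\ell_0-1$. In the first subrange, write $k=k_{ijl}-\ell$ with $1\leq\ell\leq\ell_0$ and apply \ref{IRA-}; Lemma \ref{lemma:DC} gives $\mu_-\big(\Phi^\ell_{ij}\big)\geq m+2\ell$, and \eqref{eq:beta+-beta-} gives $\beta_+\big(\Phi^\ell_{ij}\big)-\beta_-\big(\Phi^\ell_{ij}\big)\leq m$, so
\[
\mu_+\big(\Phi^{k}_{ij}\big)\leq d_{il}-(m+2\ell)+m=d_{il}-2\ell\leq d_{il}-2;
\]
when $\Phi_{ij}$ is strongly non-degenerate, $\beta_\pm=0$ and the bound sharpens to $\mu\big(\Phi^{k}_{ij}\big)\leq d_{il}-m-2\ell\leq d_{il}-m-2$. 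For $k\leq k_{ijl}-\ell_0-1$, I would combine $\mu_+\big(\Phi^k_{ij}\big)\leq k\hmu(\Phi_{ij})+m$ from \eqref{eq:mu-del} with the \ref{IRA0} estimate $k_{ijl}\hmu(\Phi_{ij})\leq d_{il}+\eta$ and $\hmu(\Phi_{ij})\geq 2$ (from DC, via Lemma \ref{lemma:DC}), yielding
\[
\mu_+\big(\Phi^{k}_{ij}\big)\leq d_{il}+\eta-2(\ell_0+1)+m.
\]
The assumption $2\ell_0\geq 3(m+1)$ makes the right-hand side at most $d_{il}-2$ (for $\eta$ small), and the same estimate applied with $\mu=\mu_\pm$ gives $d_{il}-m-2$ in the strongly non-degenerate case. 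The action bound $ka_{ij}<C_l-\eta$ is again immediate from \ref{IRAa}.

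The remaining assertions are then direct case checks. For the point $d_{il}+m+1$: at $k=k_{ijl}$ one uses \ref{IRA0} (which gives $\mu_+\leq d_{il}+m$); for $k>k_{ijl}$ one uses \ref{IRA+DC} ($\mu_-\geq d_{il}+m+2$); for $k<k_{ijl}$ one uses \ref{IRA-DC} ($\mu_+\leq d_{il}-2$). The strongly non-degenerate analog for $d_{il}-m-1$ uses $\mu\geq d_{il}-m$ from \ref{IRA0} and $\mu\leq d_{il}-m-2$ from \ref{IRA-DC}. Uniqueness of $k$ with $ka_{ij}\in(C_l-\eta,C_l)$ is immediate from \ref{IRAa}. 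The only place requiring genuine care is the long-range subcase $k\leq k_{ijl}-\ell_0-1$ of \ref{IRA-DC}, where the hypothesis $2\ell_0\geq 3(m+1)$ is precisely what furnishes enough headroom between $k\hmu(\Phi_{ij})$ and $d_{il}$; everything else is bookkeeping.
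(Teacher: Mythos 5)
Your proof is correct, and for \ref{IRA+DC} it takes a cleaner route than the paper. Where the paper handles $k>k_{ijl}$ by splitting into a near range $k_{ijl}<k\le k_{ijl}+\ell_0$ (via \ref{IRA+} and DC) and a far range $k>k_{ijl}+\ell_0$ (via a triangle inequality with the mean index, using \ref{IRA0} and the largeness of $\ell_0$), you apply \ref{IRA+} only at $\ell=1$ to get $\mu_-\big(\Phi^{k_{ijl}+1}_{ij}\big)\ge d_{il}+m+2$ and then propagate forward with the iteration inequality $\mu_-\big(\Phi^{k+1}\big)\ge\mu_-\big(\Phi^{k}\big)+\big(\mu_-(\Phi)-m\big)\ge\mu_-\big(\Phi^{k}\big)+2$ from Lemma \ref{lemma:DC}. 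This avoids the far-range triangle inequality altogether and, as you correctly observe, makes the hypothesis $2\ell_0\ge 3(m+1)$ unnecessary for \ref{IRA+DC}; it is only needed for the far range of \ref{IRA-DC}, where your argument and the paper's coincide (both bound $\mu_+\big(\Phi^k_{ij}\big)$ by $d_{il}+\eta+(k-k_{ijl})\hmu(\Phi_{ij})+m$ and then use $\hmu\ge 2$ and $k-k_{ijl}\le -\ell_0-1$). Your near-range estimate for \ref{IRA-DC} via Lemma \ref{lemma:DC}'s bound $\mu_-\big(\Phi^\ell\big)\ge m+2\ell$ is also slightly sharper than strictly required. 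The verification of the ``In particular'' assertions is the same bookkeeping as in the paper.
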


\begin{proof}
  Only the index part in \ref{IRA+DC} and \ref{IRA-DC} requires a
  proof. The ``action'' part follows directly from \ref{IRAa}. By
  \ref{IRA0}, the ``In particular'' part of the corollary is
  essentially a consequence of \ref{IRA+DC} and \ref{IRA-DC}.

  Recall from Lemma \ref{lemma:DC} that $\hmu(\Phi_{ij})\geq 2>0$ due
  to the dynamical convexity condition. Fix $\ell_0$ so that
$$
\ell_0\min_{ij}\hmu(\Phi_{ij})\geq 2\ell_0\geq 3(m+1).
$$
Then, when $k> k_{ijl}+\ell_0$, by the triangle inequality and
\eqref{eq:mu-del},
\begin{equation*}
  \begin{split}
    \mu_-(\Phi^{k}_{ij})-d_{il} & \geq
    \big(\hmu(\Phi^{k}_{ij})-\hmu(\Phi^{k_{ijl}}_{ij})\big)
    -\big|\mu_-(\Phi^{k}_{ij})- \hmu(\Phi^{k}_{ij}) \big|
    - \big|\hmu(\Phi^{k_{ijl}}_{ij})-d_{il}\big|\\
    &\geq
    (k-k_{ijl})\hmu(\Phi_{ij})-m-\eta\\
    &\geq
    2(k-k_{ijl})-m-1\\
    &\geq 2m+2.
\end{split}
\end{equation*}
When $k=k_{ijl}+\ell$ with $1\leq \ell\leq \ell_0$, we have 
$$
\mu_-\big(\Phi^{k}_{ij}\big)= d_{il} +
\mu_-\big(\Phi^{\ell}_{ij}\big)\geq d_{il} + m+2
$$
by \ref{IRA+} and the dynamical convexity assumption. This proves
\ref{IRA+DC}.

Condition \ref{IRA-DC} is established by a similar
calculation. Namely, for $k< k_{ijl}-\ell_0$,
\begin{equation*}
  \begin{split}
    \mu_+(\Phi^{k}_{ij})- d_{il} & \leq
    \big(\hmu(\Phi^{k}_{ij})-\hmu(\Phi^{k_{ijl}}_{ij})\big)
    +\big|\mu_+(\Phi^{k}_{ij})-\hmu(\Phi^{k}_{ij}) \big|
    +\big|\hmu(\Phi^{k_{ijl}}_{ij})-d_{il}\big|\\
    &\leq
    (k-k_{ijl})\hmu(\Phi_{ij})+m+\eta\\
    &\leq
    2(k-k_{ijl})+m+1\\
    &\leq -2m-2.
\end{split}
\end{equation*}
When $k=k_{ijl}-\ell$ with $1\leq \ell\leq \ell_0$, we have
\begin{equation*}
  \begin{split}
    \mu_+\big(\Phi^{k}_{ij}\big) & = d_{il}
    -\mu_-\big(\Phi^{\ell}_{ij}\big) +
    \big(\beta_+\big(\Phi^\ell_{ij}\big)
    -\beta_-\big(\Phi^\ell_{ij}\big) \big) \\
    & \le
    d_{il} -m-2+m \\ & = d_{il} -2    
\end{split}
\end{equation*}
by \ref{IRA-} and again the dynamical convexity assumption, proving
\ref{IRA-DC}. When $\Phi_{ij}$ is strongly non-degenerate, the term
$\beta_+-\beta_-$ disappears and we obtain the upper bound
$d_{il}-m-2$.
\end{proof}

Setting $a_{ij}:=\hmu(\Phi_{ij})$, we force all maps $\Phi_{ij}$ to be
in one cluster, and then we can suppress $i$ in the notation, labeling
the maps as $\Phi_j$. In particular, $d_{il}$ becomes independent of
$i$. Thus, the above results immediately yield the following.

\begin{Corollary}
  \label{cor:IR}
  In the setting of Theorem \ref{thm:IR-A} or Corollary
  \ref{cor:IR-A-DC}, for a collection of maps $\Phi_j\in\TSp(2m)$, we
  can find bounded gap sequences $C_l$ and $d_l$ independent of $j$,
  and bounded gap sequences $k_{jl}$ such that all other conditions
  are satisfied. In particular, the sequences $k_{jl}$ and $d_l$ can
  be made divisible by any fixed integer. Furthermore, with
  $\Delta_j:=\hmu(\Phi_j)$ condition \ref{IRAa} takes the following
  form: $C_l-\eta< k_{jl}\Delta_{j}< C_l$, and $k\Delta_{j}<C_l-\eta$
  when $k<k_{jl}$ and $k\Delta_{j}>C_l$ when $k>k_{jl}$.
\end{Corollary}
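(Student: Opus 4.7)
The plan is to apply Theorem \ref{thm:IR-A} directly to the input data, making the specific choice of actions $a_j := \hmu(\Phi_j) = \Delta_j$. With this choice, every pair $(\Phi_j, a_j)$ has ratio $a_j/\Delta_j = 1$, so by the clustering rule in Theorem \ref{thm:IR-A} all the pairs lie in a single cluster. In the notation of the theorem, this means $i_0 = 1$, and hence we can suppress the cluster index $i$ entirely, writing $\Phi_j$ for $\Phi_{1j}$, $k_{jl}$ for $k_{1jl}$, and $d_l$ for $d_{1l}$.

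Once this relabeling is made, conclusions \ref{IRA0}--\ref{IRA-} of Theorem \ref{thm:IR-A} become exactly the index statements claimed in the corollary, with $d_l$ now independent of $j$ by construction. The bounded-gap and divisibility properties of $C_l$, $d_l$, and $k_{jl}$ are inherited directly from the corresponding statements in Theorem \ref{thm:IR-A}. To obtain the stated form of \ref{IRAa}, we substitute $a_j = \Delta_j$ into the inequalities $C_l - \eta < k_{jl} a_j < C_l$, $k a_j < C_l - \eta$ for $k < k_{jl}$, and $k a_j > C_l$ for $k > k_{jl}$, which immediately yields the action/mean-index inequalities in the statement.

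For the part corresponding to Corollary \ref{cor:IR-A-DC}, when the $\Phi_j$ are dynamically convex we simply invoke that corollary with the same single-cluster choice $a_j = \Delta_j$ and the same choice of $\ell_0$ (for example $2\ell_0 \geq 3(m+1)$); conditions \ref{IRA+DC} and \ref{IRA-DC} then hold with $d_{il}$ replaced by $d_l$. In particular, the conclusions about $\mu_-(\Phi_j^k) \geq d_l + m + 2$ for $k > k_{jl}$ and the corresponding upper bound for $k < k_{jl}$ carry over verbatim.

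There is essentially no obstacle: the corollary is a direct relabeling of Theorem \ref{thm:IR-A} in the degenerate case $i_0 = 1$, and no new analytic content is needed. The only point worth checking carefully is that the simultaneous choice $a_j = \Delta_j$ is admissible as input to Theorem \ref{thm:IR-A} (it is, since each $\Delta_j > 0$ by the convention $\hmu(\Phi_j) > 0$ in the theorem's setup, which in the dynamically convex case is further guaranteed by Lemma \ref{lemma:DC}), and that the bounded-gap and divisibility conclusions of the theorem survive the specialization, which they do because neither property depends on the cluster index.
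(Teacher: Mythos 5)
Your proof is correct and follows the paper's own argument: set $a_j := \hmu(\Phi_j)$ so that all maps fall into a single cluster, suppress the cluster index $i$, and read off the conclusions of Theorem \ref{thm:IR-A} (and Corollary \ref{cor:IR-A-DC} in the dynamically convex case). No difference in approach.
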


Thus, in this case, a recurrence event takes a simpler form
$\big\{k_{jl},d_{l},C_l\big\}$. In fact, the sequence $C_l$ is
somewhat redundant now and carries essentially the same information as
the sequence $d_l:= [ k_{jl}\Delta_{j}]$.

The new point of Corollary \ref{cor:IR} compared to the index
  recurrence theorem from \cite[Thm.\ 5.2]{GG:LS} is the assertion
that all sequences involved have bounded gap. A simple proof from
\cite{GG:LS} of the following multiplicity result, quite standard now,
is an illustrative application of the index recurrence theorem in the
form of Corollaries \ref{cor:IR-A-DC} and \ref{cor:IR}. Interestingly,
the argument relies on the existence of just one recurrence event.

\begin{Corollary}[Multiplicity; \cite{LZ} and \cite{GK}]
    \label{cor:multiplicity}
    Assume that the Reeb flow on the boundary $M$ of a star-shaped
    domain $W\subset \R^{2n}$ is non-degenerate and dynamically
    convex. Then the flow has at least $n$ prime closed orbits. If, in
    addition, the flow is a pseudo-rotation, there are $n$
    non-alternating prime closed orbits with index of the same parity
    as $n+1$.
\end{Corollary}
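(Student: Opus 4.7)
The plan is to apply Corollary~\ref{cor:IR} to the collection of linearized return maps of the prime closed Reeb orbits and combine the resulting localization of Conley--Zehnder indices at a single recurrence event with the shape \eqref{eq:CH} of $\COH^+(W;\Q)$ through the spectral sequence \eqref{eq:spec-CH}. Assume first that the flow has only finitely many prime closed orbits $x_1,\ldots,x_N$ (the case $N=\infty$ reduces to this by a truncation argument on a sufficiently rich finite subfamily). Let $\Phi_j\in\tSp(2(n-1))$ denote the linearized Poincar\'e return map of $x_j$ and $\CA_j=\CA(x_j)$ its period. Apply Corollary~\ref{cor:IR} to the family $(\Phi_j,\CA_j)$, requiring that both $k_{jl}$ and $d_l$ be divisible by $2$. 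For large $l$, the strongly non-degenerate case of Corollary~\ref{cor:IR-A-DC} gives $\mu(x_j^k)\notin[d_l-n,\,d_l+n]$ for every $j$ and every $k\neq k_{jl}$, while $\mu(x_j^{k_{jl}})\in[d_l-(n-1),\,d_l+(n-1)]$ for every $j$. Since $k_{jl}$ is even, the iterate $x_j^{k_{jl}}$ is bad whenever $x_j$ is alternating, hence contributes zero to $\COH$; for non-alternating $x_j$, the group $\COH(x_j^{k_{jl}};\Q)$ is one-dimensional and supported in the single degree $\mu(x_j^{k_{jl}})$, whose parity equals that of $\mu(x_j)$.

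By \eqref{eq:CH}, $\COH_d^+(W;\Q)=\Q$ for every $d\ge n+1$ of parity $n+1$, and the spectral sequence \eqref{eq:spec-CH} then forces $\dim E^1_d\ge 1$ for every such $d$. Inside the window $[d_l-n,\,d_l+n]$, the $E^1$-contributions are, by the previous paragraph, exactly the ones coming from the non-alternating $x_j^{k_{jl}}$. Since $d_l$ is even, a direct parity count shows that this window contains exactly $n$ integers of the parity of $n+1$, that all of them lie inside the attainable sub-window $[d_l-(n-1),\,d_l+(n-1)]$, and that all of them are $\ge n+1$ for $l$ large. Each of these $n$ degrees must therefore be realized by $\mu(x_j^{k_{jl}})$ for some non-alternating $x_j$ whose Conley--Zehnder index has the parity of $n+1$, and distinct degrees force distinct prime orbits. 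Consequently, there are at least $n$ non-alternating prime closed orbits with index of the parity of $n+1$.

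The main obstacle, and the reason the divisibility refinement in Corollary~\ref{cor:IR} is essential, is to arrange three conditions simultaneously at one and the same recurrence event: the alternating orbits must drop out of the count; the $n$ degrees forced by \eqref{eq:CH} must all fall inside the reach $[d_l-(n-1),\,d_l+(n-1)]$ of the allowed iterates $x_j^{k_{jl}}$; and the parity of $n+1$ must match the parity that non-alternating iterates can produce. All three are secured by insisting that $k_{jl}$ and $d_l$ be even, after which the argument collapses to the parity count above.
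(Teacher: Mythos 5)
Your proposal is correct and follows essentially the same argument as the paper: apply Corollary~\ref{cor:IR} with $d_l$ and the $k_{jl}$ forced to be even, use Corollary~\ref{cor:IR-A-DC} to isolate the iterates $x_j^{k_{jl}}$ as the only orbits with index in the window $[d_l-n+1,d_l+n-1]$, then invoke \eqref{eq:CH} together with the spectral sequence \eqref{eq:spec-CH} to force each of the $n$ degrees of parity $n+1$ in that window to be realized by a (necessarily distinct, non-alternating) prime orbit. The only cosmetic remark is that Corollary~\ref{cor:IR} takes as input the maps $\Phi_j$ alone (it internally sets $a_j=\hmu(\Phi_j)$), so feeding in the actual periods $\CA_j$ is unnecessary, but since you never use the action component the argument is unaffected.
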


\begin{proof}
  Without loss of generality, we may assume that there are only
  finitely many prime closed orbits $x_i$ in $M$. Denote the
  linearized flows along these orbits by $\Phi_1,\ldots,
  \Phi_r$. Consider one recurrence event from Corollary \ref{cor:IR}
  for these elements of $\Sp(2(n-1))$. Suppressing $l$ in the notation
  and ignoring $C_l$, we denote this event by $\{d,k_1,\ldots,k_r\}$
  and require $d$ and all $k_j$ to be divisible by two. Since
  $\COH_{n-1+2i}(W;\Q)\neq 0$, $i\in \N$, there must be at least one
  closed orbit with index $n-1+2i$. There are exactly $n$ such integer
  points in the range $\CI:=[d-n+1,d+n-1]$. By Corollary
  \ref{cor:IR-A-DC}, the only closed Reeb orbits with index in $\CI$
  are $x_j^{k_j}$. Moreover, since $k_j$ is even, to contribute to the
  symplectic homology, $x_j$ must be non-alternating and hence, the
  parity of $\mu(x_j)$ must be the same as that of
  $\mu\big(x_j^{k_j}\big)$, i.e., $n+1$. Therefore, there are at least
  $n$ non-alternating prime closed orbits with parity $n+1$.
  \end{proof}

A more subtle application allows one to remove the non-degeneracy
condition when $W$ is centrally symmetric and convex.

\begin{Corollary}[Multiplicity in the convex, symmetric case;
  \cite{LLZ}]
    \label{cor:multiplicity-sym}
    Assume that $W$ is centrally symmetric and strictly convex. Then
    the Reeb flow on $M=\p W$ has at least $n$ prime closed orbits.
\end{Corollary}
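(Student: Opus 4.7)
The plan is to adapt the argument from Corollary \ref{cor:multiplicity} to the degenerate setting, using central symmetry to compensate for the loss of non-degeneracy. Arguing by contradiction, assume the flow has only finitely many prime closed orbits $x_1,\ldots,x_r$ with $r<n$. By \cite{HWZ:convex}, strict convexity implies dynamical convexity, so Corollary \ref{cor:IR-A-DC} applies. The antipodal map $-\id$ preserves $M$ and commutes with the Reeb flow, so it permutes the prime orbits; let $x_1,\ldots,x_p$ be the symmetric ones (with $-x_i(t)=x_i(t+T_i/2)$) and $x_{p+1},\ldots,x_r$ come in $q$ antipodal pairs, so $r=p+2q$.

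Next I would apply Corollary \ref{cor:IR} to the linearized return maps $\Phi_1,\ldots,\Phi_r\in\tSp(2(n-1))$ with a small $\eta>0$ and sufficiently large $\ell_0$, arranging that $d_l$ and all $k_{jl}$ are even and that $k_{jl}$ is admissible for each $x_j$. For each recurrence event $l$, \ref{IRA0} places the indices $\mu_\pm\big(x_j^{k_{jl}}\big)$ inside the interval $\CI_l:=[d_l-(n-1),d_l+(n-1)]$, so by \eqref{eq:support2} each support $\supp\COH\big(x_j^{k_{jl}};\Q\big)$ is contained in $\CI_l$. Corollary \ref{cor:IR-A-DC} ensures that no other iterate $x_j^k$ has period in $(C_l-\eta,C_l]$ or index within $\CI_l$. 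On the other hand, by \eqref{eq:CH}, \eqref{eq:spec-CH}, Theorem \ref{thm:vanishing}, and the bounded-gap feature of Corollary \ref{cor:IR}, the $n$ integers of parity $n+1$ inside $\CI_l$ must all be covered by $\supp\COH\big(x_j^{k_{jl}};\Q\big)$ for some $j$.

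The central symmetry then enters twofold. First, for a symmetric orbit $x_i$, the relation $-x_i(t)=x_i(t+T_i/2)$ makes the linearized return map $\Phi_i^{k_{il}}$ conjugate to $(-\id\cdot\Psi_i)^{k_{il}}$ for some $\Psi_i$; combined with the normal-form additivity of $\beta_\pm$ from Section \ref{sec:CZ-LA}, this restricts $\supp\COH\big(x_i^{k_{il}};\Q\big)$ to integers of a single parity in $\CI_l$, so $x_i^{k_{il}}$ can cover at most one of the $n$ required degrees. Second, by Lemma \ref{lemma:chieq-it} applied at the admissible iterate $k_{jl}$, the equivariant Euler characteristic $\chieq\big(x_j^{k_{jl}}\big)=\chieq(x_j)$ is non-zero precisely when the orbit ``counts,'' and for an antipodal pair the two orbits have identical return maps and hence identical contributions, so the pair covers at most two required degrees between them. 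A counting argument analogous to that of Corollary \ref{cor:multiplicity}, applied to a single recurrence event $l$, would then give $p+2q\geq n$, i.e., $r\geq n$, the desired contradiction.

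The main obstacle is making rigorous the single-parity support claim for iterated symmetric orbits: in the degenerate case the support of $\COH\big(x_i^{k_{il}};\Q\big)$ a priori spans an interval of length up to $2(n-1)$, and one must exploit the $\Z_2$-equivariance of the return map on a neighborhood of $x_i$ (coming from the global antipodal symmetry) to cut this down to a single parity class. This amounts to an equivariant refinement of the index-iteration formulas \eqref{eq:bpm}--\eqref{eq:bpm2} for symplectic paths admitting a $-\id$ symmetry, together with a careful application of Lemma \ref{lemma:iterates} in the presence of the additional $\Z_2$ symmetry; this is where the interplay between the index recurrence in the form of Theorem \ref{thm:IR-A} and the central symmetry hypothesis must be handled most delicately.
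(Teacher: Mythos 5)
Your proposal does not match the paper's argument, and it contains a concrete error at the central step. You assert that ``Corollary \ref{cor:IR-A-DC} ensures that no other iterate $x_j^k$ has period in $(C_l-\eta,C_l]$ or index within $\CI_l$.'' The period part is right, but the index part is not: in the degenerate setting \ref{IRA-DC} only gives $\mu_+\big(\Phi_j^k\big)\leq d_l-2$ for $k<k_{jl}$ (the sharper bound $\leq d_l-m-2$ is available \emph{only} for strongly non-degenerate $\Phi_j$). Since $\supp\COH\big(x_j^k;\Q\big)\subseteq[\mu_-,\mu_+]$ can therefore still reach as high as $d_l-2$, it may well overlap $\CI_l=[d_l-n+1,d_l+n-1]$ as soon as $n\geq 3$. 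So degenerate lower iterates are not excluded by dynamical convexity alone, and the counting collapses.

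The missing ingredient, which is precisely where symmetry and convexity enter in the paper's proof, is the \emph{strong dynamical convexity} inequality
$\mu_-\big(x_i^k\big)-\beta_+\big(x_i^k\big)+\beta_-\big(x_i^k\big)\geq n+1$ for all $k$ (see \cite[Sec.\ 4]{GM}, \cite[Lemma 4.1]{LLZ}, \cite[Lemma 15.6.3]{Lo}). Plugged into \ref{IRA-}, this strengthens the bound to $\mu_+\big(\Phi_j^{k_{jl}-\ell}\big)\leq d_l-n-1$, pushing the supports of all lower iterates strictly below $\CI_l$ even without non-degeneracy. The paper then finishes either via $S^1$-equivariant spectral invariants as in \cite{GG:LS}, or more directly via Corollary \ref{cor:SH}, which guarantees $\supp\COH\big(x_j^k;\Q\big)$ is at most a single point, so each prime orbit can fill at most one of the $n$ required slots in $\CI_l$.

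Your substitute mechanism, a ``single-parity support'' restriction for symmetric orbits, is not established in the paper (as you acknowledge), and even if true it would not suffice: $\CI_l$ contains $n$ integers of the required parity, so a single-parity support of width up to $2(n-1)$ could still cover all $n$ slots with one orbit. Likewise, the assertion that an antipodal pair ``covers at most two required degrees'' does not follow from the return maps being conjugate. The decomposition into symmetric versus antipodal orbits is not part of the paper's mechanism at this stage (it surfaces later, in the proof of Theorem \ref{thm:HZ}); here the central-symmetry hypothesis is used solely through the strong dynamical convexity inequality.
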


\begin{proof}[Outline of the proof following \cite{GM}] As in the
  proof of Corollary \ref{cor:multiplicity}, assume that the flow has
  only finitely many prime closed orbits. Keeping the notation from
  that proof, denote these orbits by $x_j$. The key observation is
  that with our conventions
  $\mu_-\big(x_i^k\big)-\beta_+\big(x_i^k\big)+\beta_-\big(x_i^k\big)\geq
  n+1$ for all $k\in\N$ as a consequence of symmetry and convexity;
  see \cite[Sec.\ 4]{GM}, \cite[Lemma 4.1]{LLZ} and \cite[Lemma
  15.6.3]{Lo}. (This fact is formalized in \cite{GM} as a
  symplectically invariant notion of \emph{strong dynamical
    convexity}.) Considering an index recurrence event, we now see
  that $\supp\COH\big(x_j^k;\Q\big)$ does not overlap with the
  interval $\CI$ unless $k=k_j$ which we can assume to be even. Using
  the $S^1$-equivariant spectral invariants over $\Q$ as in
  \cite{GG:LS}, we conclude that among the prime orbits $x_i$ there
  must be at least $n$ orbits to fill in all $n$ spots in
  $\CI$. Alternatively, we can use the fact that, by Corollary
  \ref{cor:SH}, $\supp\COH\big(x_j^k;\Q\big)$ contains at most one
  point for all $k\in\N$.
  \end{proof}

\begin{Remark}  
  As we have already mentioned in the introduction, Corollary
  \ref{cor:multiplicity} had been generalized and strengthened in
  several directions. When the non-degeneracy condition is removed, a
  proof similar to the above one but relying on equivariant spectral
  invariants gives a lower bound $\lfloor n/2\rfloor +1$. The argument
  can be further pushed (in a non-trivial way) to improve this lower
  bound to $\lceil n/2\rceil +1$; see \cite{Wa2} and also \cite{DL,
    GG:LS}. In dimensions 6 and 8, the lower bound is 3 and,
  respectively 4; see \cite{DL, Wa1, WHL}.) Furthermore, for
  non-degenerate flows, the dynamical convexity condition has been
  replaced by a much less restrictive index requirement, \cite{DLLW1,
    DLLW2, GGMa}. Corollary \ref{cor:multiplicity-sym} has also been
  refined and generalized in a variety of ways; see, e.g., \cite{AM,
    ALM, GM} and references therein.
 \end{Remark} 

Furthermore, arguing as in the proof of Corollary
 \ref{cor:multiplicity} and using Corollary \ref{cor:SH} and Theorem
 \ref{thm:Orbits}, we obtain the following.

\begin{Corollary}[HZ-conjecture]
    \label{cor:mult+HZ}
    Assume that the Reeb flow on the boundary of a star-shaped domain
    in $\R^{2n}$ is a dynamically convex pseudo-rotation and all
    closed orbits are $\Q$-visible.  Then the flow has exactly $n$
    prime closed orbits.
\end{Corollary}

\section{Proofs of the main results}
\label{sec:pfs}

Here we first derive Theorems \ref{thm:mult} and \ref{thm:HZ} from
Corollary \ref{cor:SH} and Theorem \ref{thm:Orbits} in Section
\ref{sec:pf-main1}. Then, in Section \ref{sec:pf-main2}, we establish
the latter as a consequence of Theorem \ref{thm:SH}. In Section
\ref{sec:pf-main3}, we turn to the proofs of Theorem \ref{thm:SH},
which is the central part of the argument, and Corollary \ref{cor:SH}.
Finally, Corollary \ref{cor:ellipsoids} is proved in Section
\ref{sec:ellipsoids-pf}

\subsection{Proofs of Theorems \ref{thm:mult} and \ref{thm:HZ}}
\label{sec:pf-main1} 
The proof of Theorem \ref{thm:mult} comprises two unequal parts with
most of the work done in the proof of the first part. Theorem
\ref{thm:HZ} is proved in Section \ref{sec:HZ-pf}.

\subsubsection{Proof of Theorem \ref{thm:mult}}
Our proof of the theorem actually establishes a more precise result
asserting the existence of exactly $n$ prime orbits satisfying certain
visibility conditions. Although this result is more technical, we
state and prove it here, for it sheds more light on the proof and is
of independent interest.

Let $y$ be a prime orbit and let $p(y)$ be the least common multiple
of the degrees of the roots of unity among the elliptic eigenvalues of
the return map of $y$. Thus $y^{p(y)}$ is a ``maximally degenerate''
iterate of $y$. Note that $y^{p(y)}$ can fail to be $\Q$-visible
even when $y$ is elliptic and non-degenerate; see Example
\ref{exam:iterates}.  Set $N(y):=2p(y)$.  We will call $y$
\emph{virtually $\Q$-visible} when $y^{N(y)}$ is $\Q$-visible. For
instance, a strongly non-degenerate prime orbit is virtually
$\Q$-visible if and only if it is non-alternating.  A totally
degenerate $\Q$-visible prime orbit is virtually $\Q$-visible. The
orbit from Example \ref{exam:iterates} is not. As of writing, it is
not known if a virtually $\Q$-visible prime orbit must be $\Q$-visible
even in the setting of Theorem \ref{thm:mult}.

\begin{Theorem}
  \label{thm:mult-refined}
  Assume that the Reeb flow on the boundary $M^{2n-1}\subset \R^{2n}$
  of a star-shaped domain is dynamically convex and has finitely many
  prime closed orbits. Then the flow has exactly $n$ virtually
  $\Q$-visible prime orbits. Moreover, these orbits $y$ have the same
  ratio $\CA(y)/\hmu(y)$.
\end{Theorem}

\begin{Remark}
  The $n$ virtually $\Q$-visible closed orbits from Theorem
  \ref{thm:mult-refined} have an additional property that
  $\hmu(y)/\hmu(y')\not\in \Q$ for any two such distinct orbits $y$
  and $y'$. Indeed, assume that this ratio is rational:
  $\hmu(y)/\hmu(y')=p/q$ for some positive integers $p$ and $q$. Let
  $m$ be a common multiple of $N(y)$ and $N(y')$. Then, by
  \eqref{eq:virst-vis} below, $y^{qm}$ and $(y')^{pm}$ are both
  $\Q$-visible. The iterates $y^{qm}$ and $(y')^{pm}$ have the same
  action (aka period), which contradicts Corollary \ref{cor:SH}.  This
  observation, added after we have learned about \cite{Wa:new},
  sharpens the main result from that paper, which in turn is a
  refinement of a theorem from \cite{HO}.
\end{Remark}

Part \ref{Mult} of Theorem \ref{thm:mult} immediately follows from
Theorem \ref{thm:mult-refined}. To prove \ref{Mult-nd}, recall that in
the non-degenerate case a prime orbit is virtually $\Q$-visible if and
only if it is non-alternating. Hence there are exactly $n$ such
orbits.  The parity statement is a consequence Part \ref{O-index} from
Theorem \ref{thm:Orbits} since now the degree is equal to the
Conley--Zehnder index. As we have already mentioned, the lower bound
in \ref{Mult-nd} is not new; cf.\ Corollary \ref{cor:multiplicity} and
\cite{GG:LS, GK, LZ}.

\begin {proof}[Proof of Theorem \ref{thm:mult-refined}]
  The ``Moreover'' part follows immediately from
    \ref{O-clusters} in Theorem \ref{thm:Orbits}.  We begin with some
  general remarks on virtually $\Q$-visible orbits.  Then we turn to
  the multiplicity statement which comprises the main part and the
  bulk of the proof.

  Let $y$ be a closed orbit. Then, in the setting of the theorem,
  \begin{equation}
    \label{eq:virst-vis}
    y^{kN(y)} \textrm{
      is $\Q$-visible for some $k\in \N$ iff } y^{kN(y)} \textrm{ is $\Q$-visible for all $k\in \N$}.
  \end{equation}
  In other words, $y$ is virtually $\Q$-visible if and only if there
  exists $k\in\N$ such that $y^{kN(y)}$ is $\Q$-visible, and if and
  only if $y^{kN(y)}$ is $\Q$-visible for all $k\in\N$.  (In
particular, again within the theorem's framework, for any orbit $z$
which is not virtually $\Q$-visible, no iterate $z^{kN(y)}$,
$k\in \N$, is $\Q$-visible.)

  Indeed, $y^{kN(y)}$ is an admissible iterate of $y^{N(y)}$ and
  the return map of $y^{N(y)}$ has no real negative eigenvalues by the
  definition of $N(y)$. Hence,
  $$
  \chieq\big(y^{kN(y)}\big)=\chieq\big(y^{N(y)}\big)
  $$
  for all $k\in\N$ by Lemma \ref{lemma:chieq-it}. Under the conditions
  of the theorem, $\chieq(x)$ is either $(-1)^{n+1}$ or 0 depending on whether
  $x$ is $\Q$-visible or not, as follows from Corollary
  \ref{cor:SH}. Hence, $y^{N(y)}$ and $y^{kN(y)}$ are both either
  $\Q$-visible or $\Q$-invisible.

  The strategy of the proof of the multiplicity part is to use the
  index recurrence theorem to identify an interval
  $\CI:=[d-n+1, d+n-1]\subset \N$ such that for each eventually
  $\Q$-visible prime orbit $x_j$ only one iterate $x_j^{k_j}$ can have
  degree in $\CI$. Moreover, we ensure that $x_j^{k_j}$ is
  $\Q$-visible if and only if $x_j$ is virtually $\Q$-visible and thus
  there are exactly $n$ such orbits. The main difficulty lies in
  showing that $\deg(x_j^{k<k_j})\not\in\CI$ and this is where we
  crucially rely on Corollary \ref{cor:SH} and Theorem
  \ref{thm:Orbits}.  We prove this first for the least action orbit
  $x$ by analyzing the behavior of the sequence $\deg(x^k)$. Dealing
  with the iterates of the remaining prime orbits $x_j$, we show that
  $x_j^{k}$ must be $\Q$-invisible whenever $\deg(x_j^k)\in \CI$ and
  $k<k_j$, by comparing the degree and the action of $x_j^k$ with the
  degree and the action of the iterates of $x$. It is convenient to
  break down the proof into three steps.

\emph{Step 1: Setup.} As in Theorem \ref{thm:Orbits}, denote by $\PP$
the set of all $\Q$-visible closed orbits. Let $x\in\PP$ be the orbit
with the least action. Then $\deg(x)=n+1$ since the ordering of $\PP$
by the action agrees with the ordering by the degree due to Part
\ref{O-index} of that theorem. As a consequence, $\mu_-(x)\leq
n+1$. We claim that $x$ is prime. Indeed, if $x=z^k$ with $k\geq 2$,
by Lemma \ref{lemma:DC} and dynamical convexity we have
  $$
  \mu_-(x)=\mu_-\big( z^k\big)>n+1,
  $$
  which is impossible.

  \begin{Remark}
    When $M$ is convex, $x$ is, in fact, a least period closed orbit
    on $M$, among all (not necessarily $\Q$-visible) closed orbits. In
    other words, at least one of the least period closed orbits is
    necessarily $\Q$-visible. This is a consequence of the fact that
    for convex hypersurfaces the (non-equivariant) homological
    capacity is equal to the least period, \cite{AbK, Ir}.
  \end{Remark}
  
  Next, let $x_0:=x,\,x_1,\ldots, x_r$ be all prime closed orbits
  which are \emph{eventually $\Q$-visible}, i.e., $x_j^k\in\PP$ for
  some $k\in\N$.  Note that the orbits $x_{j\geq 1}$ might be
  $\Q$-invisible and/or have action below $\CA(x)$. In fact, $x_j^s$
  with $j\geq 1$ is necessarily $\Q$-invisible when
  $\CA\big(x_j^s\big)\leq \CA(x)$. For each $j\geq 1$, we denote by
  $s_j$ the largest such $s$, setting $s_j=0$ if $\CA(x_j)>\CA(x)$.
  Our goal is to show that there are exactly $n$ virtually
  $\Q$-visible orbits among the orbits $x_j$.

  By \ref{O-clusters} and homogeneity of $\CA$ and $\hmu$, the
  orbits $x_j$, $j\geq 0$, form one cluster, i.e., have the same ratio
  $\CA(x_j)/\hmu(x_j)$. Without loss of generality, we may assume that
  this ratio is 1, i.e.,
  \begin{equation}
    \label{eq:action=hmu}
    \CA(y)=\hmu(y)
  \end{equation}
  for all $y\in \PP$.
  
  Consider an index recurrence event as in Corollary \ref{cor:IR},
  where we require $\eta>0$ to be sufficiently small, all $d_{l}$ to
  be even and $k_{jl}$ divisible by $N\in\N$ specified as follows. Let
  $p$ be the least common multiple of the degrees of the roots of
  unity among the elliptic eigenvalues of all closed orbits
  $x_j$. Then
  \begin{equation}
    \label{eq:ThmA-N}
  N=2p\prod_{j=1}^r s_j!
\end{equation}
where, as usual, we set $0!=1$. (Since there is only one cluster we
can suppress $i$ in the notation.) Furthermore, we will also drop $l$
from the notation, concentrating on one event. Thus $d=d_{l}$ and
$k_j=k_{jl}$, etc.

We have three groups (not as algebra objects) of $\Q$-visible closed
orbits $x_j^k$:
  \begin{itemize}
  \item \emph{Group $\Gamma_+$} formed by the $\Q$-visible orbits
    $x_j^{k>k_j}$. Then $\mu_-\big(x_j^k\big)\geq d+n+1$, and hence
    $\deg \big(x_j^k\big)\geq d+n+1$.

  \item \emph{Group $\Gamma_0$} comprising the $\Q$-visible orbits
    $x_j^{k_j}$. Then
  $$
  d-n+1\leq \mu_-\big(x_j^{k_j}\big)\leq \mu_+\big(x_j^{k_j}\big)\leq
  d+n-1,
  $$
  and hence
  $$
  \deg\big(x_j^{k_j}\big)\in \CI:=[d-n+1, d+n-1]\subset \N.
  $$

\item \emph{Group $\Gamma_-$} formed by the $\Q$-visible orbits
  $x_j^{k<k_j}$. Then $\mu_+\big(x_j^k\big)\leq d-2$, and hence
  $\deg \big(x_j^k\big)\leq d-2$. 

\end{itemize}

It is enough to show that the orbits from $\Gamma_-$ cannot have
degree in $\CI$, i.e.,
\begin{equation}
\label{eq:spot}
\deg \big(x_j^k\big)\leq d-n\textrm{ when } k<k_j \textrm{ and
  $x_j^k\in\PP$}.
\end{equation}
Indeed, then the only closed orbits with degree in $\CI$ must be from
$\Gamma_0$. By \eqref{eq:virst-vis}, since $N$ is divisible
by all $N(x_j)$, an orbit $x_j^{k_j}$ is $\Q$-visible if and only if
$x_j$ is virtually $\Q$-visible.  There are exactly $n$ integer spots
in $\CI$ of parity $n+1$, which must all be filled. Different
$\Q$-visible orbits have different degrees by Theorem
\ref{thm:Orbits}. Therefore, there are exactly $n$ virtually
$\Q$-visible prime orbits.

Up to this point our reasoning has followed closely the proof of
multiplicity results in \cite[Sec.\ 6]{GG:LS}. The rest of the
argument is new and different and critically relies on Theorem
\ref{thm:Orbits}, Corollary \ref{cor:SH} and Lemma
\ref{lemma:chieq-it} precisely describing the behavior of the
equivariant Euler characteristic $\chieq$ under admissible iterations.

\emph{Step 2: The orbit $x_0=x$.}  To establish \eqref{eq:spot}, we
start with $j=0$, i.e., $x_0=x$, and $k=k_0-1$. Then $k$ is admissible
and odd due to our choice of $N$. Thus
$$
\chieq(x^k)=\chi(x)=(-1)^{n+1}\neq 0.
$$
Here in the first equality we used Lemma \ref{lemma:chieq-it}. The
second equality follows from the fact that $\COH(x;\Q)$ is
concentrated in degree $n+1$ by Corollary \ref{cor:SH}. As a
consequence, $x^k\in\PP$. We will need the following observation.

\begin{Lemma}
  \label{lemma:SH-it}
Let $x$ and $k$ be as above. 
\begin{itemize}
\item[\reflb{LIE1}{\rm{(D1)}}] Then
  $$
  \deg(x^k)=\deg(x)+ (k-1)\hmu(x)
  $$
  if $x$ is totally degenerate. (In fact, this is true for all
  $k\in\N$.)
  
\item[\reflb{LIE2}{\rm{(D2)}}] When $x$ is not totally degenerate, let
  $\Psi$ be the non-degenerate part of $x$. Then
    $$
  \deg(x^k)=\deg(x)+ \big[\mu\big(\Psi^k\big)-\mu(\Psi)\big].
  $$
  \end{itemize}
\end{Lemma}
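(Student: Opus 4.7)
My plan is to reduce both parts of the lemma to a single invariance statement about an ``offset'' of the totally degenerate part of the linearized return map under iteration, and then prove that offset statement via a product decomposition of local equivariant homology combined with invariance of the Williamson-type invariants under iteration.

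First, I would combine Corollary~\ref{cor:SH} --- which pins $\deg(x)$ as the unique integer with $\COH_{\deg(x)}(x;\Q) \neq 0$ --- with the support bound \eqref{eq:support2} and the decomposition \eqref{eq:bpm} applied to the splitting of the linearized return map of $x$ as $\Psi \oplus \Phi_0$, where $\Psi$ is non-degenerate and $\Phi_0$ totally degenerate. This yields
\[
\deg(x) = \mu(\Psi) + \hmu(\Phi_0) + \delta, \qquad \delta \in \big[-\beta_-(\Phi_0),\, \beta_+(\Phi_0)\big].
\]
Since every $k\in\N$ is admissible for a totally degenerate path, \eqref{eq:beta-it} gives $\beta_\pm(\Phi_0^k) = \beta_\pm(\Phi_0)$, and the same argument applied to $x^k$ (whenever $\Q$-visible) produces
\[
\deg(x^k) = \mu(\Psi^k) + k\hmu(\Phi_0) + \delta_k, \qquad \delta_k \in \big[-\beta_-(\Phi_0),\, \beta_+(\Phi_0)\big].
\]
In (D1), $\Psi$ is trivial so $\mu(\Psi^k)=0$ and the desired formula reduces to $\delta_k=\delta$; in (D2), the defining property of the non-degenerate part forces $\hmu(\Phi_0)=0$, and the formula again reduces to $\delta_k=\delta$. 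Thus both parts amount to proving that the offset $\delta$ is preserved under iteration.

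Second, I would appeal to a Künneth-type product formula for local equivariant symplectic homology corresponding to the splitting $\Phi=\Psi\oplus\Phi_0$: choosing Darboux coordinates near the orbit in which the germ of the defining Hamiltonian splits as a sum, the standard Floer-theoretic product formula yields
\[
\COH(x;\Q) \cong \COH(\Psi;\Q) \otimes \COH(\Phi_0;\Q)
\]
with additive gradings. Since $\Psi$ is non-degenerate, $\COH(\Psi;\Q)=\Q$ is concentrated in degree $\mu(\Psi)$, so the one-dimensionality of $\COH(x;\Q)$ from Corollary~\ref{cor:SH} forces $\COH(\Phi_0;\Q)=\Q$ to be concentrated in degree $\hmu(\Phi_0)+\delta$, and the analogous statement for $x^k$ places $\COH(\Phi_0^k;\Q)=\Q$ in degree $k\hmu(\Phi_0)+\delta_k$.

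The main obstacle is then showing $\delta_k=\delta$ for iterates of a totally degenerate path. I would handle this by perturbing $\Phi_0$ to a nearby non-degenerate path $\Phi_0^\epsilon$ via a small quadratic twist, so that the generators of the Floer complex computing $\COH(\Phi_0;\Q)$ are explicit non-degenerate orbits whose Conley--Zehnder indices are read off from the invariants $b_0, b_\pm, \nu_0$ of $\Phi_0$; these invariants are iteration-invariant by \eqref{eq:beta-it}. The iterated path $(\Phi_0^\epsilon)^k$ is a small non-degenerate perturbation of $\Phi_0^k$ arising from the same deformation, and the indices of its generators differ from those of $\Phi_0^\epsilon$ by precisely the shift $(k-1)\hmu(\Phi_0)$. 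Tracking the degree of the surviving one-dimensional homology class through this shift yields $\delta_k=\delta$, completing both (D1) and (D2); the parenthetical ``for all $k\in\N$'' in (D1) is automatic because when $\Psi$ is trivial there is no non-degenerate factor to impose admissibility constraints.
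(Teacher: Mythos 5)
Your reduction is set up correctly: writing $\deg(x) = \mu(\Psi) + \hmu(\Phi_0) + \delta$ with $\delta$ bounded by $\pm\beta_\pm$, and reducing both parts of the lemma to the iteration invariance $\delta_k = \delta$, is indeed equivalent to the statement. But the argument you give for $\delta_k = \delta$ has a genuine gap. The K\"unneth-type product formula you appeal to does not exist here: although the \emph{linearized} return map decomposes as $\Psi \oplus \Phi_0$, the nonlinear germ of $\varphi$ near the orbit does not split in any choice of Darboux coordinates, since the higher-order terms generically couple the degenerate and non-degenerate blocks. There is therefore no actual product decomposition of the local complex, and no orbit germ whose local homology could be called $\COH(\Phi_0;\Q)$. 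Local symplectic homology is genuinely a function of the nonlinear germ and not of the linear path $\Phi_0$ -- this is precisely what Examples~\ref{exam:iterates} and \ref{exam:iterates2} are illustrating, where linearly identical orbits have different local homology.

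Even granting a splitting, the perturbation argument only tracks the Conley--Zehnder indices of the generators of a Floer complex; it does not determine where the surviving one-dimensional homology class sits, because that depends on the differential and hence again on the nonlinear data. What your step is implicitly trying to reprove is the iteration shift theorem \cite[Thm.\ 1.1]{GG:gap}: for admissible $k$, $\HF(\varphi^k;\Q) = \HF(\varphi;\Q)[-L]$ as \emph{graded vector spaces}, with $L = (k-1)\hmu(x)$ when $x$ is totally degenerate and $L = \mu(\Psi^k) - \mu(\Psi)$ otherwise. That theorem, combined with Fender's identification $\COH(x;\Q) = \HF(\varphi;\Q)$ for prime orbits, the one-dimensionality from Corollary~\ref{cor:SH}, and the inequality \eqref{eq:McL} to locate the support of $\COH(x^k;\Q)$ inside that of $\HF(\varphi^k;\Q)$, is what the paper's proof uses. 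The shift theorem cannot be replaced by a linear perturbation count of generator degrees.
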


\begin{proof} We already know that $\COH(x;\Q)$ and
  $\COH\big(x^k;\Q\big)$ are supported in one degree and
  one-dimensional. Hence we only need to determine the degree shift
  $L$.  Let $\varphi$ be the Poincar\'e return map of $x$. Then the
  return map of $x^l$ is $\varphi^l$ for all $l\in\N$. By
  \eqref{eq:SH-CH} and \eqref{eq:McL},
  $$
  \dim \COH\big(x^l;\Q\big)\leq \dim \HF\big(\varphi^l;\Q\big),
  $$
  again for all $l\in\N$, and 
  $$
  \dim \COH\big(x^k;\Q\big)=1
  $$
  when $k$ is admissible. Moreover, $\COH(x;\Q)=\HF(\varphi;\Q)$ due
to \cite[Thm.\ 1.2]{Fe}; see also \cite{Fe:thesis}. Furthermore, as
readily follows from \cite[Thm.\ 1.1]{GG:gap} (and its proof),
  $$
  \HF\big(\varphi^k;\Q\big)=\HF(\varphi;\Q)[-L]=\COH(x;\Q)[-L]
  $$
  since $k$ is admissible. Here the shift $L$ is exactly as in
  \ref{LIE1} and \ref{LIE2} depending on whether $x$ is totally
  degenerate or not. (Indeed, in the former case
  $\HF\big(\varphi^k;\Q\big)=\HF(\psi\varphi;\Q)$ for a loop $\psi$ of
  local Hamiltonian diffeomorphisms with $\hmu(\psi)=(k-1)\hmu(x)$ by
  \cite[Claim 4.1]{GG:gap}. Hence,
  $\HF\big(\varphi^k;\Q\big)=\HF(\varphi;\Q)[-(k-1)\hmu(x)]$ by
  \cite[(LF6)]{GG:gap}.  The latter case follows from \cite[Sect.\
  4.5]{GG:gap}.) Combining these facts we conclude that
  $\HF\big(\varphi^k;\Q\big)$ is one-dimensional and supported in
  $\deg(x^k)$, and thus $\deg(x^k)=\deg(x)+L$.
  \end{proof}

  \begin{Remark}
    \label{rmk:iterates2}
    Lemma \ref{lemma:SH-it} is a particular case of a much more
    general hypothetical statement. Namely, we conjecture that for any
    isolated closed Reeb orbit $x$,
  $$
  \COH\big(x^k;\Q\big)=\COH(x;\Q)[(1-k)\hmu(x)]
  $$
  if $x$ is totally degenerate.  When $x$ is not totally degenerate,
  let as above $\Psi$ be the non-degenerate part of $x$. Then, at
  least if $x$ is prime,
    $$
    \COH\big(x^k;\Q\big)=
    \COH(x;\Q)\big[\mu(\Psi)-\mu\big(\Psi^k\big)\big]
  $$
  where $k$ is admissible and odd. For non-equivariant symplectic
  homology we expect similar statements to be true for any field. A
  variant of this conjecture is proved in \cite[Thm.s\ 1.10 and
  1.11]{HHM} with a different definition of the local homology, which
  is hypothetically equivalent to the one used here.
  \end{Remark}

  We are now in a position to return to the proof of \eqref{eq:spot}
  for $x_0=x$ and $k=k_0-1$. We will prove the strict inequality
\begin{equation}
\label{eq:spot2}
  \deg \big(x^{k_0-1}\big) < d-n.
\end{equation}
There are two cases to consider depending on whether $x$ is totally
degenerate or not.

Assume first that $x$ is totally degenerate. In this case $x^k\in\PP$
for all $k\in\N$.  Let us define $q$ by the condition
$$
n+1=\deg(x)=\hmu(x)+q.
$$
Note that by dynamical convexity and \eqref{eq:bpm} with $\Psi$ absent
we have
$$
n+1\leq \mu_-(x)= \hmu(x)-\beta_-(x), \textrm{ where }
\beta_-(x)\geq 0.
$$
Thus,
\begin{equation}
  \label{eq:q-and-hmu0}
q\leq 0 \quad \textrm{and} \quad \hmu(x)\geq n+1.
\end{equation}

It follows from \ref{LIE1} of Lemma \ref{lemma:SH-it} that
$$
\deg\big(x^k\big)=k\hmu(x)+q.
$$
(In fact this is true for all $k\in\N$ since all $k$ are admissible in
this case.) Applying this to $k=k_0-1$ and using \eqref{eq:q-and-hmu0}
in the first inequality and then \ref{IRA0} from Theorem
\ref{thm:IR-A} in the penultimate one, we have
\begin{equation*}
  \begin{split}
    \deg\big(x^{k_0-1}\big)&=(k_0-1)\hmu(x)+q\\
    &= \hmu\big(x^{k_0}\big)-\hmu(x)+q\\
    &\leq \hmu\big(x^{k_0}\big)-(n+1)\\
    &\leq d+\eta-(n+1)\\
    &< d-n.
\end{split}
\end{equation*}
In the last inequality, we use the fact that $\eta$ is small and, in
particular, $\eta<1$.  (In fact, in this case
$d=\hmu\big(x^{k_0}\big)$ because $x$ is totally degenerate, and hence
$\hmu\big(x^{k_0}\big)\in\Z$.) Thus $\deg\big(x^{k}\big)< d-n$ proving
\eqref{eq:spot2} when $x$ is totally degenerate.

Next, assume that $x$ has a non-trivial non-degenerate part which we
denote by $\Psi$ as above. The argument is quite similar to the
totally degenerate case with $\hmu(x)$ replaced by
$\mu(\Psi)$. Namely, note that $k:=k_0-1$ is admissible and odd, due
to our divisibility assumptions on $k_0$; see
\eqref{eq:ThmA-N}. Define $q$ by the condition
$$
n+1=\deg(x)=\mu(\Psi)+q.
$$
By dynamical convexity and \eqref{eq:bpm} or more
specifically \eqref{eq:bpm2},
$$
n+1\leq \mu_-(x)=\mu(\Psi)-\beta_-(x) \textrm{ where } \beta_-(x)\geq
0.
$$
It follows that
$$
n+1=\mu(\Psi)+q\leq \mu(\Psi)-\beta_-(x) \textrm{ with }
\beta_-(x)\geq 0,
$$
and hence
\begin{equation}
  \label{eq:q-and-hmu}
q\leq 0 \quad \text{and} \quad \mu(\Psi)\geq n+1.
\end{equation}
Now by \ref{LIE2} with $k=k_0-1$, we have
$$
\deg\big(x^{k}\big)=\mu(\Psi)+q+\mu\big(\Psi^{k}\big)-\mu(\Psi)
=\mu\big(\Psi^{k}\big)+q.
$$
Applying Parts \ref{IRA-} and \ref{IRAsummand} of Theorem
\ref{thm:IR-A} to $\Psi^{k_0-1}=\Psi^k$, we have
$$
\mu\big(\Psi^{k}\big)=d-\mu(\Psi),
$$
Putting these two facts together and using \eqref{eq:q-and-hmu} we
obtain that
$$
\deg\big(x^{k}\big)=d-\mu(\Psi)+q\leq d-n-1.
$$
Therefore, $\deg\big(x^{k}\big)< d-n$, i.e., \eqref{eq:spot2} holds
when the non-degenerate part of $x$ is non-trivial.

When $j=0$, i.e., $x_0=x$, and
$k=k_0-\ell$ with $\ell>1$, condition \eqref{eq:spot} is an easy
consequence of \eqref{eq:spot2}. Indeed, then
$$
\CA\big(x^k\big)<\CA\big(x^{k_0-1}\big).
$$
When in addition $x^k\in\PP$, i.e., $x^k$ is $\Q$-visible, we must
have
$$
\deg\big(x^k\big)<\deg\big(x^{k_0-1}\big)<d-n
$$
by Part \ref{O-index} of Theorem \ref{thm:Orbits} and
\eqref{eq:spot2}. This proves \eqref{eq:spot} for $j=0$.

\emph{Step 3: The orbits $x_{j\geq 1}$.}  Next, let us focus on
\eqref{eq:spot} for $j\geq 1$. Fixing $j$, assume that \eqref{eq:spot}
fails:
\[
  x_j^{k_j-s}\in \PP\textrm{ and }
  \deg\big(x_j^{k_j-s}\big)>d-n\textrm{ for some } s>0.
\]
Then we also have
$$
\deg\big(x_j^{k_j-s}\big) > \deg\big(x^{k_0-1}\big),
$$
and hence
\begin{equation}
  \label{eq:action-xj-x0}
\CA\big(x_j^{k_j-s}\big) > \CA\big(x^{k_0-1}\big)
\end{equation}
as a consequence again of \ref{O-index}.

By \eqref{eq:action=hmu}, the action is equal to the mean index. By
Part \ref{IRA0} of Theorem \ref{thm:IR-A},
$$
\big|\CA\big(x_j^{k_j}\big) - \CA\big(x^{k_0}\big)\big|<2\eta.
$$
Therefore, provided that $\eta>0$ is sufficiently small,
\eqref{eq:action-xj-x0} is equivalent to the condition that
\begin{equation}
  \label{eq:action-xj-x0-2}
\CA\big(x_j^{s}\big) < \CA(x).
\end{equation}
To summarize, to complete the proof we need to show that
$x_j^{k_j-s}\not\in\PP$, i.e., $x_j^{k_j-s}$ is $\Q$-invisible,
whenever \eqref{eq:action-xj-x0-2} is satisfied.

Note that then $s\leq s_j$ and hence $k_j$ is divisible by $s$ due to
the divisibility requirement, \eqref{eq:ThmA-N}. Furthermore, $x_j^s$
is $\Q$-invisible and $\chieq\big(x_j^s\big)=0$. Set $k'_j=k_j/s$. We
have
$$
x_j^{k_j-s}=\big(x_j^s\big)^{k'_j-1}.
$$
This guarantees, again by \eqref{eq:ThmA-N}, that $x_j^{k_j-s}$ is an
admissible odd iterate of $x_j^s$. Finally, by Lemma
\ref{lemma:chieq-it},
$$
\chieq\big(x_j^{k_j-s}\big)=\chieq\big(x_j^{s}\big)=0,
$$
and $x_j^{k_j-s}$ is $\Q$-invisible due to Corollary \ref{cor:SH}.

This completes the proof of \eqref{eq:spot} and Theorem
\ref{thm:mult-refined}.
\end{proof}

\subsubsection{Proof of Theorem \ref{thm:HZ}}
\label{sec:HZ-pf}
The idea of the proof is that symmetry and Corollary \ref{cor:SH} rule
out the existence of asymmetric orbits of the flow. Furthermore, prime
symmetric orbits are necessarily non-alternating and the result
follows from Part \ref{Mult-nd}.

Here is a detailed argument. Recall that when $M$ is centrally
symmetric, there are two types of closed Reeb orbits in $M$: symmetric
and asymmetric. The former are the closed orbits $x$ such that
$-x(t)=x(t+T/2)$ where $T=\CA(x)$ is the period of $x$, and the latter
are the orbits $x$ such that $-x$ and $x$ are geometrically
distinct. Clearly, asymmetric orbits come in pairs, $(x,-x)$, with
both $x$ and $-x$ having the same action and the same index. Thus, by
Corollary \ref{cor:SH}, the flow has no asymmetric orbits.

To finish the proof it suffices to show that all symmetric closed
orbits are non-alternating. This is a standard observation going back
to \cite{LLZ}; below we closely follow \cite{GM}.

Consider the homogeneous of degree two Hamiltonian $H$ on $\R^{2n}$
with $\{ H=1\}=M$. The flow of $H$ on $M$ agrees with the Reeb flow on
$M$. Let $\Psi^t$ be the linearized flow of $H$ in $\R^{2n}$ along a
closed symmetric orbit $x$ in $M$ with period $T$. Using the constant
trivialization $T\R^{2n}\cong \R^{2n}\times \R^{2n}$ we can view
$\Psi^t$ as a path in $\Sp(2n)$. Then $\Psi^T=(\Psi^{T/2})^2$, since
the trivialization and $H$ are both centrally symmetric, and
therefore, $\Psi^T$ is non-alternating. To be more precise, $\Psi^T$
has an even number of eigenvalues in the range $(-1,0)$ and $-1$ is
not an eigenvalue since the Reeb flow is non-degenerate. Fix a point
$z$ on $x$ and consider the decomposition
$T_z\R^{2n}=\xi_z\oplus \R^2$, where $\xi_z\subset T_zM$ is the
contact plane and the second factor is the span of the Reeb vector
field vector at $z$ and the radial direction. The decomposition is
invariant under $\Psi^T\colon T_z\R^{2n}\to T_z\R^{2n}$, which then
decomposes as $\Phi\oplus \id$, where $\Phi\colon \xi_z\to\xi_z$ is
the linearized Reeb flow in time $T$. It follows that $\Phi$ and
$\Psi^T$ have the same eigenvalues other than 1. Therefore, $\Phi$ and
hence, by definition, $x$ are also non-alternating.

This completes the proof of the theorem. \hfill\qed

\begin{Remark}[Role of symmetry and non-degeneracy]
  It is illuminating to examine the role of symmetry, non-degeneracy
  and our other main results in the proof of Theorem \ref{thm:HZ}. The
  non-degeneracy condition is used to have non-alternating closed
  orbits defined and then, in some sense even more crucially, to
  guarantee that all iterates except the bad ones are $\Q$-visible;
  cf.\ Examples \ref{exam:iterates} and \ref{exam:iterates2}. The
  lower bound $n$ on the number of non-alternating prime orbits
  follows in essence from \cite{GK, LZ}; cf.\ Corollary
  \ref{cor:multiplicity}. The genuinely new part of Theorem
  \ref{thm:mult} is not needed here.  However, we would have the same
  lower bound on the total number of prime closed orbits even without
  non-degeneracy by \ref{Mult}. Likewise, if we replaced dynamical
  convexity by convexity or strong dynamical convexity in Theorem
  \ref{thm:HZ}, which are admittedly more restrictive conditions, by
  \cite{LLZ} or \cite{GM}; cf.\ Corollary \ref{cor:multiplicity-sym}.

  The symmetry condition together with the fact that distinct orbits
  have distinct actions (Corollary \ref{cor:SH}) is central to the
  proof of Theorem \ref{thm:HZ} to rule out the existence of
  asymmetric closed orbits. This step is independent of the
  non-degeneracy requirement.
\end{Remark} 

\subsection{Proof of Theorem \ref{thm:Orbits}}
\label{sec:pf-main2} 
The proof relies on Theorem \ref{thm:SH} and Corollary \ref{cor:SH},
which are the central parts of the argument. Recall that $\PP$ is the
set of $\Q$-visible closed orbits and $\PP'$ stands for $\PP$ with the
extra element $[W]$ of degree $n$ and action $0$. We start with the
proof of Part \ref{O-index}, noting first that the existence of
$x\in \PP$ with $\deg(x)=m$ for any $m\in n-1+2\N$ is an immediate
consequence of \eqref{eq:CH} and \eqref{eq:spec-CH}.

For every $x\in \PP$, there exists exactly one bar $I_-$ ending at $x$
and one bar $I_+$ beginning at $x$ by Lemma \ref{lemma:bars-SH} and
Theorem \ref{thm:SH}. Furthermore, again due to Lemma
\ref{lemma:bars-SH}, there are two possibilities:
  \begin{equation}
  \label{eq:P1}
  \deg(I_-)=\deg(x)-1\textrm{ and then } \deg(I_+)=\deg (x) + 1
  \end{equation}
  or
  \begin{equation}
  \label{eq:P2}
  \deg(I_-)=\deg(x) \textrm{ and then } \deg(I_+)=\deg (x).
  \end{equation}

  Next, denote by $x_1, x_2, \ldots$ the elements of $\PP$ arranged in
  the order of (strictly) increasing action. We also set $x_0$ to be
  the extra element $[W]$ of $\PP'$. By definition, this element has
  action 0 and degree $n$. Let $I_i$ be the bar beginning at $x_i$.

  It immediately follows from \eqref{eq:P1} and \eqref{eq:P2} that the
  sequence $\deg(I_i)$ has constant parity. Since $\deg(I_0)=n$, all
  $\deg(I_i)$ have parity $n$.

  Again by \eqref{eq:P1} and \eqref{eq:P2}, the sequences $\deg(x_i)$
  and $\deg(I_i)$ are (not necessarily strictly) increasing. Moreover,
  \eqref{eq:P2} cannot happen when $\deg(x_i)$ has parity $n+1$; for
  then $\deg(I_i)$ would have parity $n+1$. Hence, $\deg(x_i)$ is
  strictly increasing at the points of $n-1+2\N$. It follows that for
  every $m$ in this set there is exactly one $x\in \PP$ with
  $\deg(x)=m$.

  Furthermore, if we had $y\in \PP$ with $q:=\deg(y)\in n+2\N$ we
  would have at least two orbits in $\PP$ of degree $q-1$ or $q+1$ due
  to \eqref{eq:spec-CH} or, more specifically, \eqref{eq:odd} and
  Theorem \ref{thm:SH}. This is impossible since, as we have shown,
  for every $m\in n-1+2\N$ an orbit $x\in \PP$ with $\deg(x)=m$ is
  unique. This completes the proof of the statement that the degree
  map $\deg$ on $\PP$ takes values in $n-1+2\N$ and is a bijection.

  Together with the fact that all $\deg(I_i)$ have parity $n$, this
  also completely rules out \eqref{eq:P2}. Therefore, the degree map
  $\deg$ on $\CB$ takes values in $n-2+2\N$ and is also a bijection.
  
  The statement that the ordering of the orbits in $\PP$ (or the bars
  in $\CB$) by the action agrees with the ordering by the index is
  simply a reformulation of the fact that the sequence $\deg(x_i)$ is
  strictly increasing. (Recall that the sequence $x_i$ is originally
  ordered by the action.) Alternatively, this statement is also a
  consequence of \cite[Thm.\ 1.3]{GG:LS}. This concludes the proof of
  Part \ref{O-index}.

  Turning to Part \ref{O-clusters}, assume that $x\in\PP$. Then
  $\chieq(x)=(-1)^{\deg(x)}\neq 0$ by \eqref{eq:chieq}. Then by Lemma
  \ref{lemma:chieq-it} and, in particular, \eqref{eq:chieq-it},
  $x^k\in \PP$ for all odd admissible $k\in\N$. Thus, using the
  terminology from \cite{GG:LS}, we see that infinitely many iterates
  $x^k$ occur as carriers of equivariant action selectors (aka
  equivariant capacities). Therefore, all $\Q$-visible closed orbits
  have the same ratio $\CA(x)/\hmu(x)$ by \cite[Thm.\
  6.4]{GG:LS}. \hfill\qed
 
\begin{Remark}
  When the flow is non-degenerate, Part \ref{O-index} has an analogue
  for any field $\F$ with $\deg(x)$ replaced by $\mu(x)$, although the
  statement is then less precise. Namely, the sequences $\mu(x_i)$ and
  $\deg(I_{i})$ are still increasing although now not necessarily
  strictly. However, for every $m\in n-1+2\N$ we still have exactly
  one $\F$-visible closed orbit $x$ with $\mu(x)=m$ and all
  $\deg(I_i)$ have parity $n$. The key difference is that in this case
  we cannot rule out the existence of closed $\F$-visible orbits $x$
  with $\mu(x)$ of parity $n$ and intervals in the sequences
  $\deg(x_i)$ and $\deg(I_i)$ where these sequences are constant. The
  proof is a subset of the proof of Part \ref{O-index} combined with
  the fact that $\supp\SH(x;\F)=\{\mu(x),\mu(x)+1\}$ for any field
  $\F$ when $x$ is non-degenerate, i.e., the second part of Corollary
  \ref{cor:SH} holds automatically.  We do not know to what degree
  Part \ref{O-clusters} extends to this case.
\end{Remark}

\subsection{Proofs of Theorem \ref{thm:SH} and Corollary \ref{cor:SH}}
\label{sec:pf-main3}
The strategy of the proof of the theorem is as follows. Due to the
universal coefficient theorem, it suffices to prove the theorem for
$\F=\F_p$. By the Smith inequality (see Theorem \ref{thm:Smith} or
\cite{Se, ShZ}), $D_t:=\dim\SH^t(W;\F_p)\geq 1$ for all
$t>0$. Moreover, for every $t\in\R\setminus\CS(\alpha)$, there are
arbitrarily long intervals $I\subset\R$ such that $D_\tau\geq D_t$ for
all $\tau\in I$. Next, we infer from the index recurrence theorem
(Theorem \ref{thm:IR-A}) and an upper bound on the boundary depth (see
Theorem \ref{thm:vanishing} or \cite{GS}) that there exists $C\in I$
such that $\SH^C(W;\F_p)$ is supported only in degrees of parity
$n$. This is the central point of the proof, and this is where
dynamical convexity and index recurrence in the form of Theorem
\ref{thm:IR-A} become crucial. Then
$$
D_C=(-1)^n\chi(W)=1
$$
by \eqref{eq:Euler}. Hence, $D_t=1$ for all $t>0$.

\subsubsection{Generalities} We start by stating several facts used in
the proof of the theorem. In this section, we will also reduce the
theorem to an auxiliary result (Proposition \ref{prop:key_prop}) which
is then proved in Section \ref{sec:pf-key_prop}.

Recall that by Theorem \ref{thm:Smith} (the Smith inequality)
proved in \cite{Se, ShZ}, for every Liouville domain $W$, a
characteristic $p$ field $\F_p$ and an interval $I$,
\begin{equation}
  \label{eq:Smith2}
\dim\SH^{pI}(W;\F_p)\geq\dim \SH^I(W;\F_p).
\end{equation}
(The reader may simply set here $\F_p$ to be the standard field
$\Z/p\Z$ of characteristic $p$.) In particular,
$\dim \SH^{pt}(W;\F_p)\geq\dim \SH^t(W;\F_p)$ for all $t>0$.
Therefore, for all $t>0$ and any field $\F$, we have
\begin{equation}
  \label{eq:Smith1}
\dim \SH^t(W;\F)\geq\dim\H(W;\F).
\end{equation}
When $W$ is star-shaped, this also follows from \eqref{eq:Euler}
  for all fields $\F$. However, the Smith inequality is needed here
  for more general Liouville domains.

As a consequence, $\dim \SH^t(W;\F)\geq 1$. Indeed, to prove
\eqref{eq:Smith1}, we first note that for all $k\in \N$ and $t>0$,
$$
\dim \SH^t(W;\F_p)\geq \dim \SH^{t/p^k}(W;\F_p)
$$
due to \eqref{eq:Smith01}.
On the other hand, when $k$ is large, by
\eqref{eq:SH-small-delta} we have
$$
\SH^{t/p^k}(W;\F_p)=\H(W;\F_p)[-n].
$$
Thus, \eqref{eq:Smith1} holds for every positive characteristic
field. Then the case of zero characteristic follows from the universal
coefficient theorem and, in particular, \eqref{eq:universal}.

With the Smith inequality, \eqref{eq:Smith1}, in mind, the proof of
the theorem is based on the following key result giving the opposite
bound for a certain sequence of action values.

\begin{Proposition}
\label{prop:key_prop}
In the setting of Theorem \ref{thm:SH}, for every field $\F$ there
exists a positive real sequence $C_l\to \infty$ with bounded gap such
that
\begin{equation*}
\label{eq:key_prop}
\dim \SH^{C_l} (W; \F) = 1
\end{equation*}
for all $C_l$.
\end{Proposition}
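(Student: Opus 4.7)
The plan is to apply the index recurrence theorem (Theorem \ref{thm:IR-A}) to produce the desired bounded-gap sequence $C_l \to \infty$, and then to show that, for $l$ sufficiently large, every bar of the persistence module $\SH(W;\F)$ straddling $C_l$ has degree $d_{il}+n$ for some cluster index $i$, of parity $n$. Since $W$ is contractible, $\chi(W)=1$, so the Euler characteristic identity \eqref{eq:Euler} will then force $\dim\SH^{C_l}(W;\F)=1$.

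Concretely, label the prime closed orbits as $\{x_{ij}\}$ grouped into clusters by the ratio $\rho_i := a_{ij}/\hmu(\Phi_{ij})$, with $a_{ij}=\CA(x_{ij})$ and $\Phi_{ij}$ the linearized return map. Set $a_{\min}:=\min_{i,j}a_{ij}$, and let $\Cbar$ denote the boundary depth bound from Theorem \ref{thm:vanishing}, finite because $\SH^\infty(W;\F)=0$ for $W\subset\R^{2n}$ star-shaped. Apply Theorem \ref{thm:IR-A} with $0<\eta<a_{\min}$, $\ell_0>(\Cbar+\eta)/a_{\min}$, all $d_{il}$ divisible by $2$, and $C_l\notin\CS(\alpha)$. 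The resulting sequence $C_l\to\infty$ has bounded gap.

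Fix $C_l$ and consider a bar $I=(a,b]$ with $a<C_l<b$. Theorem \ref{thm:vanishing} gives $b-a\leq\Cbar$, so $a,b\in[C_l-\Cbar,C_l+\Cbar]$; by our choice of $\ell_0$ and $\eta$, every orbit with action in this window is an iterate $x_{ij}^k$ with $|k-k_{ijl}|\leq\ell_0$. Theorem \ref{thm:beg-end} attaches to $I$ a beginning orbit $x=\beg(I)$ with $\deg(I)\in\supp\SH(x;\F)$ and an end orbit $y=\en(I)$ with $\deg(I)+1\in\supp\SH(y;\F)$. Since $\CA(y)>C_l$, we have $y=x_{i'j'}^{k'}$ with $k'>k_{i'j'l}$, so \ref{IRA+DC} forces $\mu_-(y)\geq d_{i'l}+n+1$ and hence $\deg(I)\geq d_{i'l}+n$. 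Writing $x=x_{ij}^k$ with $\CA(x)<C_l$: if $k=k_{ijl}$ then \ref{IRA0} yields $\deg(I)\leq\mu_+(x)+1\leq d_{il}+n$, while if $k<k_{ijl}$ then \ref{IRA-DC} yields $\deg(I)\leq d_{il}-1$. In the intra-cluster case $i=i'$, the latter is incompatible with $\deg(I)\geq d_{il}+n$ (using $n\geq 2$), so $k=k_{ijl}$ and $\deg(I)=d_{il}+n$.

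The main obstacle is ruling out cross-cluster bars ($i\neq i'$). Since $|k-k_{ijl}|,|k'-k_{i'j'l}|\leq\ell_0$, a direct application of \eqref{eq:mu-del} combined with \ref{IRA0} gives $\mu_-(x)\geq d_{il}-K$ and $\mu_+(y)\leq d_{i'l}+K$ for a constant $K$ depending on $\ell_0$ and the finitely many $\hmu(\Phi_{ij})$ but not on $l$. Since $\deg(I)\in[\mu_-(x),\mu_+(y)]$, we obtain $d_{il}-d_{i'l}\leq 2K$, while $\deg(I)\geq d_{i'l}+n$ combined with $\deg(I)\leq d_{il}+n$ (if $k=k_{ijl}$) or $\deg(I)\leq d_{il}-1$ (if $k<k_{ijl}$) forces $d_{il}\geq d_{i'l}$. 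However $d_{il}-d_{i'l}=C_l(1/\rho_i-1/\rho_{i'})+O(1)$ diverges to $\pm\infty$ whenever $\rho_i\neq\rho_{i'}$, contradicting $0\leq d_{il}-d_{i'l}\leq 2K$ for all sufficiently large $l$. Since there are finitely many cluster pairs, there exists $L_0$ with no cross-cluster bars for $l\geq L_0$. Passing to the bounded-gap tail $\{C_l\}_{l\geq L_0}$, every straddling bar has degree $d_{il}+n$ of parity $n$ (as $d_{il}$ is even), so $\SH^{C_l}(W;\F)$ is concentrated in degrees of parity $n$, and \eqref{eq:Euler} with $\chi(W)=1$ gives $\dim\SH^{C_l}(W;\F)=1$.
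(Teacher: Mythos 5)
Your proof is correct and follows the same overall strategy as the paper: apply Theorem \ref{thm:IR-A} and Corollary \ref{cor:IR-A-DC} to get the sequence $C_l$, use Theorem \ref{thm:vanishing} to bound bar lengths, use Theorem \ref{thm:beg-end} to attach orbits to each straddling bar, pin down the degree as $d_{il}+n$ via the index recurrence inequalities, rule out cross-cluster bars, and finish with the Euler characteristic identity \eqref{eq:Euler}. The one place where the two arguments diverge is the cross-cluster exclusion. The paper isolates this as Lemma \ref{lemma:inter-cluster}, a clean statement that \emph{every} bar $(a,b]$ with $a>\Kbar$ must start and end in the same cluster; its proof is a direct estimate $b-a>|\rho'/\rho-1|\cdot a-2n\rho$ against the boundary-depth bound $b-a<\Cbar$, and it does not involve the recurrence sequences at all. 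You instead rule out cross-cluster bars only among those straddling $C_l$ and only for $l$ large, by showing that such a bar would force $|d_{il}-d_{i'l}|\le 2K$ while $d_{il}-d_{i'l}=C_l(1/\rho_i-1/\rho_{i'})+O(1)$ diverges. The underlying phenomenon is the same (the mean index grows at different rates in different clusters), but the paper's lemma is $l$-independent and more portable, whereas your argument is self-contained within the application and leans more on the recurrence machinery. One minor point worth making explicit: you need $\ell_0$ to satisfy both your constraint $\ell_0>(\Cbar+\eta)/a_{\min}$ and the hypothesis of Corollary \ref{cor:IR-A-DC} (e.g.\ $2\ell_0\ge 3(m+1)$) before invoking \ref{IRA+DC} and \ref{IRA-DC}; taking the maximum of the two lower bounds fixes this.
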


This proposition is proved in Section \ref{sec:pf-key_prop}. Here we
only point out that this is the only part of the proof of Theorem
\ref{thm:SH} where the finiteness and dynamical convexity conditions
are used. The rest of the argument is independent of these
requirements. Note also that the sequence $C_l$ can be taken
independent of the field $\F$. However, we do not need this fact.

\begin{proof}[Proof of Theorem \ref{thm:SH}]
  As above, by the universal coefficient theorem and, in particular
  \eqref{eq:universal}, it is enough to prove \eqref{eq:dim-key} for
  all positive characteristic fields $\F_p$.  Setting
  $$D_t:=\dim\SH^t(W;\F_p),
  $$
  our goal is to show that $D_t =1$ for all $t>0$. By
  \eqref{eq:Smith1}, we have $D_t\geq 1$, and hence we only need to
  prove the opposite inequality.  Clearly, it suffices to show this
  for $t>0$ outside the action (period) spectrum $\CS(\alpha)$ of the
  contact form $\alpha$.

  Thus, fix $t \in \R_+\setminus\CS(\alpha)$ and let $t'>t$ be such
  that $[t,t'] \cap\CS(\alpha) = \emptyset$.  Since
  $ [t,t'] \cap\CS(\alpha) = \emptyset$, we have $D_t=D_\tau$ for all
  $\tau \in [t,t']$. Then, by the Smith inequality, \eqref{eq:Smith2},
  again, applied $k$ times to $I=[0,\tau]$, we have
$$
D_\tau\leq D_{p^k\tau}
$$
for all $k\in \N$. Here $p^k\tau$ ranges through the interval
$I_k=[p^kt, p^kt']$ as $\tau$ ranges through $[t,t']$. Now let
$C_l\to \infty$ be as in Proposition \ref{prop:key_prop}. Since the
sequence $C_l$ has bounded gap, for all sufficiently large $k$, the
intersection of the sequence $\{C_l\}$ and $I_k$ is non-empty:
$\{ C_l \} \cap [p^kt, p^kt'] \neq \emptyset$. Therefore, there exists
$\tau\in [t,t']$ and $k\in\N$ and $l\in\N$ such that $p^k\tau=C_l$.
By Proposition \ref{prop:key_prop}, $D_{C_l} \leq 1$. Combining these
facts, we arrive at
$$
D_t=D_\tau\leq D_{p^k\tau=C_l}=1.
$$
Hence, $D_t\leq 1$, which concludes the proof of the theorem.
\end{proof}

\subsubsection{Proof of Proposition \ref{prop:key_prop}}
\label{sec:pf-key_prop}
Fix a field $\F$, which we suppress in the notation. As in Section
\ref{sec:IR-Thm}, let us break down the collection $\PP=\PP(\F)$ of
$\F$-visible closed orbits into several groups, called
\emph{clusters}. We say that two such orbits $x$ and $y$ belong to the
same cluster if $\CA(x)/\hmu(x)=\CA(y)/\hmu(y)$. (Thus, for instance,
\ref{O-clusters} asserts that there is only one cluster.)

Consider now the maps $\beg\colon \CB\to\PP'$ and $\en\colon\CB\to\PP$
from Theorem \ref{thm:beg-end}, where $\CB$ is the collection of bars
of the graded persistence module $\SH(W)$.  Let $I\in \CB$, and
$x=\beg(I)$ and $y=\en(I)$. Then

 \begin{itemize}

\item[\reflb{bar-actions}{(B1)}] $a=\CA(x)$ and $b=\CA(y)$;

\item[\reflb{bar-indices}{(B2)}] $\mu_-(y) -
  \mu_+(x)\leq 2$; 

\item[\reflb{bar-degrees}{(B3)}] 
  $\deg(I)=\mu_+(x)+1=\mu_-(y)-1$ if $\mu_-(y) - \mu_+(x) = 2$.

\end{itemize}
Here \ref{bar-actions} is the first item in Theorem \ref{thm:beg-end},
\ref{bar-indices} is \eqref{eq:mu(x)-mu(y)1}, and \ref{bar-degrees} is
\eqref{eq:mu(x)-mu(y)2}.

Our first observation is that beyond some action threshold there are
no inter-cluster bars.

\begin{Lemma}
  \label{lemma:inter-cluster}
  There exists a constant $\Kbar>0$ such that for every bar $(a, b]$
  with $a>\Kbar$ beginning at an orbit $x$ and ending at $y$, the
  orbits $x$ and $y$ are in the same cluster.
\end{Lemma}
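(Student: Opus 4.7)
The plan is a three-step argument combining Theorem~\ref{thm:vanishing} (boundary depth bound), Theorem~\ref{thm:beg-end} (index constraints at the endpoints of a bar), and dynamical convexity together with the finiteness of prime orbits.

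First, since $W$ is a star-shaped domain in $\R^{2n}$ we have $\SH^\infty(W;\F)=0$, so Theorem~\ref{thm:vanishing} provides a uniform length bound $\Cbar$ on every bar; any bar $(a,b]\in\CB$ therefore satisfies $b-a\le\Cbar$. Second, I will extract from Theorem~\ref{thm:beg-end} the clean two-sided inequality
\begin{equation*}
|\hmu(x)-\hmu(y)|\le 2n.
\end{equation*}
Indeed, the conditions $\deg(I)\in\supp\SH(x)$ and $\deg(I)+1\in\supp\SH(y)$ combined with \eqref{eq:support1} place $\deg(I)$ simultaneously in $[\hmu(x)-n+1,\hmu(x)+n]$ and in $[\hmu(y)-n,\hmu(y)+n-1]$, and the nonempty intersection of these intervals forces the displayed inequality. (Only one of the two needed directions, $\mu_-(y)-\mu_+(x)\le 2$, is recorded in the text after Theorem~\ref{thm:beg-end}; the other, $\mu_-(x)\le\mu_+(y)$, follows from the same support conditions.)

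Third, by the pseudo-rotation hypothesis there are finitely many prime Reeb orbits $z_1,\dots,z_r$, every $\F$-visible orbit is an iterate $z_i^{k}$, and its cluster is determined by the ratio $\rho_i:=\CA(z_i)/\hmu(z_i)$. Dynamical convexity together with Lemma~\ref{lemma:DC} yields $\hmu(z_i)\ge 2$, so each $\rho_i$ is a positive real number; consequently the finite set $\{1/\rho_1,\ldots,1/\rho_r\}$ has a minimal positive gap $\delta>0$ between distinct elements. Writing $x=z_i^{k_i}$ and $y=z_j^{k_j}$ with cluster ratios $\rho,\rho'$, we have $\hmu(x)=a/\rho$ and $\hmu(y)=b/\rho'$. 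If $\rho\ne\rho'$, then $|1/\rho-1/\rho'|\ge\delta$ and the constraints $0<a\le b\le a+\Cbar$ give, by a triangle inequality,
\begin{equation*}
|a/\rho-b/\rho'|\ge a\delta-\Cbar/\rho_{\min},
\qquad \rho_{\min}:=\min_i\rho_i.
\end{equation*}
Taking $\Kbar:=(2n+\Cbar/\rho_{\min})/\delta$, any $a>\Kbar$ makes the right-hand side strictly exceed $2n$, contradicting Step~2. Hence $\rho=\rho'$, i.e., $x$ and $y$ lie in the same cluster. (Note that the condition $a>\Kbar>0$ automatically forces $x=\beg(I)\in\PP$ rather than $x=[W]\in\PP'\setminus\PP$, since $[W]$ has action zero.)

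The only subtle point is the extraction in Step~2 of the two-sided mean-index bound from the support information in Theorem~\ref{thm:beg-end}; once this is in hand, the rest is a direct linear estimate driven by the finiteness of the set of cluster ratios and the boundary depth $\Cbar$, with no further Floer-theoretic input.
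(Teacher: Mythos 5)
Your proposal is correct and takes essentially the same route as the paper's proof: uniform boundary depth from Theorem~\ref{thm:vanishing}, a two-sided mean-index bound $|\hmu(x)-\hmu(y)|\le 2n$ extracted from Theorem~\ref{thm:beg-end} via \eqref{eq:support1} and \eqref{eq:mu-del}, and then a linear estimate exploiting the finiteness of the cluster ratios. Your Step~2 is in fact a little more careful than the paper's, which invokes only \ref{bar-indices} (the one-sided inequality $\mu_-(y)-\mu_+(x)\le 2$) when, as you correctly point out, the companion inequality $\mu_-(x)\le\mu_+(y)$ is also needed and follows from the same support conditions in Theorem~\ref{thm:beg-end}.
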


\begin{Remark}
  The constant $\Kbar$ can be taken independent of the background
  field $\F$, but we do not need this fact.
\end{Remark}

\begin{proof}
  Recall that by Theorem \ref{thm:vanishing}, for any field $\F$ all
  bars in $\SH(W)$ are finite and bounded from above by a constant
  $\Cbar$. (This constant can also be taken independent of $\F$, but
  this is not essential for our purposes.) In other words, there
  exists $\Cbar >0$ such that any bar $(a,b]$ has length
  $b-a < \Cbar$.

  Observe that by \ref{bar-indices} the difference
  $\big|\hmu(y) - \hmu(x)\big|$ is a priori bounded from above by
  $2n$. Here we use the fact that $|\mu_\pm - \hmu | \leq n-1$ by
  \eqref{eq:mu-del}. Set $\rho=\CA(x)/\hmu(x)$ and
  $\rho'=\CA(y)/\hmu(y)$ and recall that $b=\CA(y)$ and
  $a=\CA(x)$. Then a direct calculation shows that 
  $$
  b-a\geq |\rho/\rho'-1|\cdot b-2n\rho.
  $$
  We also have $b-a < \Cbar$. Therefore, since there are only finitely
  many clusters, we must have $\rho=\rho'$ when $a>\Kbar$ for some
  constant $\Kbar$.
\end{proof}

Next, let us label the prime orbits as $x_{ij}$, where
$i=1,\ldots, i_0$ and $j=1,\ldots, j_0 (i)$, with the index $i$
indicating the cluster. In other words, $x_{ij}^k$ and $x_{i'j'}^{k'}$
are in the same cluster if and only if $i=i'$. (Note that $x_{ij}^k$
may fail to be $\F$-visible, i.e., $x_{ij}^k$ is not necessarily in
$\PP$, for some $k\in\N$.)

By Theorem \ref{thm:IR-A} and Corollary \ref{cor:IR-A-DC}, there
exists a positive real sequence $C_l \to \infty$ and for each
$i=1,\ldots, i_0$ there is a sequence of positive even integers
$d_{il} \to \infty$ as $l\to \infty$ such that for every iterated
orbit $x_{ij}^k$ we have
\begin{equation}
  \label{eq:IR-A1}
  \mu_+\big(x_{ij}^k\big) \leq d_{il} + n-1 \textrm{ when }
  \CA\big(x_{ij}^k\big) < C_l
\end{equation}
and 
\begin{equation}
  \label{eq:IR-A2}
  \mu_-\big(x_{ij}^k\big) \geq d_{il} + n+1 \textrm{ when }
  \CA\big(x_{ij}^k\big) > C_l.
\end{equation}
Indeed, in the setting of Theorem \ref{thm:IR-A} and Corollary
\ref{cor:IR-A-DC} for a recurrence event $k_{ijl}$ we have
$k\leq k_{ijl}$ in \eqref{eq:IR-A1} and $k>k_{ijl}$ in
\eqref{eq:IR-A2}. Here both action inequalities are strict, since the
sequence $C_l$ can be chosen outside the action spectrum
$\CS(\alpha)$.

The next lemma asserts that for sufficiently large $C_l$, the graded
vector space $\SH^{C_l} (W)$ is supported in degrees
$\{d_{il}+n\mid i=1,\ldots, i_0\}$.

\begin{Lemma}
\label{lemma:IRT-support}
Let $C_l \to \infty$ and $d_{il} \to \infty$ be as above, i.e., these
sequences satisfy \eqref{eq:IR-A1} and \eqref{eq:IR-A2} and all
$d_{il}$ are even. Then there exists a constant $K>0$ such that once
$C_l > K$ we have
\begin{equation}
  \label{eq:IRT-support}
\dim\SH^{C_l} (W) = \sum_i \dim \SH_{d_{il}+n} ^{C_l} (W).
\end{equation}
In particular, $\SH^{C_l} (W)$ is supported in degrees of the same
parity as $n$.
\end{Lemma}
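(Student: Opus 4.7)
The plan is to count bars contributing to $\SH^{C_l}(W)$ via the beginning and end maps of Theorem \ref{thm:beg-end}, and to pin down the degree of each such bar using the index recurrence bounds \eqref{eq:IR-A1} and \eqref{eq:IR-A2}. Choose $C_l \notin \CS(\alpha)$; then $\dim \SH^{C_l}_m(W)$ equals the number of bars $I = (a,b] \in \CB$ of degree $m$ that are alive at $C_l$, i.e., with $a < C_l < b$ (strict on the right because $b \in \CS(\alpha) \not\ni C_l$). Let $x = \beg(I) \in \PP'$ and $y = \en(I) \in \PP$, with actions $\CA(x) = a$ and $\CA(y) = b$.

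First I would use Theorem \ref{thm:vanishing} to fix a global boundary depth bound $\Cbar$, so every bar satisfies $b - a < \Cbar$ and hence $a > C_l - \Cbar$. Set $K := \Kbar + \Cbar$, where $\Kbar$ is the constant from Lemma \ref{lemma:inter-cluster}. Once $C_l > K$, we have $a > \Kbar > 0$, so in particular $\beg(I) \neq [W]$ (ruling out the bar emanating from the ``zero-action orbit'' $[W]$), and moreover $x$ and $y$ belong to the same cluster $i$ by Lemma \ref{lemma:inter-cluster}. Write $x = x_{ij}^{k}$ and $y = x_{ij'}^{k'}$.

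Next I would invoke the index recurrence bounds. Since $\CA(x) = a < C_l$, \eqref{eq:IR-A1} gives $\mu_+(x) \leq d_{il} + n - 1$. Since $\CA(y) = b > C_l$, \eqref{eq:IR-A2} gives $\mu_-(y) \geq d_{il} + n + 1$. Subtracting yields $\mu_-(y) - \mu_+(x) \geq 2$. But property \ref{bar-indices} forces $\mu_-(y) - \mu_+(x) \leq 2$, so equality holds. Then \ref{bar-degrees} gives
\[
\deg(I) = \mu_+(x) + 1 = \mu_-(y) - 1,
\]
and combining with the two inequalities above squeezes $\deg(I) = d_{il} + n$. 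Thus every bar contributing to $\SH^{C_l}(W)$ has degree of the form $d_{il} + n$ for some $i \in \{1, \dots, i_0\}$, which yields \eqref{eq:IRT-support}; the parity assertion is immediate since each $d_{il}$ is even.

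The argument is essentially an assembly of previously established ingredients, so the ``main obstacle'' here is not so much a technical difficulty as the conceptual point of recognizing that the boundary depth bound, which forces bars to be short, combined with Lemma \ref{lemma:inter-cluster}, which forbids inter-cluster bars above a threshold, allows us to restrict to bars whose two endpoints lie in a common cluster. Once this is in place, the index recurrence bounds squeeze the degree to the single value $d_{il} + n$, and no further work is needed.
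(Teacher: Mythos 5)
Your proof is correct and follows essentially the same route as the paper's: both use the boundary depth bound from Theorem \ref{thm:vanishing} together with Lemma \ref{lemma:inter-cluster} to confine the endpoints of any live bar to a common cluster, then squeeze the bar's degree to $d_{il}+n$ via \eqref{eq:IR-A1}, \eqref{eq:IR-A2}, \ref{bar-indices} and \ref{bar-degrees}. The only difference is cosmetic — you spell out explicitly that $\beg(I)\neq[W]$ and that $C_l\notin\CS(\alpha)$ forces strict inequality at the right endpoint, which the paper leaves implicit.
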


\begin{proof}
  Note that $\dim\SH^{C_l} (W)$ is equal to the number of bars
  containing $C_l$. Since $b-a<\Cbar$ by Theorem \ref{thm:vanishing}
  for any bar $(a,b]$, the bars $(a,b]$ with $a < C_l -\Cbar$ do not
  contribute to $\dim\SH^{C_l} (W)$. Next, fix a bar $(a,b]$
  containing $C_l$, beginning at $x_{ij}^k$ and ending at
  $x_{i'j'}^{k'}$. Let us also assume from now on that
  $C_l>K:=\Kbar+\Cbar$. Then $a>\Kbar$, and hence, by Lemma
  \ref{lemma:inter-cluster}, $i=i'$, i.e., these orbits are in the
  same cluster.

  We claim that when $C_l>K$ the degree of $I=(a,b]$ is in the set
  $\{d_{il}+n\}$. This will prove \eqref{eq:IRT-support} since the
  equality simply asserts that $\dim\SH_{m} ^{C_l} (W)=0$ when
  $m\neq d_{il}+n$.

  To see that $\SH^{C_l} (W)$ is supported in the degrees $d_{il}+n$,
  first note that
$$
a=\CA\big(x_{ij}^k\big)< C_l < \CA\big(x_{ij'}^{k'}\big)=b.
$$
Hence, by \eqref{eq:IR-A1} and \eqref{eq:IR-A2},
\[
  \mu_+\big(x_{ij}^k\big) \leq d_{il} + n-1 \quad \textrm{and} \quad
  \mu_-\big(x_{ij'}^{k'}\big) \geq d_{il} + n+1.
\]
(Here we are using the fact that $i=i'$, i.e., the orbits are in the
same cluster.) Due to \ref{bar-indices}, we have equality in both of
these inequalities and the index difference is exactly $2$. Therefore,
by \ref{bar-degrees},
$\deg(I)=\mu_{+}\big(x_{ij}^{k}\big)+1=d_{il} + n$.
\end{proof}

It remains to show that $D:=\dim\SH^{C_l} (W)=1$. By
\eqref{eq:Euler},
$$
\sum_m(-1)^m\dim\SH^{C_l}_m(W)=(-1)^n\chi(W)=(-1)^n.
$$
At the same time, the left-hand side is equal to $(-1)^nD$ by Lemma
\ref{lemma:IRT-support}. Indeed, each term on the left is either $0$
when $m\neq d_{il}+n$ or $(-1)^n\dim\SH^{C_l}_m(W)$ when $m=d_{il}+n$
since $d_{il}$ is even. Thus
$$
\sum_m(-1)^m\dim\SH^{C_l}_m(W)=(-1)^n\dim\SH^{C_l}(W)=(-1)^nD.
$$
As a consequence, $(-1)^n D=(-1)^n$, i.e., $D=1$. This completes the
proof of the proposition and hence of the theorem. \hfill\qed

\subsubsection{Proof of Corollary \ref{cor:SH}}
\label{sec:corSH-pf}
The first assertion is a consequence of \eqref{eq:dimgeq2},
\eqref{eq:bars-SH} and Theorem \ref{thm:SH}. Indeed, if we had two
$\F$-visible orbits $x$ and $x'$ with action $a$, by
\eqref{eq:dimgeq2} and \eqref{eq:bars-SH}, we would have at least four
bars with end-point $a$ and hence $\dim\SH^t(W)\geq 2$ for some $t$
close to $a$.  Likewise, $\dim \SH(x;\F)=2$ for all $\F$-visible
orbits by \eqref{eq:bars-SH} and Theorem \ref{thm:SH}.  Now the second
assertion follows immediately from \eqref{eq:SH-CH} and the first
one. \hfill\qed

\subsection{Proof of Corollary \ref{cor:ellipsoids}}
\label{sec:ellipsoids-pf}
As was already mentioned in Section \ref{sec:ellipsoids}, by Part
\ref{O-clusters} of Theorem \ref{thm:Orbits}, we can scale $M$ so that
$\hmu(x)=\CA(x)$ for all closed orbits in $M$. From now on we assume
that this condition is satisfied. Denote by $x_1,\ldots, x_n$ the
prime closed orbits in $M$.  Then the action spectrum of $M$ is the
same as the mean index spectrum and is simply the union of the
sequences $\Delta_j\N$.  Note also that we have
$\Delta_j/\Delta_i\not\in \Q$, when $i\neq j$, again by
\ref{O-clusters}, since all closed orbits have distinct actions due to
Corollary \ref{cor:SH}.
  
The prime closed orbits in $M_{\vDelta}$ have the form
$$
y_j(t)=(0,\ldots,0, z_j(t),0,\ldots, 0),
$$
where $z_j(t)=\sqrt{\Delta_j}\exp\big(2\sqrt{-1}t/\Delta_j \big)$ with
$t\in [0, \pi\Delta_j)$. Therefore, in suitable coordinates, the
Poincar\'e return map $\Phi_j$ along $y_j$ is the $(n-1)\times (n-1)$
diagonal matrix with entries $\exp(2\pi\sqrt{-1}\lambda_{ij})$, where
$\lambda_{ij}=\Delta_j/\Delta_i$ and $i\neq j$. Since
$\Delta_j/\Delta_i\not\in \Q$, all orbits $y_j$ are strongly
non-degenerate and all eigenvalues of $\Phi_j$ are elliptic. Note,
however, that the ``logarithmic'' eigenvalues $\lambda_{ij}$ are not
normalized in any sense. For instance, the eigenvalues $\lambda_{ij}$
can be arbitrarily large or small and need not be of first Krein type.

Recall that the non-resonance condition is that for every $j$ the
ratios $\pm \Delta_j/\Delta_i=\pm\lambda_{ij}$ are distinct modulo
$\Z$ for every fixed $j$ and all $i\neq j$. This guarantees that the
first Krein type eigenvalues are distinct for every $j$ and are not
complex conjugate to each other.

A straightforward calculation shows that 
$$
\hmu(y_j)=2\Delta_j\sum_{i=1}^n\frac{1}{\Delta_i}.
$$
As a consequence of Theorem \ref{thm:mult}, the prime orbits
$x_1,\ldots, x_n$ are non-alternating. By \cite{Vi:R}, the sum in this
formula is $1/2$; see also \cite{GiK}. Therefore,
\begin{equation}
  \label{eq:hmu-xj-yj}
\hmu(y_j)=\Delta_j=\hmu(x_j).
\end{equation}

Clearly, $\CA(x_j^k)=k\Delta_j=\CA(y_j^k)/\pi$.  By Theorem
\ref{thm:Orbits}, for each $m\in\N$, the $m$th entry in the action
spectrum, for $M$ and $M_{\vDelta}$, is the value of the $m$th
$S^1$-equivariant spectral invariant over $\Q$, which is attained on
an orbit of index $n+m$; see \cite{GG:LS}.  It follows that
$\mu\big(x_j^k\big)=\mu\big(y_j^k\big)$. By \cite[Cor.\
4.3]{GG:RvsPR}, for every $j$ the linearized return maps along $y_j$
and $x_j$ have the same first Krein type eigenvalues, which are all
distinct. Furthermore, the linearized flows also have the same mean
index by \eqref{eq:hmu-xj-yj}. As a consequence, the linearized flows
along $x_j$ and $y_j$ agree as elements of $\TSp(2(n-1))$ up to
conjugation.  \hfill\qed

\begin{Remark}
  Without the non-resonance condition, the argument still provides
  some comparison information about the linearized flows along $x_j$
  and $y_j$. Namely, it shows that the return maps have the same first
  Krein type eigenvalues, counted with multiplicity which is defined
  as follows. Assume that a linear symplectic map has both $w$ and
  $\bar{w}$ as first Krein type eigenvalues and their algebraic
  multiplicities are $m$ and, respectively, $l$. Then we set, by
  definition, the multiplicity of $w$ to be $m-l$ and the multiplicity
  of $\bar{w}$ to be $l-m$; see \cite[Sec.\ 4]{GG:RvsPR}. (Note that
  with this way of counting multiplicity an elliptic orbit such that
  its first Krein type eigenvalues come in complex conjugate pairs is
  indistinguishable from a hyperbolic orbit.)  Furthermore, as in the
  non-resonance case, $x_j$ and $y_j$ have the same mean index (see
  \eqref{eq:hmu-xj-yj}) and also, up to a factor, the same action.
\end{Remark}  

\section{Proof of the index recurrence theorem}
\label{sec:IRT-pf}
Our goal in this section is to prove the index recurrence theorem,
Theorem \ref{thm:IR-A}. The section is broken down into two parts: we
first review some standard facts from geometry of numbers and then
turn to the actual proof.

\subsection{Input from geometry of numbers}
\label{sec:numbers}
The existence of the sequences $C_l$, $d_{il}$ and $k_{ijl}$ with the
required properties hinges on two standard facts along the lines of
Kronecker's and Minkowski's theorems; see, e.g., \cite{Ca}. These
facts are completely standard and obvious to the experts.

Let $U\subset \T^n=\R^n/\Z^n$ be a neighborhood of $0$ and
$\theta\in \T^n$. Consider the sequence
$k_1\leq k_2\leq k_3\leq \ldots$ of all positive integers such that
$k_l\theta\in U$. Clearly, $k_l\to\infty$. Moreover, it is well known
that the orbit $\{k\theta\mid k\in \N\}$ is equidistributed in the
closure $G$ of $\theta\Z$ in $\T^n$. In particular, the density of
this sequence in $\N$ is equal to the $\dim G$-dimensional volume of
$U\cap G$.

\begin{Lemma}
  \label{lemma:gap}
The sequence $k_l$ has bounded gap. 
\end{Lemma}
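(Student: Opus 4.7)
The plan is to deduce the bounded-gap property from minimality of the translation $T\colon G\to G$ defined by $x\mapsto x+\theta$ on the compact abelian group $G=\overline{\mathbb{Z}\theta}\subset\mathbb{T}^n$, using the standard fact that a minimal continuous action on a compact space has the syndetic return property.

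First, I would observe that $G$ is a closed subgroup of $\mathbb{T}^n$, hence a compact abelian Lie group containing both $0$ and $\theta$, and that $V:=U\cap G$ is a nonempty open neighborhood of $0$ in $G$ since $0\in U$. In particular, the sequence $k_l$ coincides with the times at which the $T$-orbit of $0$ enters $V$.

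Next, I would establish that $T$ acts minimally on $G$. For every $x\in G$ the $T$-orbit is the coset $x+\mathbb{Z}\theta$, whose closure is $x+\overline{\mathbb{Z}\theta}=x+G=G$. Thus every orbit is dense, i.e.\ $T$ is minimal on $G$.

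Now I would invoke the usual compactness argument: the translates $T^{-j}(V)$, $j\geq 0$, are open subsets of $G$ whose union, by minimality, equals all of $G$. By compactness, there exists $N\in\mathbb{N}$ with
\[
G=\bigcup_{j=0}^{N-1}T^{-j}(V).
\]
Applying this to the point $k\theta\in G$ (for any $k\in\mathbb{N}$) yields some $j\in\{0,\ldots,N-1\}$ with $(k+j)\theta\in V\subset U$. Hence the next return time after $k_l$ satisfies $k_{l+1}\leq k_l+N$, proving the lemma.

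The only delicate point is ensuring minimality \emph{for all} starting points (not only for $0$); this is handled by the group-theoretic observation above, which relies crucially on $\theta\in G$. Everything else is routine compactness.
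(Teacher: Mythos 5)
Your proof is correct and is essentially the same compactness argument as the paper's: cover the compact group $G$ by finitely many translates of the neighborhood $U$ (or $V=U\cap G$), then conclude that every point of $G$ returns to $U$ within a bounded number of steps. The paper uses two-sided translates $s_i\theta+U$ with $s_i\in\Z$ followed by a short bookkeeping step via the auxiliary sequence $q_\ell$, whereas you phrase the same idea through minimality of the rotation $T$ and one-sided translates $T^{-j}V$, $j\ge 0$, which gives the syndetic bound $k_{l+1}\le k_l+N$ directly.
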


For the sake of completeness we include a short proof of the lemma.

\begin{proof}
  Since $G$ is compact, there exists a finite collection of integers
  $S=\{s_1,\ldots, s_N\}$ such that $G$ is covered by the shifted
  neighborhoods $s_i\theta+ U$. Thus, for every $\ell\in\N$, we have
  $\ell\theta\in s_i\theta+U$ for some $i$, which might not be
  unique. We pick one such $i$ and write $s_i=: s(\ell)$.

  Set $q_\ell=\ell-s(\ell)$. Clearly, $q_\ell\theta\in U$. Thus the
  sequence $q_\ell$ takes values in the set $M:=\{k_l\mid l\in\N\}$
  and $q_\ell\to\infty$ since the set $S$ is finite. The maximal gap
  in the sequence $k_l$ is equal to the maximal gap in the set $M$
  which is bounded from above by the maximal gap in the sequence
  $q_\ell$. The latter sequence has bounded gap, for
$$
|q_{\ell+1}-q_\ell|\leq 1 + |s(\ell+1)-s(\ell)|\leq 1+\max_{i,j\in S}|
s_i-s_j|.
$$
\end{proof}

We will also need the following consequence of Lemma \ref{lemma:gap}
along the lines of Minkowski's theorem on linear forms; see, e.g.,
\cite{Ca}.  Consider a collection of $p<n $ linear maps
$$
f_s\colon \R^n\to \R, \quad s=1,\ldots, p
$$
and positive constants $\delta_1>0, \ldots, \delta_p>0$. We are
interested in integer solutions $K\in\Z^n$ of the system of
inequalities
\begin{equation}
  \label{eq:system}
|f_s(K)|<\delta_s, \quad s=1,\ldots, p.
\end{equation}

\begin{Lemma}
  \label{lemma:Mink}
  The system of inequalities \eqref{eq:system} has a sequence of
  non-zero solutions $K_l\to \infty$ in $\Z^n$ with bounded gap, i.e.,
  such that the norm of $K_l$ goes to $\infty$, but the norm of the
  difference $K_{l+1}-K_l$ is bounded by a constant independent of
  $l$. Moreover, we can make all components of $K_l$ divisible by any
  fixed integer.
\end{Lemma}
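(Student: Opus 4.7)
My plan is to reduce Lemma~\ref{lemma:Mink} to Lemma~\ref{lemma:gap} after a dichotomy on the rationality of $\ker\Psi \subset \R^n$, where $\Psi := (f_1,\ldots,f_p)\colon \R^n\to\R^p$. Before the split, the divisibility claim is disposed of by a scaling trick: if a sequence $K_l$ works for the system with $\delta_s$ replaced by $\delta_s/N_0$, then $N_0 K_l$ solves the original system, is componentwise divisible by $N_0$, still tends to infinity, and retains bounded gap (the gap is merely multiplied by $N_0$). So one may assume $N_0 = 1$.

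In the first case, $V := \ker\Psi$ contains a nonzero lattice vector $K_0 \in \Z^n$, and the sequence $K_l := l K_0$ immediately satisfies $\Psi(K_l) = 0 \in B := \prod_s(-\delta_s,\delta_s)$, tends to infinity, and has bounded gap $\|K_0\|$. In the remaining case $V \cap \Z^n = \{0\}$, so $\Psi|_{\Z^n}$ is injective; the image $\Psi(\Z^n)$ is then a subgroup of $\R^p$ of abstract rank $n$, which cannot be discrete since $n > p$ (a discrete subgroup of $\R^p$ has rank at most $p$). Hence $0$ is an accumulation point of $\Psi(\Z^n)$, and by a Dirichlet-type argument one can choose, for any given $M$, a lattice vector $v \in \Z^n$ with $\|\Psi(v)\|$ smaller than $M^{-1}\min_s \delta_s$.

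The crux is then to upgrade accumulation at $0$ into a \emph{bounded-gap} sequence of solutions. I would do this by applying Lemma~\ref{lemma:gap} to the rotation $\theta := \Psi(v) \bmod \Z^p$ on $\T^p$, after first shrinking $B$ so that it projects injectively onto an open neighborhood $U$ of $0 \in \T^p$. The lemma produces a bounded-gap sequence $l_j \to \infty$ in $\N$ with $l_j\theta \in U$, and the corresponding $K_j := l_j v \in \Z^n$ satisfies $\Psi(K_j) \in B$, tends to infinity, and has bounded gap $\|v\|\cdot\sup_j(l_{j+1}-l_j)$. The subtle point, which is the main obstacle, is ensuring that the lift $l_j \Psi(v) \in \R^p$ actually lies in $B$ rather than merely reducing to a point of $U$ modulo $\Z^p$; this is arranged by having chosen $\|\Psi(v)\|$ so small relative to the gap bound of Lemma~\ref{lemma:gap} that the integer offset between $l_j\Psi(v)$ and its class in $\T^p$ is forced to be zero over the relevant range of $l_j$. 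Once this quantitative matching is in place, the two cases assemble into the desired sequence, and the divisibility by $N_0$ is recovered by the initial rescaling.
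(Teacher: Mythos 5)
Your dichotomy on $\ker\Psi\cap\Z^n$ and the scaling trick for divisibility are both sound, and Case~1 is immediate. Case~2, however, does not close. There you have $\Psi(v)\neq 0$ for every nonzero $v\in\Z^n$. After running Lemma~\ref{lemma:gap} on $\T^p$ with $\theta=\Psi(v)\bmod\Z^p$, you set $K_j=l_jv$ and need $\Psi(K_j)=l_j\Psi(v)\in B$. But $\|l_j\Psi(v)\|=l_j\|\Psi(v)\|\to\infty$ because $\Psi(v)\neq 0$ and $l_j\to\infty$, so for all but finitely many $j$ the lift $l_j\Psi(v)$ leaves the bounded region $B$ and $K_j$ fails to solve \eqref{eq:system}. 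No choice of a Dirichlet vector $v$ with $\|\Psi(v)\|$ small fixes this: once $l_j$ exceeds (diameter of $B$)$/\|\Psi(v)\|$ the integer offset between $l_j\Psi(v)$ and its reduction in $U$ is nonzero, yet the lemma requires $l_j$ unbounded to force $K_j\to\infty$. The step you flagged as the ``subtle point'' is exactly where the proof fails, and the circularity you would need ($\|\Psi(v)\|$ small relative to a gap bound that itself depends on $v$ through $\theta$) is a symptom of the same obstruction.

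The paper's argument avoids this by applying Lemma~\ref{lemma:gap} on $\T^n$ rather than on $\T^p$, and the change of torus is not cosmetic. Since $p<n$, one can pick a nonzero $\ttheta\in\ker\Psi\subset\R^n$ and project it to $\theta\in\T^n$. Lemma~\ref{lemma:gap} then produces a bounded-gap sequence $k_l$ with $k_l\theta$ in a small neighborhood of $0$ in $\T^n$, i.e.\ $k_l\ttheta=K_l+v_l$ with $K_l\in\Z^n$ and $v_l$ in a fixed small ball in $\R^n$. Now $f_s(K_l)=f_s(k_l\ttheta)-f_s(v_l)=-f_s(v_l)$ because $f_s(\ttheta)=0$ exactly, so $|f_s(K_l)|<\delta_s$ uniformly in $l$ even though $k_l\to\infty$. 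The crucial point is that the vector being iterated lies in $\ker\Psi$ on the nose, so the error $\Psi(K_l)=\Psi(-v_l)$ never accumulates with $l$; your vector $v$, with $\Psi(v)$ small but nonzero, cannot play this role. To repair your proof, replace the orbit of $v$ by the orbit of a genuine kernel vector $\ttheta$, and do the lattice approximation in the domain torus $\T^n$.
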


Since $p<n$, this system has infinitely many distinct solutions by the
Minkowski theorem; see \cite{Ca}. However, the standard proof of this
theorem does not immediately imply the bounded gap part. Hence we
include a proof of the lemma for the sake of completeness.

\begin{proof}[Proof of Lemma \ref{lemma:Mink}] Let $\tU\subset \R^n$
  be a small ball centered at the origin such that
  $|f_s|_{\tU}|<\delta_s$ for all $s$ and the balls $K+\tU$ for all
  $K\in\Z^n$ do not overlap. Denote by $U$ the projection of $\tU$ to
  $\T^n=\R^n/\Z^n$.

  Since $p<n$, the hyperplanes $\ker f_s$ have a non-zero
  intersection. Pick a non-zero vector
  $$
  \ttheta\in \bigcap_{s=1}^p\ker f_s.
  $$
  Without loss of generality, we may assume that $\ttheta\not\in \Z^n$
  for otherwise we can simply set $K_l=l\ttheta$. Thus the projection
  $\theta$ of $\ttheta$ to $\T^n$ is also non-zero. By Lemma
  \ref{lemma:gap}, there exists a bounded gap sequence $k_l\to \infty$
  such that $k_l\theta\in U$ or equivalently
  $$
  k_l\ttheta\in K_l+ \tU
  $$
  for some sequence $K_l\in\Z^n$. Clearly, $K_l$ has bounded gap and
  $K_l\to\infty$.

  We claim that $K_l$ satisfies the system of inequalities,
  \eqref{eq:system}. Indeed, let us write
  $$
  k_l\ttheta= K_l+v_l
  $$
  where $v_l\in \tU$. Then $f_s(k_l\ttheta)=0$, and hence
  $$
  |f_s(K_l)|=|f_s(v_l)|<\delta_s.
  $$
  Replacing $\ttheta$ by $N\ttheta$, we can make all components of
  $K_l$ divisible by $N\in\N$ if needed.
\end{proof}

\subsection{Proof of Theorem \ref{thm:IR-A}}
The strategy of the proof is as follows. After settling some
preliminaries in Sections \ref{sec:isospectral} and
\ref{sec:prelim-obs}, we show in Section \ref{sec:IRT-pf-r=1} that for
any element $\Phi:=\Phi_{ij}$ assertions \ref{IRA0}--\ref{IRAa} are
formal consequences of the requirement that $k=k_{ijl}$ satisfies a
certain system of inequalities completely determined by $\Phi$. Here
$d:=d_{il}$ is determined by $k$ and, of course, $\Phi$ by the first
part of \ref{IRA0}. Furthermore, \ref{IRAa} can be taken as the
defining condition for $C_l$. (The existence of an infinite bounded
gap sequence of solutions $k$ of this system of inequalities is a
consequence of Lemma \ref{lemma:gap}.)

As the second step of the proof, we prove in Section
\ref{sec:CGT-pf-gen} that for a finite collection of maps $\Phi_{ij}$
as in the theorem the resulting systems of inequalities have bounded
gap sequences of solutions $k_{ijl}$ such that $d_{il}$ is independent
of $j$ within every cluster and $C_l$ is independent of $i$ and
$j$. This is done by using Lemma \ref{lemma:Mink}. This independence
is the key new component of the second step. For simply applying the
first step to each $\Phi_{ij}$ individually would result in $d_{il}$
depending on $j$ and $C_l$ depending in addition on $i$ and $j$.

\subsubsection{Isospectral families}
\label{sec:isospectral}
In the proof, we will use a general fact from symplectic linear
algebra, which allows one to simplify certain index
calculations. Recall that a linear map is said to be \emph{semisimple}
if for every eigenvalue its algebraic multiplicity is equal to the
geometric multiplicity. It is essential for what follows that
  $P\in\Sp(2m)$ is semisimple if and only if it is conjugate in
  $\Sp(2m)$ to a direct sum of the following linear symplectic maps:
  rotations of $\R^2$ including possibly the identity map, hyperbolic
  maps of $\R^2$, hyperbolic maps of $\R^4$ with complex
  coefficients. (This fact is not obvious, but follows immediately
  from the Williamson normal form theorem; \cite{Wi}.) Furthermore, a
family of linear maps $P_t$, $t\in [0,1]$, is said to be
\emph{isospectral} if the eigenvalues of $P_t$ are constant. (Then, as
a consequence, they are constant as a multiset.)

For such a family, the mean index $\hmu(P_t)$ and, when $P$ is
non-degenerate, the Conley--Zehnder index $\mu(P_t)$ are constant as
follows immediately from the definitions; see, e.g., \cite{Lo,SZ}.
(The non-degeneracy condition is essential: more sensitive
invariants such as $\mu_\pm$ or $\beta_\pm$ need not be constant.)

\begin{Lemma}
\label{lemma:isospectral}
  Every $P\in\Sp(2m)$ can be connected in $\Sp(2m)$ to a semisimple
  linear map by an isospectral family.
  \end{Lemma}

  As a consequence, in many index calculations we can assume that the
  map is semisimple by replacing $P_0$ by $P_1$. Lemma
  \ref{lemma:isospectral} has certainly been known in some contexts,
  but we are not aware of any reference and we include a proof for the
  sake of completeness. In fact, we need the lemma only when $P$ is
  elliptic and non-degenerate, but the general case might be of
  independent interest and the proof under these conditions is not
  much shorter. In the proof we employ the variant of
    Williamson's normal forms, \cite{Wi}, from \cite{CK}; see also
    \cite{LM} and \cite[Sect.\ 2.6]{CLW}. (There is a typo in
    \cite[(4), p.\ 120--121]{CLW}: $I_2$ should be the $2\times 2$
    identity matrix.) Note also that various sources use different
    sign conventions in the Hamilton equation. Clearly, this has no
    effect on the normal forms, Lemma \ref{lemma:isospectral} and
    other proofs in the paper. In the proof of the lemma we adopt the
    conventions from \cite{CK}.

  \begin{proof} By decomposing $P$ as a direct sum depending on the
    type of eigenvalues, we only need to prove the lemma when $P$ is
    one of the following three types:
    \begin{itemize}

    \item All eigenvalues of $P$ are elliptic (i.e., unit) and
      different from $-1$.

        \item The only eigenvalue is $-1$.

    \item All eigenvalues are hyperbolic, i.e., not unit.

\end{itemize}

We will start with the first case. Since $P$ has no real negative
eigenvalues, it lies in the image of the exponential map
$\exp\colon \spl(2m)\to \Sp(2m)$, \cite{Wi:exp}. Thus
$P=e^A$, where $A$ is a linear Hamiltonian operator, aka a linear
Hamiltonian vector field, and it is sufficient to connect $A$ to a
semisimple operator in $\spl(2m)$. When we order the linear Darboux
coordinates as $(p_1,\ldots, p_m, q_1, \ldots, q_m)$, the operator $A$
takes the form
$$
A=\left[
  \begin{array}{cc}
    A_{11} & A_{12}\\
    A_{21} & A_{22}
  \end{array}
\right],
$$
where $A_{11}\in \gl(n)$ is an arbitrary matrix,
$A_{22}=-A_{11}^\top$, and $A_{12}$ and $A_{21}$ are symmetric:
$A_{12}=A_{12}^\top$ and $A_{21}=A_{21}^\top$. Furthermore, since $P$
is elliptic, $A$ has pure imaginary eigenvalues. Let us denote the
spectrum of $A$ by $\sigma(A)$ with eigenvalues taken without
multiplicity. Without loss of generality, we may assume that $A$ does
not have semisimple eigenvalues.  Then, depending on $\sigma(A)$, the
operator $A$ breaks down into a direct sum of the following four
normal forms:
\begin{itemize}
\item[\reflb{NF1}{(NF1)}] In this case $\sigma(A)=\{0\}$, $m$ is odd,
  and $A_{11}$ is the standard Jordan block of size $m$ with
  eigenvalue 0, i.e.,
\[
A_{11}=\left[
  \begin{array}{cccc}
    0 & 1 &  &  \\
    & \ddots  &\ddots  & \\
    & &\ddots &1\\
    & & &  0\\
  \end{array}
\right]
\]
with the remaining empty spots standing for zeros, and
$A_{12}=0=A_{21}$. For $m=1$, we set $A=0$.

\item[\reflb{NF2}{(NF2)}] As above, $\sigma(A)=\{0\}$ and
  $A_{11}$ is the standard Jordan block of size $m$ with
    eigenvalue 0 and $A_{21}=0$, but $m$ is arbitrary and all entries
    of $A_{12}$ are zero except the bottom right corner which is
  $\pm 1$. When $m=1$, this is the $2\times 2$ matrix with $\pm 1$ in
    the top right corner and the remaining entries zero.

\item[\reflb{NF3}{(NF3)}] Here $\sigma(A)=\{\sqrt{-1}b\}$ with $b> 0$,
  $m$ is even, and
\[
A_{11}=\left[
  \begin{array}{cccc}
    B & I &  &  \\
    & \ddots  &\ddots  & \\
    & &\ddots & I\\
    & & &  B\\
  \end{array}
\right],
\]
where
\[
B=\left[
  \begin{array}{cc}
    0 & b\\
    -b & 0
  \end{array}
\right]
\textrm{ and }
I=\left[
  \begin{array}{cc}
    1 & 0\\
    0 & 1
  \end{array}
\right]
\]
and the empty spots are again zeros.  Furthermore, $A_{21}=0$ and all
entries of $A_{12}$ are zero except the $2\times 2$ block $\pm I$ at
the bottom right corner. When $m=2$,
$$
A=\left[
  \begin{array}{cc}
    B & \pm I\\
    0 & B
  \end{array}
\right].
$$

\item[\reflb{NF4}{(NF4)}] In this case $\sigma(A)=\{\sqrt{-1}b\}$ with
  $b\neq 0$, $m$ is odd, and $A_{11}$ is the standard Jordan
  block with eigenvalue 0 as in \ref{NF1}. However, $A_{12}$ and
  $A_{21}$ are anti-diagonal matrices:
\[
A_{12}=\left[
  \begin{array}{ccccc}
    &  &  & & b \\
    &  &  & -b & \\
    &  &\iddots  & & \\
    & -b &   &  & \\
   b &  &   &  & \\
  \end{array}
\right]=-A_{21}.
\]
For $m=1$, we have
$$
A=\left[
  \begin{array}{cc}
    0 & b\\
    -b & 0
  \end{array}
\right].
$$
\end{itemize}
These normal forms, originating \cite{Wi}, are taken from \cite[p.\
63: (aii), (ai), (di), (dii)]{CK}.

Next, let us replace $\pm 1$ (but not $\pm b$) by $\pm t$ in all
occurrences in these matrices. For instance, in \ref{NF3} $I$
becomes $t I$ in $A_{11}$, etc. In each case, we obtain a family of
Hamiltonian operators $A(t)$, $t\in [0,1]$, with $A(1)=A$.

It is easy to see that $A(0)$ is semisimple in all four cases. Indeed,
$A(0)=0$ in \ref{NF1} and \ref{NF2}. In \ref{NF3} and \ref{NF4},
$A(0)$ is equivalent via a linear symplectic change of variables to a
direct sum of infinitesimal rotations with eigenvalues
$\pm \sqrt{-1}b$ in $m$ symplectic planes.

We claim that the four families $A(t)$ are isospectral. Let
$\rho_t(z)=\det\big(zI-A(t)\big)$ be the characteristic polynomial. It
is sufficient to show that $\rho_t(z)$ is independent of $t$ and this
can be done by a direct calculation. In \ref{NF1} or \ref{NF2}, we
have the determinant of a block-diagonal or block-triangular matrix
with triangular matrices on the diagonal, and $\rho_t(z)=z^{2m}$ for
all $t$. Likewise, in \ref{NF3} we have the determinant of a
block-triangular matrix with block-triangular matrices on the
diagonal, and $\rho_t(z)=(z^2+b^2)^{m}$. To calculate $\rho_t(z)$ in
\ref{NF4}, we expand the determinant in the first row and then expand
each of the resulting two non-zero terms in the last row. Then
$\rho_t(z)$ factors as the product of $z^2+b^2$ and the characteristic
polynomial of a $(2m-2)\times(2m-2)$ matrix of exactly the same form,
but with $b$ replaced by $-b$. (The resulting matrix is not a normal
form because $m-1$ is even. However, this does not matter since we are
just calculating the characteristic polynomial.)  Proceeding
inductively, we again see that $\rho_t(z)=(z^2+b^2)^{m}$. This
concludes the proof for an elliptic map $P$ without eigenvalue $-1$.

The second case is where $-1$ is the only eigenvalue of $P$. Then $-P$
is covered by the previous case. Thus there exists an isospectral
family connecting $-P$ to a semisimple symplectic map. (In fact, this
map is $I$ because $1$ is the only eigenvalue of $-P$.)  Multiplying
this family by $-I$, we obtain the required isospectral family
for~$P$.

The remaining case is where $P$ is hyperbolic. This case can be dealt
with in a fashion similar to the first two, but a much simpler proof
is available. Namely, $\R^{2m}=W_-\oplus W_+$ be the decomposition of
$\R^{2m}$ into subspaces corresponding to the eigenvalues $\lambda$ of
$P$ with $|\lambda|<1$ and, respectively, $|\lambda|>1$. These are
$P$-invariant Lagrangian subspaces. As a consequence, the symplectic
form gives rise to an isomorphism $W_+=W_{-}^{*}$ and $P=(P_-,\, P_+)$
with $P_+=(P_{-}^{-1})^\top$ in this decomposition. (In other words,
identifying $\R^{2m}=W_-\oplus W_{-}^{*}$ with $T^*W_-$ we identify
$P\colon T^*W_-\to T^*W_-$ with the map induced by
$P_{-}^{-1}\colon W_-\to W_-$.) Let us connect $P_-$ to an semisimple
linear map by a family $P_-(t)\colon W_-\to W_-$ of isospectral linear
maps. The existence of such a family immediately follows from the
Jordan normal form theorem. Then
$P_t:=\big( P_-(t),\, (P_{-}^{-1}(t))^\top \big)$ is a family of
isospectral maps in $\Sp(2m)$ connecting $P$ to a semisimple map.
This completes the proof of Lemma \ref{lemma:isospectral}.
  \end{proof}

  \begin{Remark}
    Throughout the proof of the lemma, one could have used the
      normal forms for symplectic matrices from \cite{Wi}, but the
      proof would be more involved. The last step of the proof can
      also be used to directly obtain the normal forms of hyperbolic
      symplectic or Hamiltonian operators from the Jordan normal form
      theorem.
  \end{Remark}    

  \begin{Remark}
    Alternatively, one can prove the lemma as follows. The
      symplectic Jordan decomposition of $P$ has the form $P=P_0U$,
      where $P_0$ is symplectic and semisimple, $U$ is symplectic and
      unipotent, i.e., $(U-I)^{2m}=0$, and $P_0$ and $U$
      commute. Furthermore, we have $U=\exp(N)$, where we specifically
      take $N\in\spl(2m)$ to be $\log U$. Then $N$ also commutes with
      $P_0$ and is nilpotent. Finally, the required isospectral family
      is $P_t=P_0\exp(tN)$, $t\in[0,1]$.
  \end{Remark}

  \subsubsection{Preliminary observations}
  \label{sec:prelim-obs}
  Before turning to the actual proof of Theorem \ref{thm:IR-A} it is
  useful to make a couple of simple observations.

  The claim that the sequences $C_l$, $d_{il}$ and $k_{ijl}$ can be
  taken strictly increasing is a formal consequence of the rest of the
  theorem. Indeed, observe that every bounded gap sequence
  $d_{il}\to\infty$ has a bounded gap strictly increasing refinement
  $s\mapsto d_{il_s}$. Moreover, for any $c>0$ we may ensure by
  further thinning the sequence that $d_{il_{s+1}}-d_{l_s}>c$. Hence,
  by passing to subsequences of $d_{il}$ and $k_{ijl}$, we can
  guarantee that $d_{il}$ is strictly increasing, $d_{il+1}-d_{il}>c$
  and these sequences still have bounded gap. Then, since
  $\hmu\big(\Phi^{k_{ijl}}_{ij}\big)=k_{ijl}\hmu(\Phi_{ij})$ and
  $\hmu(\Phi_{ij})>0$, we conclude from \ref{IRA0} that the sequences
  $k_{ijl}$ are automatically strictly increasing when $c$ is
  sufficiently large. Likewise, the sequence $C_l$ is forced to be
  strictly increasing by \ref{IRAa} once $\eta>0$ is sufficiently
  small.

  In a similar vein, again by \ref{IRA0}, the sequences $d_{il}$ and
  $k_{ijl}$ have bounded gap if and only if one of them does. Thus it
  suffices to prove this for the sequences $k_{ijl}$.

  To summarize, we only need to show that there exist sequences $C_l$,
  $d_{il}\to\infty$ and $k_{ijl}\to \infty$ satisfying the
  requirements of the theorem and such that one of the sequences
  $k_{ijl}$ has bounded gap, and that we can make $d_{il}$ and
  $k_{ijl}$ divisible by any pre-assigned integer.

  Finally, note that throughout the proof we can assume without loss
  of generality that $\eta>0$ is sufficiently small, e.g.,
  $\eta<\min a_{ij}$.

\subsubsection{The case of one map: $r=1$.}
\label{sec:IRT-pf-r=1} 
Let $\Phi=\Phi_{1j}\in\TSp(2m)$. Throughout the argument we suppress
$i$ and $j$ in the notation, i.e., we will write $k=k_l$ for
$k_{1jl}$, etc.

Let us decompose $\Phi$ as the product of a loop $\varphi$ and a
direct sum $\Phideg\oplus\Phi_h\oplus\Phi_e$ with the following
properties. The end-point $\Phideg(1)$ is totally degenerate and
$\hmu(\Phideg)=0$. The path $\Phi_h$ has a hyperbolic end-point
$\Phi_h(1)$, while $\Phi_e$ is the elliptic part of $\Phi$, i.e., all
eigenvalues of $\Phi_e(1)$ are on the unit circle and different from 1
due to the presence of $\Phideg$. We write the eigenvalues of
$\Phi_e(1)$ as $\exp(\pm 2\pi\sqrt{-1}\lambda_q)$, where
$0<|\lambda_q|\leq 1/2$ and $q=1,\ldots, m'$. Here for each pair of
complex conjugate eigenvalues it is convenient to pick the sign of
$\lambda_q$ so that the resulting eigenvalue is of the first Krein
type; see \cite{SZ}. (We will elaborate on this choice later.)
Furthermore, we then require that
\begin{equation}
  \label{eq:hmu-elliptic}
  \hmu(\Phi_e)=2\sum_q\lambda_q.
\end{equation}  
Note that one or more terms in this decomposition might be absent. For
instance, $\Phideg$ is not present when $\Phi(1)$ is non-degenerate.

It is easy to see that such a decomposition exists; see, e.g.,
\cite[Sec.\ 3]{SZ}. (In general, the decomposition is not unique in
$\TSp(2m)$ without extra assumptions on the index of $\Phi_h$. For
instance, the loop $\varphi$ could have been absorbed into $\Phi_h$;
it is needed only when $\Phi$ has no hyperbolic eigenvalues.)
Throughout the rest of the proof, we will assume that $\Phi_e$ is
present in the decomposition, i.e., $m'\geq 1$. It is not hard to see
from the proof that the argument goes through with minor modifications
when $\Phi_e$ is absent.

Set
\begin{equation}
\label{eq:eps1}
\eps_0=
\min\{\|\lambda_q\ell\|\mid
\lambda_q\ell\not\in\Z, 1\leq|\ell|\leq\ell_0, 1\leq q\leq m'\} >0,
\end{equation}
where $\|\cdot\|$ stands for the distance to the nearest integer. We
will require $\eps>0$ (and $\eta>0$) to be so small that
\begin{equation}
  \label{eq:pick-eps}
\eps\leq\eps_0\quad\textrm{and}\quad 2m'\eps<\eta<1/2.
\end{equation}
Furthermore, denote by $U\subset \T^{m'}$ the neighborhood of the unit
given by
$$
U=\big\{(e^{2\pi\sqrt{-1}\tau_1}, \ldots,
e^{2\pi\sqrt{-1}\tau_{m'}})\,\big|\, |\tau_1|<\eps, \ldots,
|\tau_{m'}|<\eps\big\},
$$
and set
$$
\theta=\big(e^{2\pi\sqrt{-1}\lambda_1}, \ldots,
e^{2\pi\sqrt{-1}\lambda_{m'}}\big)\in\T^{m'}.
$$

In what follows, to be consistent with Section \ref{sec:numbers}, we
will use additive notation for the group operations in $\T^{m'}$.

\begin{Lemma}
  \label{lemma:inequalities1}
  Assume that $k$ is such that $k\theta\in U$, or equivalently
\begin{equation}
\label{eq:eps2}
\|k\lambda_q\|<\eps \textrm{ for $q=1,\ldots, m'$},
\end{equation}
and that $k$ is divisible by the least common multiple $D$ of the
degrees of all roots of unity among the elliptic eigenvalues of
$\Phi$.  Let $d$ be the nearest integer to $\hmu\big(\Phi^k\big)$ or,
more explicitly,
\begin{equation}
  \label{eq:d}
  d=\big[k\hmu(\Phi)\big]=   k\big(\hmu(\varphi)+
  \hmu(\Phi_h)\big)+[k \hmu(\Phi_e)].
\end{equation}
Then conditions \ref{IRA0}--\ref{IRAsummand} are satisfied with
$\Phi:=\Phi_{ij}$, $k=k_{ijl}$, etc.
\end{Lemma}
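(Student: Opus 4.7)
The plan is to verify \ref{IRA0}--\ref{IRAsummand} by leveraging the decomposition $\Phi = \varphi \cdot (\Phideg \oplus \Phi_h \oplus \Phi_e)$: the loop $\varphi$ and hyperbolic piece $\Phi_h$ contribute integer-valued indices that scale linearly with $k$, the totally degenerate $\Phideg$ contributes $0$ to the mean index but enters through the $\beta_\pm$ invariants under iteration, and the elliptic $\Phi_e$ is the only piece responsible for non-integer behavior. The Diophantine hypothesis $\|k\lambda_q\| < \eps$ is designed precisely to make $\Phi_e^k$ a near-loop, and the divisibility $D \mid k$ kills the torsion elliptic eigenvalues, sending them to $1$.

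First I would establish \ref{IRA0}. By additivity of $\hmu$ under direct sum, $\hmu(\Phi^k) = k\hmu(\varphi) + k\hmu(\Phi_h) + 2\sum_q k\lambda_q$, and only the last summand may be non-integer. Writing $k\lambda_q = n_q + \tau_q$ with $n_q\in\Z$ and $|\tau_q|<\eps$, the integer $d$ in \eqref{eq:d} satisfies $|k\hmu(\Phi) - d| \le 2m'\eps < \eta$ by \eqref{eq:pick-eps}. Combining this with \eqref{eq:mu-del} and integrality of $\mu_\pm$ forces $\mu_\pm(\Phi^k) \in [d-m,\, d+m]$, which is the second part of \ref{IRA0}.

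The substantive content is \ref{IRA+} and \ref{IRA-}. My plan is to show that $\Phi^k$ admits a factorization (in $\TSp(2m)$) as $\alpha \cdot \Psi_k$, where $\alpha$ is a loop with $\hmu(\alpha) = d$ and $\Psi_k$ has endpoint equal to $\Phi^k(1)$ but with arbitrarily small $\mu_\pm$; concretely, $\alpha$ absorbs the integer winding $k\hmu(\varphi) + k\hmu(\Phi_h) + 2\sum n_q$. The choice $\eps \le \eps_0$ from \eqref{eq:eps1} is precisely what is needed so that for $1\le|\ell|\le\ell_0$ and each elliptic exponent with $\ell\lambda_q \notin \Z$, the sum $\tau_q + \ell\lambda_q$ does not cross an integer; hence the eigenvalues of $\Phi_e^{k+\ell}$ sit on the same side of the corresponding integer jumps as those of $\Phi_e^\ell$. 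Using the homotopy $\Phi^{k+\ell} \simeq \alpha \cdot \Phi^\ell$ together with additivity of $\hmu$, $\mu_\pm$ and $\beta_\pm$ under direct sum, and the fact that concatenation with a loop shifts $\mu_\pm$ by $\hmu$ of the loop, yields
\[
\mu_\pm(\Phi^{k+\ell}) = d + \mu_\pm(\Phi^\ell),
\]
which is \ref{IRA+}. For \ref{IRA-}, I would apply the same factorization to $\Phi^{k-\ell} \simeq \alpha \cdot \Phi^{-\ell}$ and then use $\mu_+(\Phi^{-\ell}) = -\mu_-(\Phi^\ell)$ together with the parity identities $\beta_\pm(\Phi^{-\ell}) = \beta_\mp(\Phi^\ell)$, which converts the formula into the form stated in \ref{IRA-} (the correction $\beta_+ - \beta_-$ vanishes when $\Phi^\ell$ is non-degenerate, and is bounded by $m$ via \eqref{eq:beta+-beta-}). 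Finally, \ref{IRAsummand} is essentially automatic: replacing $\Phi$ by its non-degenerate part $\Psi$ amounts to dropping the $\Phideg$ summand from the decomposition, which leaves $\alpha$, $d$, and the factorization above unchanged because $\hmu(\Phideg)=0$, and the invariants of $\Psi$ enter additively.

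The main obstacle will be the careful control of the elliptic jumps: making the heuristic $\Phi^{k+\ell} \simeq \alpha\cdot\Phi^\ell$ rigorous at the level of $\TSp(2m)$ requires showing that the residual small-index piece $\Psi_k$ can be absorbed into a path homotopy that does not change $\mu_\pm$. This is delicate because $\mu_\pm$ is only upper/lower semicontinuous, not continuous, near degenerate endpoints. The argument hinges on the two inequalities in \eqref{eq:pick-eps}: $\eps\le\eps_0$ controls jumps of the elliptic Conley--Zehnder contribution (so no eigenvalue of $\Phi_e^{k+\ell}$ crosses a unit-circle root of unity relative to $\Phi_e^\ell$), while $2m'\eps<\eta<1/2$ ensures the integer-valued index bounds follow from the real-valued mean-index estimate. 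The degenerate summand $\Phideg$ requires separate attention because $(\Phideg)^k$ remains totally degenerate with the same $\beta_\pm$ invariants by \eqref{eq:beta-it}, so its contribution to $\mu_\pm(\Phi^k)$ is $\pm\beta_\pm(\Phideg)$ independently of $k$, which is exactly what is needed for the loop part $\alpha$ to carry all the $k$-dependent winding.
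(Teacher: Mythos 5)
Your plan shares the paper's basic architecture --- decompose $\Phi$ as $\varphi\cdot(\Phideg\oplus\Phi_h\oplus\Phi_e)$ and exploit additivity of $\hmu$, $\mu_\pm$, $\beta_\pm$ under direct sum to reduce to blocks --- and your treatment of \ref{IRA0} is essentially the paper's. But the heart of the argument, \ref{IRA+} and \ref{IRA-}, rests on a heuristic that is genuinely false, not merely imprecise.

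The claimed identity $\Phi^{k\pm\ell}\simeq\alpha\cdot\Phi^{\pm\ell}$ in $\tSp(2m)$ cannot hold: the two sides have different endpoints unless $\Phi^k(1)=I$. You flag this and say the residual piece $\Psi_k$ "can be absorbed without changing $\mu_\pm$", but that is exactly what fails, and it fails in an essential way for \ref{IRA-}. Take the totally degenerate summand $\Phideg$. One has $\mu_+(\Phideg^{k-\ell})=\beta_+(\Phideg)$ (via \eqref{eq:bpm} and \eqref{eq:beta-it}), while the loop-shift heuristic predicts $\hmu(\alpha)+\mu_+(\Phideg^{-\ell})=\beta_+(\Phideg^{-\ell})=\beta_-(\Phideg)$. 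The discrepancy is precisely $\beta_+-\beta_-$, i.e.\ the correction term in \ref{IRA-}. Your attempt to recover it by invoking $\beta_\pm(\Phi^{-\ell})=\beta_\mp(\Phi^\ell)$ doesn't cohere: if the loop-shift property held as you state it, you would derive $\mu_+(\Phi^{k-\ell})=d-\mu_-(\Phi^\ell)$ with no correction, which is the wrong formula. The $\beta_+-\beta_-$ term is a genuine asymmetry between $\Phi^{k-\ell}$ and $\Phi^{-\ell}$, produced by the endpoint mismatch, and it has to be derived, not appended. (For \ref{IRA+} the heuristic accidentally gives the right answer on the $\Phideg$ block because $\beta_\pm(\Phideg^{k+\ell})=\beta_\pm(\Phideg^\ell)$; for \ref{IRA-} it does not.)

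There is a second, independent gap. Your $\eps\leq\eps_0$ no-crossing argument only bites for elliptic exponents with $\ell\lambda_q\notin\Z$. When $\ell\lambda_q\in\Z$ --- a root-of-unity eigenvalue whose degree divides $\ell$ --- the endpoint of the relevant block of $\Phi_e^\ell$ literally has eigenvalue $1$, and $\Phi_e^{k+\ell}$, $\Phi_e^\ell$ share that endpoint exactly (since $D\mid k$ forces $k\lambda_q\in\Z$). Equality of endpoints does not give equality of indices: $\mu_\pm$ depends on the homotopy class of the path, and the statement you need is a non-obvious identity between two distinct paths with the same endpoint. This is precisely \eqref{eq:mu-Phi1} in the paper's Subcase~2, which requires an isospectral deformation argument to a semisimple representative whose $k$th power is a genuine loop. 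You do not supply anything playing that role, and without it the argument is incomplete.

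One smaller slip: $\Psi_k$ does not have "arbitrarily small $\mu_\pm$"; its \emph{mean index} is small ($<\eta$), but $|\mu_\pm(\Psi_k)|$ can be of order $m$, coming from $\beta_\pm$ of the degenerate block and the near-unit elliptic eigenvalues. This is why the whole case analysis in the paper is needed: smallness of $\hmu$ does not control $\mu_\pm$ near the degenerate locus.
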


Before proving the lemma, we note that \eqref{eq:eps2} has a bounded
gap sequence of solutions $k_l\to\infty$ by Lemma \ref{lemma:gap} and
then, as we have already pointed out, the corresponding sequence
$d=d_l\to\infty$ has bounded gap automatically. For any $a:=a_{ij}>0$
to find a bounded gap sequence $C_l\to\infty$ meeting condition
\ref{IRAa}, we can just pick any $C_l\not\in a\N$ in the range
$(k_la, k_la+\eta)$ assuming that $\eta<a$. Furthermore, to make sure
that $k$ is divisible by $D$ or any fixed integer $N>0$, it suffices
to replace $\eps$ by $\eps/N$ in the definition of $U$. Then, after
applying Lemma \ref{lemma:gap} for this new neighborhood $U$, we
replace sequence of solutions $k$ by $Nk$ and have a bounded gap
sequence of solutions satisfying \eqref{eq:eps2} and divisible by
$N$. As the next step, to meet the requirement of \eqref {eq:d} we
have $d$ replaced by $Nd$, and hence also divisible by $N$, provided
that $\eps>0$ is sufficiently small.  Thus, to complete this step of
the proof, it remains to prove Lemma \ref{lemma:inequalities1}. The
proof has a substantial overlap with \cite[Sec.\ 5.2.1]{GG:LS} and is
implicitly contained in that section.

\begin{proof}[Proof of Lemma \ref{lemma:inequalities1}]
  We start with conditions \ref{IRA0}--\ref{IRA-}, which we restate
  now for the reader's convenience:
\begin{itemize}
\item[\reflb{IRA0'}{\rm{(IR1'')}}]
  $\big|\hmu\big(\Phi^{k}\big)-d\big|<\eta$ and
  $$
  d-m
  \leq \mu_-\big(\Phi^{k}\big)
  \leq \mu_+\big(\Phi^{k}\big)
  \leq d+ m;
  $$
\item[\reflb{IRA+'}{\rm{(IR2'')}}]
  $\mu_\pm\big(\Phi^{k+\ell}\big)= d +
  \mu_\pm\big(\Phi^\ell\big)$  when $0<\ell\leq \ell_0$;

\item[\reflb{IRA-'}{\rm{(IR3'')}}]
  $\mu_+\big(\Phi^{k-\ell}\big)= d-
  \mu_-\big(\Phi^\ell\big)+\big(\beta_+\big(\Phi^\ell\big)-
  \beta_-\big(\Phi^\ell\big)\big)$, where $0<\ell\leq \ell_0<k$ and
  $\beta_+-\beta_-=b_+-b_-\leq m$ with $\beta_\pm=0$ when $\Phi^\ell$
  is non-degenerate.
\end{itemize}
In spite of a minor change of notations, these are exactly conditions
\ref{IRA0}--\ref{IRA-} stated for one map $\Phi=\Phi_{ij}$ with
$k=k_{ijl}$, etc.

Note that concatenation with the loop $\varphi$ simply results in the
shift of $d$ and the indices in each of these formulas by
$k\hmu(\varphi)\in\Z$. Hence, without loss of generality, we can
assume that $\varphi=\id$. Likewise, the effect of $\Phi_h$ is the
shift by $k\hmu(\Phi_h)$ and again we can assume that $\Phi_h$ is
trivial, i.e., $\Phi=\Phideg\oplus\Phi_e$.

To prove the first part of \ref{IRA0'}, observe that by
\eqref{eq:hmu-elliptic} and homogeneity of the mean index we have
\begin{equation}
\label{eq:hmu-Phik}
\hmu(\Phi^k) =2k\sum_q\lambda_q,
\end{equation}
since $\hmu(\Phideg)=0$.  By \eqref{eq:eps1} for $\ell=1$,
\eqref{eq:pick-eps} and \eqref{eq:eps2}, we have
\begin{equation}
  \label{eq:dj}
  d =\sum_q \sign(\lambda_q)\big[2k|\lambda_q|\big],
\end{equation}
where $[\,\cdot\,]$ stands for the nearest integer. Then
\begin{equation}
  \label{eq:hmu-dj}
\big|d-\hmu\big(\Phi^{k}\big)\big|< 2m'\eps<\eta,
\end{equation}
again by \eqref{eq:eps2}. This also shows that $d$ is
unambiguously defined.

To prove the second part of \ref{IRA0'}, we combine \eqref{eq:mu-del}
and \eqref{eq:hmu-dj} and use the triangle inequality. Thus we have
$$
\big|d-\mu_\pm\big(\Phi^{k}\big)\big|\leq\big|d-
\hmu\big(\Phi^k\big)\big|
+\big|\mu_\pm\big(\Phi^{k}\big)-\hmu\big(\Phi^k\big)\big| \leq
\eta+m'.
$$
Then, since the left-hand side is an integer,
$$
\big|d- \mu_\pm(\Phi^{k})\big|\leq m'\leq m,
$$
establishing the second inequality in \ref{IRA0'}.

Let us now turn to the proof of \ref{IRA+'} and \ref{IRA-'}. The fact
that
$$
\beta_+(\Phi)-\beta_-(\Phi)=b_+(\Phi)-b_-(\Phi)\leq m
$$
in \ref{IRA-'} follows directly from the definitions of these
invariants; cf.\ \eqref{eq:beta+-beta-}.

Next, observe that these formulas are additive in $\Phi$ in the
following sense. Assume that $\Phi=\Phi'\oplus\Phi''$, i.e.,
$$
\Phi(1)=\Phi'(1)\oplus\Phi''(1) \textrm{ and }
\hmu(\Phi)=\hmu(\Phi')+\hmu(\Phi'').
$$
Then
$$
\mu_\pm\big(\Phi^k\big)=
\mu_\pm\big((\Phi')^k\big)+\mu_\pm\big((\Phi'')^k\big)
$$
and 
$$
\beta_\pm\big(\Phi^k\big)=\beta_\pm\big((\Phi')^k\big)+
\beta_\pm\big((\Phi'')^k\big)
$$
for all $k\in\N$ by \eqref{eq:add} and \eqref{eq:add2}. Moreover, when
$k$ satisfies \eqref{eq:eps2} for $\Phi$, it also satisfies this
condition for $\Phi'$ and $\Phi''$. Then setting, as in Lemma
\ref{lemma:inequalities1} and \eqref{eq:d}, $d\big((\Phi')^k\big)$ to
be the nearest integer to $\hmu\big((\Phi')^k\big)$ and
$d\big((\Phi'')^k\big)$ to be the nearest integer to
$\hmu\big((\Phi'')^k\big)$ and using self-explanatory notation, we
have
$$
d\big(\Phi^k\big)= d\big((\Phi')^k\big)+d\big((\Phi'')^k\big).
$$
It follows that \ref{IRA+'} and \ref{IRA-'} hold for $\Phi$ whenever
they hold for $\Phi'$ and $\Phi''$ assuming that $k$ satisfies
\eqref{eq:eps2} for $\Phi$.

We will use this observation to reduce the proof to several particular
cases. Throughout the argument we will assume without specifically
stating that $k\in\N$ and $\ell\in\N$ and $k>\ell$ in \ref{IRA-'}.

When $\Phi$ is totally degenerate and $\hmu(\Phi)=0$, i.e.,
$\Phi=\Phideg$, we have $d=0$ and
$\mu_\pm\big(\Phi^k\big)=\mu_\pm(\Phi)=\pm \beta_\pm(\Phi)$ for all
$k\in \N$ by \eqref{eq:bpm}. Then \ref{IRA+'} reduces to the identity
$$
\beta_\pm\big(\Phi^{k+\ell}\big) =\beta_\pm(\Phi)=
\beta_\pm\big(\Phi^{\ell}\big)
$$
and, in a similar fashion, \ref{IRA-'} turns into
$$
\beta_+\big(\Phi^{k-\ell}\big) = -\big(-\beta_-
\big(\Phi^\ell\big)\big) +
\big(\beta_+\big(\Phi^\ell\big)-\beta_-\big(\Phi^\ell\big)\big).
$$
Therefore, \ref{IRA+'} and \ref{IRA-'} hold again for all $k$ and
$\ell$.

The remaining case is when $\Phi=\Phi_e$ is elliptic. We break it down
into two subcases.

\emph{Subcase 1: $\Phi$ is elliptic and strongly non-degenerate.}
Then, by definition, no eigenvalue of $\Phi(1)$ is a root of unity and
$\Phi^k$ is non-degenerate for all $k\in\N$. As a consequence,
$$
\mu_-\big(\Phi^k\big)=\mu\big(\Phi^k\big)=\mu_+\big(\Phi^k\big).
$$
By Lemma \ref{lemma:isospectral}, without loss of generality, we may
assume that map $\Phi(1)$ is semisimple. Indeed, consider an
isospectral (i.e., with constant eigenvalues) family $\Psi_s$,
$s\in [0,1]$, connecting $\Phi(1)$ to a semisimple linear symplectic
map with the same eigenvalues as $\Phi(1)$. Let $\tPhi_s\in \TSp(2m)$
be obtained by concatenating $\Phi$ and $\Psi$. Then
$\mu\big(\tPhi_s^k\big)=\mu\big(\Phi^k\big)$ and
$\hmu\big(\tPhi_s^k\big)=\hmu\big(\Phi_s^k\big)$ for all $k$ and $s$,
and hence in the proof of the theorem we can replace $\Phi$ by
$\tPhi$. We keep the notation $\Phi$.

Then, since $\Phi$ is semisimple, it decomposes as a direct sum of $m$
``short'' rotations
\begin{equation}
  \label{eq:rot}
R_q(t):=\exp(2\pi\sqrt{-1}\lambda_q t)\colon \C\to \C,
\end{equation}
where $0<|\lambda_q|\leq 1/2$, $q=1,\ldots, m'$, and $t$ ranges from 0
to $1$. Since no eigenvalue is a root of unity all $\lambda_q$ are
irrational.  Furthermore, $\hmu(R_q)=2\lambda_q$ and
$\mu(R_q)=\sign(\lambda_q)$ and $\hmu(\Phi)=\sum\hmu(R_q)$. Therefore,
by additivity, we only need to establish \ref{IRA+'} and \ref{IRA-'}
for one rotation $R_q$ to complete the proof of this subcase. For the
sake of brevity, set $R:=R_q$ and $\lambda:=\lambda_q$. Then these
identities amount to
\begin{equation}
  \label{eq:IR-SP2}
  \mu\big(R^{k\pm\ell}\big)=
  \big[\hmu \big(R^{k}\big)\big]\pm \mu\big(R^{\ell}\big),
\end{equation}
where $1\leq\ell\leq\ell_0$ and
\begin{equation}
  \label{eq:k-eps-3}
  \|k\lambda\|< \min\{\|\lambda\ell\|\mid
  1\leq \ell \leq \ell_0\}<1/2,
\end{equation}
as a combination of \eqref{eq:eps1}, \eqref{eq:pick-eps} and
\eqref{eq:eps2}. By \eqref{eq:sp2},
\[
  \hmu\big(R^k\big)=2k\lambda \quad \textrm{and} \quad
  \mu\big(R^k\big)=\sign(\lambda) \big(2\lfloor k|\lambda|\rfloor
  +1\big).
\]
Thus, \eqref{eq:IR-SP2} turns into the identity
\[
\sign(\lambda) \big(2\lfloor (k\pm\ell)|\lambda|\rfloor
+1\big)=[2k\lambda]\pm \sign(\lambda) \big(2\lfloor
\ell|\lambda|\rfloor +1\big).
\]
It is routine to see that this condition is indeed satisfied whenever
\eqref{eq:k-eps-3} holds; see, e.g., \cite[Sec.\ 5.2.1]{GG:LS} for a
detailed argument. This finishes the proof of the first subcase.

\emph{Subcase 2: Roots of unity.} In this subcase all eigenvalues of
$\Phi$ are roots of unity. This is the only case where it matters that
$k$ is divisible by $D$, the common multiple of their degrees, which
we will assume from now on. Hence, $\Phi^k$ is totally degenerate and
$d=\hmu\big(\Phi^k\big)$. Fixing $\ell$, let us decompose $\Phi$ as
$\Phi_0\oplus\Phi_1$, where the degrees of the eigenvalues of
$\Phi_0(1)$ divide $\ell$ and the degrees of the eigenvalues of
$\Phi_1(1)$ do not. In other words, $\Phi_1^\ell$ and
$\Phi_1^{k\pm\ell}$ are non-degenerate, and $\Phi_0^\ell$ and
$\Phi_0^{k\pm\ell}$ are totally degenerate.

We claim that 
\begin{equation}
  \label{eq:mu-Phi1}
  \mu\big(\Phi_1^{k\pm\ell}\big)=
  \hmu\big(\Phi_1^{k}\big)\pm\mu\big(\Phi_1^{\ell}\big).
\end{equation}
In spite of its appearance, this statement is not entirely obvious and
requires a proof. An easy way to prove \eqref{eq:mu-Phi1} is as
follows. By Lemma \ref{lemma:isospectral}, there exists an isospectral
family $\Psi_s$, $s\in [0,1]$, of transformations with $\Psi_1=\Phi_1$
and $\Psi_0(1)$ semisimple. Then $\Psi_0^{k}(1)=\id$, i.e.,
$\Psi_0^{k}$ is a loop, and
$\hmu\big(\Phi_1^{k}\big)=\hmu\big(\Psi_0^{k}\big)$ because all
eigenvalues of $\Psi_s^{k}(1)$ are equal to 1 for all $s$.
Furthermore, $\Psi_s^{k\pm\ell}$ and $\Psi_s^{\ell}$ remain
non-degenerate for all $s$. As a consequence,
$\mu\big(\Phi_1^{\ell}\big)=\mu\big(\Psi_0^{\ell}\big)$ and
$\mu\big(\Phi_1^{k\pm\ell}\big)=\mu\big(\Psi_0^{k\pm\ell}\big)$. It
follows that \eqref{eq:mu-Phi1} holds if and only if it holds for
$\Psi_0$. The latter, however, is obvious:
$$
\mu\big(\Psi_0^{k\pm\ell}\big)
=\mu\big(\Psi_0^{k}\Psi_0^{\pm\ell}\big)
=\hmu\big(\Psi_0^{k}\big)\pm\mu\big(\Psi_0^{\ell}\big)
$$
since $\Psi_0^k$ is a loop. Let us note that one can derive
  both \eqref{eq:IR-SP2} and \eqref{eq:mu-Phi1} directly from
  \cite{Lo} (see Thm. 3.1 and Cor. 3.1 in Chap. 9) bypassing Lemma
  \ref{lemma:isospectral}.

Let $q$ be the least common multiple of the degrees of eigenvalues of
$\Phi_0$. Then $\Phi_0^q$ is totally degenerate. Furthermore, $q$
divides both $k$ and $\ell$, and
$\Phi_0^\ell=\big(\Phi_0^q\big)^{\ell/q}$ and
$\Phi_0^{k+\ell}=\big(\Phi_0^q\big)^{(k+\ell)/q}$ are admissible
iterates of $\Phi_0^q$.  Hence
$$
\beta_\pm\big(\Phi_0^{k+\ell}\big)=\beta_\pm(\Phi_0^q)
=\beta_\pm\big(\Phi_0^\ell\big)
$$
by \eqref{eq:beta-it} applied to $\Phi_0^q$. Therefore, using
\eqref{eq:mu-Phi1} in the third line below, we have
\begin{equation*}
  \begin{split}
    \mu_\pm\big(\Phi^{k+\ell}\big) &=
    \mu_\pm\big(\Phi_1^{k+\ell}\big)+\mu_\pm\big(\Phi_0^{k+\ell}\big) \\
    &= \mu\big(\Phi_1^{k+\ell}\big)+
    \hmu\big(\Phi_0^{k+\ell}\big)\pm
    \beta_\pm\big(\Phi_0^{k+\ell}\big) \\
    &= \hmu\big(\Phi_1^{k}\big)+\mu\big(\Phi_1^{\ell}\big)+
    \hmu\big(\Phi_0^{k+\ell}\big)\pm
    \beta_\pm(\Phi_0^{\ell}) \\
    &= \big(\hmu\big(\Phi_1^{k}\big)+\hmu\big(\Phi_0^{k}\big)\big)
    +\mu\big(\Phi_1^{\ell}\big)+\big(\hmu\big(\Phi_0^{\ell}\big)\pm
    \beta_\pm\big(\Phi_0^\ell\big)\big) \\
    &= d+ \mu_\pm\big(\Phi^\ell\big),
\end{split}
\end{equation*}
proving \ref{IRA+'}. Condition \ref{IRA-'} is established by a similar
calculation:
\begin{equation*}
  \begin{split}
    \mu_+\big(\Phi^{k-\ell}\big) &=
    \mu_+\big(\Phi_1^{k-\ell}\big)+\mu_+\big(\Phi_0^{k-\ell}\big) \\
    &= \hmu\big(\Phi_1^{k}\big)-\mu\big(\Phi_1^{\ell}\big)+
    \hmu\big(\Phi_0^{k-\ell}\big)+
    \beta_+\big(\Phi_0^{k-\ell}\big) \\
    &= \big(\hmu\big(\Phi_1^{k}\big)+\hmu\big(\Phi_0^{k}\big)\big)
    -\mu\big(\Phi_1^{\ell}\big)-\big(\hmu\big(\Phi_0^{\ell}\big)-
    \beta_-\big(\Phi_0^\ell\big)\big)\\
    &\quad +\big(\beta_+\big(\Phi_0^\ell\big) -
    \beta_-\big(\Phi_0^\ell\big)\big)\\
    &= d- \mu_-\big(\Phi^\ell\big)+
    \big(\beta_+\big(\Phi^\ell\big) -\beta_-\big(\Phi^\ell\big)\big).
\end{split}
\end{equation*}
In the last two lines we have used the fact that $\Phi_1^\ell$ is
non-degenerate, and hence we can replace $\Phi_0^\ell$ by $\Phi^\ell$
when calculating $\beta_\pm$. This completes the proof of Subcase 2.

To finish the proof of the lemma, it remains to verify condition
\ref{IRAsummand} asserting in this case that \ref{IRA0'}, \ref{IRA+'}
and \ref{IRA-'} with the same $k$ and $d$ continue to hold with $\Phi$
replaced by its non-degenerate part $\Psi$. To this end, we note that
the elliptic eigenvalues of $\Psi(1)$ still satisfy
\eqref{eq:eps2}. Thus, since $\hmu(\Psi)=\hmu(\Phi)$, using the
self-explanatory notation as above, we have
$$
d\big(\Phi^k\big)= d\big(\Psi^k\big).
$$
Then a simple examination of the rest of the proof shows that
\ref{IRA0'}--\ref{IRA-'} still hold for $\Psi$. This completes the
proof of Lemma \ref{lemma:inequalities1}.
\end{proof}

\subsubsection{The general case: $r\geq 1$.}
\label{sec:CGT-pf-gen} 
Let $(\Phi_{ij}, a_{ij})$ be as in the statement of the theorem. We
denote the total number of maps $\Phi_{ij}$ by $r$. As we have already
pointed out, if we apply the argument from Section
\ref{sec:IRT-pf-r=1} to each of these maps individually, we obtain
integer sequences $l\mapsto k_{ijl}$ and integer sequences $d_{ijl}$
such that conditions \ref{IRA0}--\ref{IRAa} are satisfied with
$d_{ijl}$ depending on $j$ and $C_{ijl}$ depending on $i$ and
$j$. Thus our goal in this section is to modify the process to remove
this dependence.  This is where Lemma \ref{lemma:Mink} enters the
proof.

As in Section \ref{sec:IRT-pf-r=1}, denote the elliptic eigenvalues of
$\Phi_{ij}$ by $\exp(\pm 2\pi\sqrt{-1}\lambda_{ijq})$, where
$0<|\lambda_{ijq}|\leq 1/2$ and $q=1,\ldots, m_{ij}'$, and for each
pair of complex conjugate eigenvalues we pick the sign of
$\lambda_{ijq}$ so that the resulting eigenvalue is of the first Krein
type; see \cite{SZ}. Set $\Delta_{ij}:=\hmu(\Phi_{ij})$ and let $D$ be
the least common multiple of the degrees of the roots of unity among
the eigenvalues of all $\Phi_{ij}$.

Fix $\eps>0$ and $\sigma>0$, which we will eventually need to make
sufficiently small, and consider the system of inequalities
\begin{equation}
\label{eq:k}
\begin{aligned}
  \|k_{ij}\lambda_{ijq}\|& <\eps
  \quad\textrm{ for all $i$, $j$ and $q$,}\\
  |k_{11}a_{11}-k_{ij}a_{ij}| &< \sigma
  \quad\textrm{ for $i+j\geq 2$},
    \end{aligned}
\end{equation}
where we treat the integer vector $\vec{k}\in\Z^r$ with components
$k_{ij}$ as a variable. Introducing additional integer variables
$c_{ijq}$, we can rewrite the first group of inequalities in the form
\begin{equation}
  \label{eq:ciq}
|k_{ij}\lambda_{ijq}-c_{ijq}|<\eps.
\end{equation}
With this in mind, system \eqref{eq:k} has one fewer equation than the
number of variables.

This reformulation puts the question within the framework of Lemma
\ref{lemma:Mink}. Namely, thinking for a moment of $k_{ij}$ and
$c_{ijq}$ as real coordinates on $\R^n$ with $n=r+\sum m'_{ij}$, we
can view \eqref{eq:k} as a system in the form of \eqref{eq:system},
where $f_s=k_{11}a_{11}-k_{ij}a_{ij}$ for $s=1,\ldots, r-1$ and the
remaining linear maps are $k_{ij}\lambda_{ijq}-c_{ijq}$. By Lemma
\ref{lemma:Mink}, there exists a bounded gap sequence $K_l\to\infty$
of non-zero solutions of \eqref{eq:k} in $\Z^n$. Projecting to the
first $r$-coordinates, we obtain a bounded gap sequence
$\vec{k}_l\in\Z^r$ solving \eqref{eq:k}.  Moreover, we can make all
$k_{ijl}$ divisible by any fixed integer $N$ as in the lemma. In
particular, we can have all $k_{ijl}$ divisible by $D$.

Our next goal is to ensure that $k_{ijl}\to\infty$ as $l\to\infty$ for
all $i$ and $j$. To this end, we need to require that $\eps<1$ and
$\sigma<\min a_{ij}$. (Recall that by the assumptions of the theorem
$a_{ij}>0$ for all $i$ and $j$.) First, note that the sequence
$\vec{k}_l$ is necessarily unbounded; for otherwise the entire
sequence $K_l$ would be bounded due to \eqref{eq:ciq}. Furthermore,
$\vec{k}_l\neq 0$ since otherwise we would have $K_l=0$ by again
\eqref{eq:ciq}. (Here we use the fact that $0<\eps<1$.) In particular,
for every $l$ at least one of the integers $k_{ijl}\neq 0$.  Then by
the second inequality in \eqref{eq:k}, for a fixed $l$, all $k_{ijl}$
are non-zero and have the same sign when $\sigma<\min
a_{ij}$. Therefore, replacing $\vec{k}_l$ by $-\vec{k}_l$ when
necessary, we can guarantee that $k_{ijl}>0$ for all $i$, $j$ and
$l$. Clearly, after this sign change the sequence still has bounded
gap. Finally, since the norm of $\vec{k}_l$ goes to infinity, we must
have $k_{ijl}\to\infty$ for all $i$ and $j$ again by the second
inequality in \eqref{eq:k}.

It follows that when $\eps>0$ is sufficiently small the requirements
of Lemma \ref{lemma:inequalities1} are met for each $\Phi_{ij}$, and
hence conditions \ref{IRA0}--\ref{IRAsummand} hold with
$$
d_{ijl}:=[ k_{ijl}\Delta_{ij}].
$$
More specifically, fix $\ell_0$ and $\eta>0$ which we assume to be
sufficiently small (e.g., $\eta<1/4$) and similarly to
\eqref{eq:eps1}, set
$$
\eps_0=
\min_{i,j,q}\{\|\lambda_{ijq}\ell\|\mid
\lambda_{ijq}\ell\not\in\Z, 1\leq|\ell|\leq\ell_0\} >0.
$$
Furthermore, we let $\eps>0$ be so small that again
$$
\eps\leq\eps_0\quad\text{and}\quad 2\eps\max_{i,j} m'_{ij} <\eta
$$
as in \eqref{eq:pick-eps}. Then \eqref{eq:eps2} is satisfied for each
$\Phi_{ij}$.

Next, we claim that $d_{ijl}$ is in fact independent of $j$ when
$\sigma>0$ is sufficiently small. To see this, let us focus on one
particular cluster, i.e., fixing $i$ we have the ratio
$\rho_i=\Delta_{ij}/a_{ij}>0$ independent of $j$. First, note that as
a consequence of the second condition in \eqref{eq:k},
$$
|k_{ij'l}a_{ij'}-k_{ijl}a_{ij}| < 2\sigma.
$$
Therefore, multiplying this inequality by $\rho_i>0$, we have
$$
|k_{ij'l}\Delta_{ij'}-k_{ijl}\Delta_{ij}| < 2\sigma\rho_i.
$$
It follows that $d_{il}:=d_{ijl}$ is independent of $j$ for all $i$
when $\sigma\max_i\rho_i<1/8$.

Turning to \ref{IRAa},  we can pick any 
$$
C_l\in(\max_{ij} k_{ijl}a_{ij}, \, \min_{ij} k_{ijl}a_{ij}+\eta)
$$
lying outside the spectrum (i.e., the union of the sequences
$a_{ij}\N$) assuming again that $\sigma$ is so small that
$2\sigma<\eta$ and $\eta<\min a_{ij}$.

We have already shown that the sequences $k_{ijl}$ can be made
divisible by any given integer $N$. This, however, does not
automatically guarantee that $d_{il}$ is divisible by $N$. To achieve
this, we argue exactly as in the case of one map. Namely, we first
replace the upper bounds $\eps$ and $\sigma$ in \eqref{eq:k} by
$\eps/N$ and $\sigma/N$ and then multiply the resulting sequences
$k_{ijl}$ by $N$. (This also makes $k_{ijl}$ divisible by $N$,
although we have already guaranteed this by other means.) Then the
sequence $d_{il}$ gets multiplied by~$N$.

\end{document}